\newtheorem{theorem}{Theorem}
\newtheorem{lemma}[theorem]{Lemma}
\newtheorem{corollary}[theorem]{Corollary}
\newtheorem{proposition}[theorem]{Proposition}
\newtheorem{remark}[theorem]{Remark}
\newtheorem{conjecture}{Conjecture}
\newtheorem{quest}{Question}
\DeclareMathOperator{\im}{im}
\begin{document}

\title{Clique immersion in graph products}
\author[Collins]{Karen L. Collins}
\address[Karen L. Collins]{Department of Mathematics and Computer Science, Wesleyan University, Middletown, CT, USA 06459} \email[Karen L. Collins]{kcollins@wesleyan.edu}

\author[Heenehan]{Megan E. Heenehan}
\address[Megan E. Heenehan]{Department of Mathematical Sciences, Eastern Connecticut State University, Willimantic, CT, USA 06226} \email[Megan Heenehan]{heenehanm@easternct.edu}

\author[McDonald]{Jessica McDonald} \address[Jessica McDonald]{Department of Mathematics and Statistics, Auburn University, Auburn, AL, USA 36849}\email[Jessica McDonald]{mcdonald@auburn.edu}

\thanks{The third author is supported in part by NSF grant DMS-1600551}

\date{}

\maketitle

\begin{abstract}  Let $G,H$ be graphs and $G*H$ represent a particular graph product of $G$ and $H$. We define $\im(G)$ to be the largest $t$ such that $G$ has a $K_t$-immersion and ask: given $\im(G)=t$ and $\im(H)=r$, how large is $\im(G*H)$? Best possible lower bounds are provided when $*$ is the Cartesian or lexicographic product, and a conjecture is offered for each of the direct and strong products, along with some partial results.
\end{abstract}

\section{Introduction}

In this paper every graph is assumed to be simple.

Formally, a pair of adjacent edges $uv$ and $vw$ in a graph are \emph{split off} (or \emph{lifted}) from their common vertex $v$ by deleting the edges $uv$ and $vw$, and adding the edge $uw$. Given graphs $G, G'$, we say that $G$ has a \emph{$G'$-immersion} if a graph isomorphic to $G'$ can be obtained from a subgraph of $G$ by splitting off pairs of edges, and removing isolated vertices. We define the \emph{immersion number} of a graph $G$, denoted $\im(G)$, to be the largest value $t$ for which $G$ has an $K_t$-immersion. We call the $t$ vertices corresponding to those in the $K_t$-immersion the \emph{terminals} of the immersion.

Immersions have enjoyed increased interest in the last number of years (see eg. \cite{CDKS, CH, DDFMMS, DW, DY, KK, V17, Wa, Wo}). A major factor in this was Robertson and Seymour's \cite{RS23} proof that graphs are well-quasi-ordered by immersion, published as part of their celebrated graph minors project (where they show that graphs are well-quasi-ordered by minors). Although graph minors and graph immersions are incomparable, it is interesting to ask the same questions about both. In the realm of minors, motivated by Hadwiger's conjecture~\cite{hadwiger}, authors have asked: what is the largest complete graph that is a minor of a given graph? In this paper we ask: what is the largest complete graph that is immersed in a given graph? Similar questions were also asked about subdivisions after Haj\'os~\cite{Haj} conjectured that a graph with chromatic number $n$ would have a subdivision of a $K_n$. However, this conjecture is false for $n\geq 7$ \cite{C}. Since every subdivision is an immersion, but not every immersion is a subdivision, we examine whether or not the counterexamples provided by Catlin in \cite{C} have immersion numbers of interest.

In this paper, we are interested in the immersion number of graph products. In particular, for graphs $G$ and $H$, we consider the four standard graph products: the \emph{lexicographic product} $G\circ H$, the \emph{Cartesian product} $G\Box H$, the \emph{direct product} $G\times H$, and the \emph{strong product} $G \boxtimes H$.  The central question of this paper is the following.

 \begin{quest}\label{ques}
 Let $G$ and $H$ be graphs with $im(G)=t$ and $im(H)=r$. For each of the four standard graph products, $G*H$, is $\im(G*H)\geq\im(K_t*K_r)$?
 \end{quest}

In this paper we determine that the answer to Question~\ref{ques} is yes for the lexicographic and Cartesian products. In addition we provide partial results for the direct product and conjecture that the answer is yes for the direct and strong products. In determining the immersion number for $K_t*K_r$, for any product *, we choose as our terminals a vertex and all of its neighbors. In trying to affirmatively answer Question~\ref{ques}, we use a similar strategy for choosing terminals.

We will now summarize the results in each section of the paper. In Section~\ref{prelims}, we describe necessary background, including an alternate definition of graph immersion that we use throughout the rest of the paper, and explain our strategy for choosing terminals in a graph product. In Section~\ref{Lexicographic}, we discuss the lexicographic product and affirmatively resolve Question~\ref{ques} for the lexicographic product of two or more graphs (Theorem~\ref{lex}). There is an appealing immersion-analog of the Haj\'os Conjecture \cite{Haj} by Abu-Khzam and Langston \cite{AKL}, namely, that $\chi(G)\geq t$  implies $\im(G)\geq t$. While the  Haj\'os Conjecture  was disproved by Catlin \cite{C} using lexicographic products as counterexamples, in Section~\ref{lex}, we show
that the lexicographic product does not yield smallest counterexamples to the Abu-Khzam and Langston conjecture.

In Section~\ref{Cartesian}, we discuss the Cartesian product. In Section~\ref{CartBound}, we  affirmatively resolve Question~\ref{ques} for the Cartesian product of two graphs (Theorem~\ref{box}) and provide a contrasting example of an immersion number greater than that given in the theorem. In Section~\ref{CartPowers}, we extend our results to products with more than two factors. In particular, we show the immersion number of the $d$ dimensional hypercube, $Q_d$, is $d+1$ and the immersion number of the Hamming graph, $K_n^d$, is $d(n-1)+1$. For the Cartesian product of a path on $n$ vertices with itself $d$ times, denoted $P_n^d$, we show $\im(P_n^d)=2d+1$. We compare the results for hypercubes, Hamming graphs, and $P_n^d$ to results by Kotlov~\cite{Ko01}  and  Chandran and Sivadasan~\cite{ChSi07} for graph minors. In addition, we show we can do better than the bound of Theorem~\ref{box}  by proving $\im(G\Box P_n)\geq t+2$ when $\im(G) =t$.

In Section~\ref{Direct}, we conjecture that the answer to Question~\ref{ques} is yes for the direct product of two graphs and provide partial results towards the proof of this conjecture (Conjecture~\ref{direct}). In Section~\ref{DirectComplete}, we find the immersion number of $K_t\times K_r$ (Theorem~\ref{KtTimesKr}) and thus that the conjecture holds when $G$ and $H$ contain $K_t$ and $K_r$ as subgraphs, respectively (Corollary~\ref{CompleteSubgraphTimes}). In addition, we prove that the conjecture holds when $r\geq 3$ and  $K_r$ is a subgraph of $H$ (Theorem~\ref{GtimesKr}). In Section~\ref{PegParity}, we extend these results to $G$ and $H$ having immersions in which all of the paths have the same parity (Theorem~\ref{DirectParity} and Theorem~\ref{GTimesParity}). In Section~\ref{DirectEx}, we provide some examples. In Section~\ref{limitations}, we discuss the cases that remain to prove the conjecture and the limitations of our current proof techniques.

Finally in Section~\ref{final}, we end with some concluding remarks and a conjecture about the strong product.

\section{Preliminaries}\label{prelims}
In this paper all graphs are finite and simple. For graph products we follow the notation of \cite{HIK} and \cite{ImKl00}.

One definition of immersion was provided in the Introduction; an alternative definition for graph immersion is as follows. Given graphs $G$ and $G'$, $G$ has a $G'$-immersion if there is an injective function $\phi: V(G')\to V(G)$ such that for each edge $uv\in E(G')$, there is a path in $G$ joining vertices $\phi(u)$ and $\phi(v)$, and these paths are edge-disjoint for all $uv\in E(G')$. We denote the path from $u$ to $v$ in $G$ by $P_{u,v}$. In this context we call the vertices of $\{\phi(v): v\in V(G)\}$ the \emph{terminals} (or \emph{corners}) of the $G'$-immersion, and we call internal vertices of the paths $\{P_{u,v}: uv\in E(G)\}$ the \emph{pegs} of the $G'$-immersion. In this paper we will often refer to the terminals and pegs of an immersion.

For the reader's convenience, we begin by providing a definition of each of the four standard graph products. Each graph product is defined to have vertex set $V(G)\times V(H)$. Two vertices $(g, h)$ and $(g', h')$ are defined to be adjacent if: $gg'\in E(G)$ or $g=g'$ and $hh' \in E(H)$ (lexicographic product); $g=g'$ and $hh' \in E(H)$, or $gg' \in E(G)$ and $h=h'$ (Cartesian product); $gg' \in E(G)$ and $hh' \in E(H)$ (direct product). The edge set of the strong product is defined to be $E(G \Box H) \cup E(G \times H)$.

In order to contain a $K_n$-immersion, a graph must not only have at least $n$ vertices, but it must have  at least $n$ vertices whose degree is at least $n-1$. In particular, this gives the following observation.

\begin{remark}\label{Delta} For every graph $G$, $\im(G) \leq \Delta(G)+1$.
\end{remark}

Given Remark \ref{Delta} we make the following proposition for a bound on the immersion number of each product.

\begin{proposition}\label{MaxProducts} Given two graphs $G$ and $H$ where $n$ is the number of vertices in $H$,
\begin{enumerate}
  \item $\im(G\circ H)\leq\Delta(H)+n\Delta(G)+1$,
  \item $\im(G\Box H) \leq \Delta(G) + \Delta(H) +1$,
  \item $\im(G\times H) \leq \Delta(G)\Delta(H) +1, and$
  \item $\im(G\boxtimes H) \leq \Delta(G)\Delta(H) +\Delta(G) + \Delta(H)+1$.
\end{enumerate}
\end{proposition}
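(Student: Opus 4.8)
The plan is to reduce all four inequalities to Remark~\ref{Delta}, which asserts $\im(F)\le\Delta(F)+1$ for every graph $F$. So the whole argument amounts to writing down $\Delta$ of each of the four products in terms of $\Delta(G)$ and $\Delta(H)$. Since each product has vertex set $V(G)\times V(H)$, I would fix an arbitrary vertex $(g,h)$, count its neighbors in terms of $\deg_G(g)$ and $\deg_H(h)$, and then bound $\deg_G(g)\le\Delta(G)$ and $\deg_H(h)\le\Delta(H)$.

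For the lexicographic product $G\circ H$, the neighbors of $(g,h)$ are the pairs $(g',h')$ with $gg'\in E(G)$ (any choice of $h'$, giving $n\deg_G(g)$ of them, where $n=|V(H)|$) together with the pairs $(g,h')$ with $hh'\in E(H)$ (giving $\deg_H(h)$ more), so $\deg_{G\circ H}(g,h)=n\deg_G(g)+\deg_H(h)$, which yields (1). For the Cartesian product only one coordinate may change along an edge, so $\deg_{G\Box H}(g,h)=\deg_G(g)+\deg_H(h)$, which yields (2). For the direct product an edge is formed by choosing a $G$-neighbor of $g$ and, independently, an $H$-neighbor of $h$, so $\deg_{G\times H}(g,h)=\deg_G(g)\deg_H(h)$, which yields (3). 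For the strong product, since $E(G\boxtimes H)=E(G\Box H)\cup E(G\times H)$ and these two edge sets are disjoint (Cartesian edges join vertices agreeing in one coordinate, direct edges join vertices differing in both), the two counts simply add: $\deg_{G\boxtimes H}(g,h)=\deg_G(g)+\deg_H(h)+\deg_G(g)\deg_H(h)$, which yields (4). In each case I would finish by noting that the relevant expression is monotone nondecreasing in both $\deg_G(g)$ and $\deg_H(h)$, hence maximized by a vertex $(g,h)$ with $\deg_G(g)=\Delta(G)$ and $\deg_H(h)=\Delta(H)$ (such a pair exists), and then apply Remark~\ref{Delta}.

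There is no genuine obstacle here: the proposition is an immediate corollary of Remark~\ref{Delta} together with the elementary degree formulas for the four standard products. The only point meriting a word of care is confirming that the two coordinate-degree maxima can be attained simultaneously and that, for the strong product, the contributing edge sets are disjoint so the degrees genuinely add rather than merely bound from above; both are immediate from the definitions recalled in Section~\ref{prelims}.
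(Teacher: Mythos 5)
Your proposal is correct and follows essentially the same route as the paper: compute $\Delta$ of each product from the coordinatewise degree formulas and apply Remark~\ref{Delta}. You simply spell out the degree counts and the simultaneous attainability of the maxima in more detail than the paper does.
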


\begin{proof} Let $G$ and $H$ be graphs with maximum degrees $\Delta(G)$ and $\Delta(H)$ respectively such that $H$ has $n$ vertices.

\underline{Case 1:} By the definition of the lexicographic product, $\Delta(G\circ H) = \Delta(H) + n\Delta(G)$. Therefore, by Remark \ref{Delta}, $\im(G\circ H) \leq \Delta(H) + n\Delta(G) +1$.

\underline{Case 2:} By the definition of the Cartesian product, $\Delta(G\Box H) = \Delta(G) + \Delta(H)$. Therefore, by Remark \ref{Delta}, $\im(G\Box H) \leq \Delta(G) + \Delta(H) +1$.

\underline{Case 3:} By definition of the direct product $\Delta(G \times H) = \Delta(G)\Delta(H)$. Therefore, by Remark \ref{Delta}, $\im(G\times H) \leq \Delta(G)\Delta(H) +1$.

\underline{Case 4:} By definition of the strong product $\Delta(G \boxtimes H) = \Delta(G)\Delta(H)+\Delta(G)+\Delta(H)$. Therefore, by Remark \ref{Delta}, $\im(G\boxtimes H) \leq \Delta(G)\Delta(H) +\Delta(G)+\Delta(H)+1$.
\end{proof}

In trying to affirmatively answer Question~\ref{ques}, we use the same general strategy for each product. We consider graphs $G$ and $H$ with $\im(G) = t$ and $\im(H)=r$. We look at a $K_t$-immersion in $G$ with terminals $v_1, v_2, \ldots, v_t$ and a $K_r$-immersion in $H$ with terminals $u_1, u_2\ldots u_r$. We then consider $K_t\times K_r$ with the vertices labeled $v_1, v_2, \ldots, v_t$ in $K_t$ and $u_1, u_2\ldots u_r$ in $K_r$. As terminals for our immersion in $G*H$ we take a vertex from $K_t\times K_r$ (usually $(v_1, u_1)$) and all of its neighbors -- these are vertices in $G*H$ since each $v_i\in V(G)$ and each $u_j\in V(H)$. We are then able to use the $K_t$-immersion in $G$ and the $K_r$-immersion in $H$ along with the structure of the specific product to find paths in $G*H$ connecting these terminals.

We begin with a discussion of the lexicographic product.

\section{Lexicographic Products}\label{Lexicographic}
The lexicographic product is of particular interest because Catlin \cite{C} disproved the Haj\'os Conjecture \cite{Haj},  that is, if $\chi(G) = n$, then $G$ contains a subdivision of $K_n$, using lexicographic products of odd cycles and complete graphs. Every subdivision is an immersion, but not every immersion is a subdivision. Abu-Khzam and Langston conjectured that if $\chi(G) = n$, then $G$ contains an immersion of $K_n$ \cite{AKL}. Theorem~\ref{lex} implies that a lexicographic product is never a smallest counterexample to the Abu-Khzam and Langston conjecture, since if $G$ and $H$ satisfy the conjecture, then $G\circ H$ also satisfies the conjecture.

In the following theorem we prove a lower bound for the immersion number of the lexicographic product of two graphs.

\begin{theorem}\label{lex}
 Let $G$ and $H$ be graphs with $im(G)=t$ and $im(H)=r$. Then $im(G\circ H) \geq tr$.
\end{theorem}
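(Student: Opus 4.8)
The plan is to exhibit an explicit $K_{tr}$-immersion in $G\circ H$ using the terminals suggested by the paper's general strategy. Fix a $K_t$-immersion in $G$ with terminals $v_1,\dots,v_t$ and edge-disjoint connecting paths $P_{v_i,v_j}$, and a $K_r$-immersion in $H$ with terminals $u_1,\dots,u_r$ and edge-disjoint connecting paths $Q_{u_k,u_\ell}$. The natural choice of terminals in $G\circ H$ is the set $T=\{(v_i,u_k):1\le i\le t,\ 1\le k\le r\}$, which has size $tr$. I would first recall the key structural feature of the lexicographic product: for a fixed $g\in V(G)$, the copy $\{g\}\times V(H)$ induces a graph isomorphic to $H$; and whenever $gg'\in E(G)$, \emph{every} vertex of $\{g\}\times V(H)$ is adjacent to \emph{every} vertex of $\{g'\}\times V(H)$ (a complete bipartite ``rung'' between the two copies).

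The main work is to produce $\binom{tr}{2}$ pairwise edge-disjoint paths in $G\circ H$, one joining each pair of terminals. I would split the pairs into two types. First, pairs $(v_i,u_k),(v_i,u_\ell)$ lying in the same $G$-fibre ($k\neq\ell$): connect them using the path $Q_{u_k,u_\ell}$ copied inside $\{v_i\}\times V(H)$. Since the $Q$'s are edge-disjoint in $H$ and distinct fibres share no edges, these paths are pairwise edge-disjoint across all $i$. Second, pairs $(v_i,u_k),(v_j,u_\ell)$ with $i\neq j$ (with $k=\ell$ or $k\neq\ell$ both allowed): route these ``through'' the $G$-immersion path $P_{v_i,v_j}=v_i\,w_1\,w_2\cdots w_m\,v_j$. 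The idea is to lift $P_{v_i,v_j}$ to $G\circ H$ by choosing, for each internal peg $w_s$, a vertex $(w_s, y_s)$ in its fibre, giving a path $(v_i,u_k)\,(w_1,y_1)\cdots(w_m,y_m)\,(v_j,u_\ell)$ — this is a valid path because consecutive $G$-coordinates are adjacent in $G$ (or the two endpoints at the $v_i$, $v_j$ ends connect into the complete bipartite rungs). The crucial trick is that each edge $w_sw_{s+1}$ of $P_{v_i,v_j}$ corresponds to an entire $K_{|V(H)|,|V(H)|}$ worth of edges in $G\circ H$, so the many terminal-pairs that must use the \emph{same} $G$-path $P_{v_i,v_j}$ can be assigned \emph{distinct} edges in each rung by choosing the intermediate $H$-coordinates $y_s$ to differ from pair to pair. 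Concretely, one indexes the $r^2$ (or fewer) pairs that route along a fixed $P_{v_i,v_j}$ and assigns each a distinct ``colour'' $y\in V(H)$-valued pattern so the rungs are used disjointly; one must also check that at the $v_i$-end and $v_j$-end the edges used inside the fibres $\{v_i\}\times V(H)$ and $\{v_j\}\times V(H)$ (there are none if we step straight out to the first peg) do not collide with the Type-1 paths — stepping directly from $(v_i,u_k)$ into the adjacent fibre avoids using any edge of the $H$-copy at $v_i$, so there is no conflict.

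The main obstacle, then, is the bookkeeping for Type-2 paths when $m$, the length of $P_{v_i,v_j}$, is small (in particular $m=0$, i.e.\ $v_iv_j\in E(G)$): here the path $(v_i,u_k)(v_j,u_\ell)$ is a single edge, and \emph{all} $r^2$ terminal-pairs $(v_i,u_k),(v_j,u_\ell)$ want to use edges of the single complete bipartite rung between $\{v_i\}\times V(H)$ and $\{v_j\}\times V(H)$; fortunately that rung has $|V(H)|^2\ge r^2$ edges and the $r^2$ pairs $(u_k,u_\ell)$ name \emph{distinct} such edges, so they are automatically edge-disjoint. For $m\ge 1$ one must more carefully show that the pegs can be chosen so that, simultaneously, (a) different pairs sharing $P_{v_i,v_j}$ get disjoint rung-edges, and (b) pairs using different but \emph{edge-overlapping}... — actually the $G$-paths $P_{v_i,v_j}$ are already edge-disjoint in $G$, so rungs coming from different $P$'s are automatically disjoint in $G\circ H$; the only real constraint is within a single $P_{v_i,v_j}$, handled by the counting $|V(H)|^2\ge r^2$ above. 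Finally I would verify the total count: $t\binom r2$ Type-1 paths plus $\sum_{i<j} r^2$ Type-2 paths, and that every terminal pair is covered exactly once, completing the $K_{tr}$-immersion and hence $\im(G\circ H)\ge tr$. I expect the write-up's delicate point to be making the peg-assignment explicit and checking edge-disjointness cleanly, rather than any conceptual difficulty.
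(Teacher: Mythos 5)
Your proposal is correct and follows essentially the same route as the paper: the same $tr$ terminals, in-fibre paths supplied by the $K_r$-immersion in each terminal copy of $H$, and cross-fibre paths lifted along the edge-disjoint $G$-paths using distinct edges of the complete bipartite rungs. The one step you leave implicit --- consistently choosing the intermediate $H$-coordinates along a $G$-path with $m\ge 1$ pegs, where the per-rung count $|V(H)|^2\ge r^2$ alone is not quite a proof because each choice of $y_s$ commits two consecutive rungs at once --- is exactly what the paper's write-up handles by fixing a proper $r$-edge-colouring of $K_{r,r}$ on the interior rungs and sending the $r$ paths leaving terminal $z_i$ out along colour class $i$.
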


The following global definition of the lexicographic product will be helpful in our proof of the bound. For graphs $G$ and $H$, $G\circ H$ is obtained from a copy of $G$ by replacing each vertex in $G$ with a copy of $H$, and replacing each edge in $G$ with a complete bipartite graph. Note that the lexicographic product is not commutative, so the roles of $G$ and $H$ in this global definition cannot be reversed.

\begin{proof}(Theorem~\ref{lex})
Fix a $K_t$-immersion in $G$ and a $K_r$-immersion in $H$. Consider $G\circ H$ (with the global description given above). We use the $r$ terminals in the copies of $H$ that correspond to the $t$ terminals in $G$ as the terminal vertices of our $K_{tr}$-immersion.

Within each copy of $H$ there is a $K_r$-immersion, which we use to get the required paths between the $r$ terminals that we have chosen in $H$ (for our $K_{tr}$-immersion). Consider now, in $G\circ H$, two copies of $H$ that correspond to terminals $u$ and $w$ in the $K_t$-immersion in $G$. Let $H_v$ be the copy of $H$ that corresponding to a vertex $v$. Since there is a path between $u $ and $w$ in the $K_t$-immersion in $G$, $H_u$ is connected to $H_w$ by a sequence of copies of $H$, where each consecutive pair yields a $K_{r,r}$ between the two copies. There are two cases: (i) $u$ and $w$ are adjacent and (ii) $u$ and $w$ are not adjacent. In the case that $u$ and $w$ are adjacent, then each vertex in the set of $r$ terminals in $H_u$ is adjacent to each vertex in the set of $r$ terminals in $H_w$. In the case that $u$ and $w$ are not adjacent, let the path between $u$ and $w$ in $G$ be $u, v_1, v_2, \ldots, v_t, w$. It is well-known that $K_{r,r}$ has a proper  edge-coloring with $r$ colors. Choose any such $r$-edge coloring of $K_{r,r}$, and apply this coloring to the edges between $H_{v_i}$ and $H_{v_{i+1}}$ for $1\leq i\leq t-1$ and also to the edges between $H_{v_t}$ and $H_w$. Let the terminals in $H_u$ be $z_1, z_2, \ldots z_r$. Color the edge $e$ between $H_u$ and $H_{ v_1}$ by $e$ is colored $i$ if and only if $e$ is incident to $z_i$. Then the resulting edge-coloring gives an $i$-colored path from $z_i$ to each terminal in $H_w$. These edges provide the required edge-disjoint paths between our two copies of $H$. Therefore $G\circ H$ has a $K_{tr}$-immersion.
\end{proof}

Figure~\ref{K_4_4Lex} shows an example of the edge-coloring process described in the above proof, and the paths that are formed.

 \begin{figure}
\centering
\includegraphics[height=6cm]{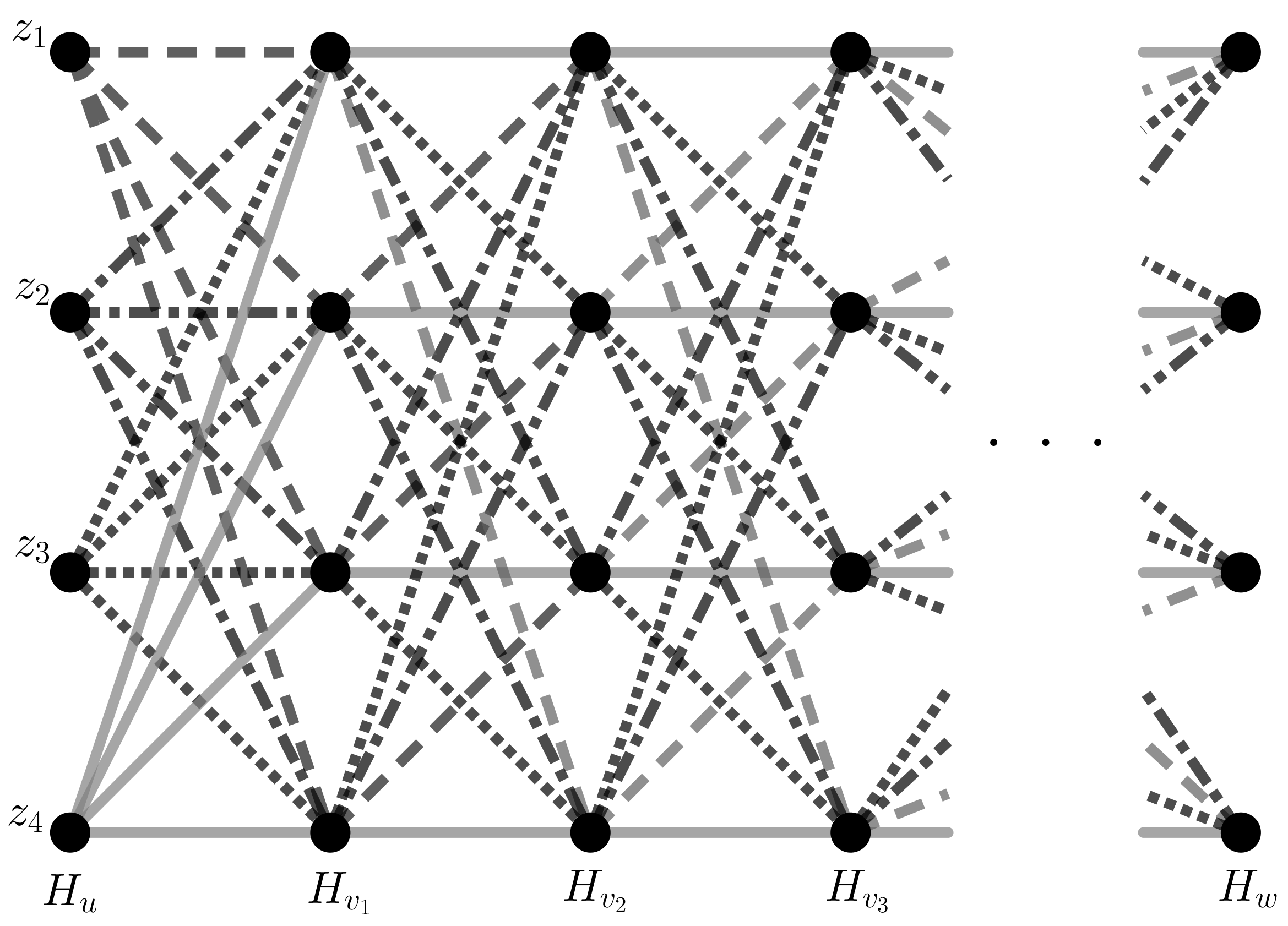}
\caption{Illustration of the edge-coloring described in the proof of Theorem \ref{lex}} when there is a path from $u$ to $w$ in $G$ with $r=4$. The edge colors are followed to form the paths from the terminals $z_1, z_2, z_3,$ and $z_4$ in $H_u$ to the terminals in $H_w$.
\label{K_4_4Lex}
\end{figure}

\begin{corollary} Given graphs $G_1, G_2, \ldots, G_\ell$, $$\im(G_1\circ G_2\circ \ldots \circ G_\ell) \geq \im(G_1)\im(G_2)\ldots\im(G_\ell).$$
\end{corollary}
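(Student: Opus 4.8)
The plan is to prove the multi-factor bound by induction on $\ell$, using Theorem~\ref{lex} as both the base case and the inductive step. The key structural fact is that the lexicographic product is associative: $G_1 \circ G_2 \circ \cdots \circ G_\ell = G_1 \circ (G_2 \circ \cdots \circ G_\ell)$, where the parenthesization can be placed anywhere. This is standard (see \cite{HIK, ImKl00}) and means that a product of $\ell$ factors can be regarded as a lexicographic product of just two graphs, namely $G_1$ and the product $G_2 \circ \cdots \circ G_\ell$ of the remaining $\ell - 1$ factors.

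For the base case $\ell = 1$ the statement is trivial, and $\ell = 2$ is exactly Theorem~\ref{lex}. For the inductive step, assume the bound holds for products of $\ell - 1$ graphs, so that $\im(G_2 \circ \cdots \circ G_\ell) \geq \im(G_2)\cdots\im(G_\ell)$. Now apply Theorem~\ref{lex} with $G = G_1$ and $H = G_2 \circ \cdots \circ G_\ell$: this yields
\[
\im(G_1 \circ G_2 \circ \cdots \circ G_\ell) = \im\bigl(G_1 \circ (G_2 \circ \cdots \circ G_\ell)\bigr) \geq \im(G_1) \cdot \im(G_2 \circ \cdots \circ G_\ell) \geq \im(G_1)\im(G_2)\cdots\im(G_\ell),
\]
where the first equality is associativity and the last inequality is the induction hypothesis. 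This completes the induction.

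The only point requiring care is the first step: we must know that the lexicographic product of $\ell$ graphs is well-defined and associative, so that grouping the last $\ell - 1$ factors together is legitimate. Since this is a well-known property of the lexicographic product and is recorded in the references the paper already cites, I expect it to be the "main obstacle" only in the trivial sense that it must be invoked; there is no genuine difficulty. In particular, no new immersion argument is needed beyond Theorem~\ref{lex} itself — the corollary is a purely formal consequence obtained by iterating the two-factor result.

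\begin{proof}
The lexicographic product is associative, so $G_1\circ G_2\circ\cdots\circ G_\ell = G_1\circ(G_2\circ\cdots\circ G_\ell)$. We argue by induction on $\ell$; the cases $\ell=1$ and $\ell=2$ are immediate (the latter being Theorem~\ref{lex}). For $\ell\geq 3$, the induction hypothesis gives $\im(G_2\circ\cdots\circ G_\ell)\geq\im(G_2)\cdots\im(G_\ell)$, and then Theorem~\ref{lex} applied to $G_1$ and $G_2\circ\cdots\circ G_\ell$ gives
\[
\im(G_1\circ G_2\circ\cdots\circ G_\ell)\geq\im(G_1)\cdot\im(G_2\circ\cdots\circ G_\ell)\geq\im(G_1)\im(G_2)\cdots\im(G_\ell),
\]
as desired.
\end{proof}
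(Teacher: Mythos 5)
Your proof is correct and follows essentially the same route as the paper: induction on the number of factors, using associativity of the lexicographic product to reduce to the two-factor case handled by Theorem~\ref{lex}. The only (inessential) difference is that you peel off the first factor, writing $G_1\circ(G_2\circ\cdots\circ G_\ell)$, while the paper peels off the last, writing $(G_1\circ\cdots\circ G_k)\circ G_{k+1}$.
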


\begin{proof} Let $G_1, G_2, \ldots ,G_\ell$ be graphs. When $\ell = 2$, Theorem \ref{lex} gives $\im(G_1\circ G_2) \geq \im(G_1)\im(G_2)$. Assume for $k\geq 2$, $\im(G_1\circ G_2\circ \ldots \circ G_k) \geq \im(G_1)\im(G_2)\ldots\im(G_k)$. When $n = k+1$, $G_1\circ G_2\circ \ldots \circ G_k\circ G_{k+1} = (G_1\circ G_2\circ \ldots \circ G_k)\circ G_{k+1}$. Thus, by induction, $\im(G_1\circ G_2\circ \ldots \circ G_k\circ G_{k+1}) \geq\im(G_1)\im(G_2)\ldots\im(G_{k+1})$.
\end{proof}

Theorem \ref{lex} combined with Proposition \ref{MaxProducts} imply the following corollaries.

\begin{corollary} $\im(K_t\circ K_r)=tr$.
\end{corollary}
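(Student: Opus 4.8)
The plan is to sandwich $\im(K_t\circ K_r)$ between matching lower and upper bounds, both of which are already essentially in hand. First I would record the trivial fact that $\im(K_t)=t$ and $\im(K_r)=r$: the complete graph $K_t$ contains a $K_t$-immersion (take the identity immersion, with no pegs), so $\im(K_t)\ge t$, while Remark~\ref{Delta} gives $\im(K_t)\le\Delta(K_t)+1=(t-1)+1=t$; the same argument applies to $K_r$.

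For the lower bound, I would simply invoke Theorem~\ref{lex} with $G=K_t$ and $H=K_r$, which yields
\[
\im(K_t\circ K_r)\ \ge\ \im(K_t)\,\im(K_r)\ =\ tr.
\]
For the upper bound, I would apply Proposition~\ref{MaxProducts}(1) with $G=K_t$, $H=K_r$, so that $n=|V(K_r)|=r$, $\Delta(K_t)=t-1$, and $\Delta(K_r)=r-1$; this gives
\[
\im(K_t\circ K_r)\ \le\ \Delta(K_r)+n\,\Delta(K_t)+1\ =\ (r-1)+r(t-1)+1\ =\ tr.
\]
Combining the two displays gives $\im(K_t\circ K_r)=tr$, as desired.

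There is no real obstacle here: the content is entirely carried by Theorem~\ref{lex} and Proposition~\ref{MaxProducts}, and the only thing to check is the elementary arithmetic $(r-1)+r(t-1)+1=tr$ together with the base computation $\im(K_t)=t$. The one point worth stating explicitly, rather than the ``main difficulty,'' is that the hypotheses of Proposition~\ref{MaxProducts}(1) are applied in the correct order of the (non-commutative) lexicographic product, i.e.\ with $K_r$ playing the role of the ``inner'' graph $H$ whose vertex count $n=r$ appears in the bound.
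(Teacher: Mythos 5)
Your proof is correct and follows exactly the paper's argument: the lower bound from Theorem~\ref{lex} and the upper bound from Proposition~\ref{MaxProducts}(1), which meet at $tr$. Your explicit remarks about $\im(K_t)=t$ and the non-commutativity of the lexicographic product are sound, though the paper leaves them implicit.
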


\begin{proof} By Theorem \ref{lex} $\im(K_t\circ K_r)\geq tr$ and by Proposition~\ref{MaxProducts}, $\im(K_t\circ K_r)\leq (r-1) + r(t-1)+1 = tr$. Therefore $\im(K_t\circ K_r)=tr$.
\end{proof}

\begin{corollary}\label{CycleLexComplete} For $n\geq 3$, $\im(C_n\circ K_r)=3r$.
\end{corollary}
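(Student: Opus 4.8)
The plan is to sandwich $\im(C_n\circ K_r)$ between $3r$ and $3r$, using Theorem~\ref{lex} for the lower bound and Proposition~\ref{MaxProducts}(1) for the upper bound, exactly as in the preceding corollary for $K_t\circ K_r$. The one input not yet recorded in the paper is the immersion number of a cycle, so I would open with that: since $\Delta(C_n)=2$, Remark~\ref{Delta} gives $\im(C_n)\le 3$, and for the matching lower bound I would pick any three vertices $a,b,c$ on $C_n$ (possible as $n\ge 3$); they split the cycle into three internally disjoint arcs $P_{a,b}, P_{b,c}, P_{c,a}$, which are pairwise edge-disjoint paths realizing a $K_3$-immersion with terminals $a,b,c$. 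Hence $\im(C_n)=3$ for every $n\ge 3$, and of course $\im(K_r)=r$.

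Next I would apply Theorem~\ref{lex} with $G=C_n$ (so $t=3$) and $H=K_r$ (so the second immersion number is $r$), yielding $\im(C_n\circ K_r)\ge tr=3r$.

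For the upper bound I would invoke Proposition~\ref{MaxProducts}(1) with $G=C_n$ and $H=K_r$; here the number of vertices of $H$ is $r$, and $\Delta(H)=r-1$, $\Delta(G)=2$, so the bound reads $\im(C_n\circ K_r)\le \Delta(H)+r\,\Delta(G)+1=(r-1)+2r+1=3r$. Combining the two inequalities gives $\im(C_n\circ K_r)=3r$, as desired.

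There is no real obstacle here; the only points requiring care are that the lexicographic product is not commutative, so I must keep $C_n$ in the first coordinate and $K_r$ in the second throughout (this matters both for applying Theorem~\ref{lex} in the right direction and for reading off the vertex count $n=r$ of the second factor in Proposition~\ref{MaxProducts}(1)), and that the small computation $(r-1)+2r+1=3r$ comes out exactly, making the bounds meet.
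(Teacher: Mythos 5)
Your proof is correct and follows the same route as the paper: the lower bound $3r$ from Theorem~\ref{lex} (after noting $\im(C_n)=3$) and the matching upper bound $(r-1)+2r+1=3r$ from Proposition~\ref{MaxProducts}(1). Your explicit verification that $\im(C_n)=3$ is a detail the paper leaves implicit, but otherwise the arguments are identical.
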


\begin{proof} By Theorem \ref{lex} $\im(C_n\circ K_r)\geq 3r$ and by Proposition~\ref{MaxProducts}, $\im(C_n\circ K_r)\leq (r-1) + r(2)+1 = 3r$. Therefore $\im(C_n\circ K_r)=3r$.
\end{proof}

As an example where we can do better than the bound of Theorem \ref{lex}, consider $K_3\circ C_5$.

\begin{proposition} $\im(K_3\circ C_5) = 13$ \end{proposition}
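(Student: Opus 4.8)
The upper bound is immediate: by Proposition~\ref{MaxProducts}(1) with $G=K_3$, $H=C_5$ and $n=5$, we get $\im(K_3\circ C_5)\le \Delta(C_5)+5\Delta(K_3)+1=2+10+1=13$ (equivalently, every vertex of $K_3\circ C_5$ has degree exactly $12$, so Remark~\ref{Delta} applies). So the work is to exhibit a $K_{13}$-immersion. Using the global description of the lexicographic product, $K_3\circ C_5$ consists of three copies $A=a_0a_1a_2a_3a_4$, $B=b_0b_1b_2b_3b_4$, $C=c_0c_1c_2c_3c_4$ of $C_5$ (indices mod $5$), together with a complete bipartite graph joining every two of them. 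The plan is to take as terminals all $15$ vertices except $a_0$ and $b_0$, so that $a_0$ and $b_0$ are the only available pegs.

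With this terminal set, I would first observe that of the $\binom{13}{2}=78$ pairs of terminals, exactly $67$ are already adjacent in $K_3\circ C_5$ — namely every pair lying in two different copies, together with the edges of the paths $a_1a_2a_3a_4$ and $b_1b_2b_3b_4$ and of the cycle $c_0c_1c_2c_3c_4$ — and for each such pair I use the single edge as its path. The remaining $11$ non-adjacent pairs are
$$a_1a_3,\ a_1a_4,\ a_2a_4,\quad b_1b_3,\ b_1b_4,\ b_2b_4,\quad c_0c_2,\ c_0c_3,\ c_1c_3,\ c_1c_4,\ c_2c_4,$$
and I route them through the pegs as follows: through $a_0$, the five length-two paths $a_1\!\to\! a_0\!\to\! a_4$, $\ b_1\!\to\! a_0\!\to\! b_3$, $\ b_2\!\to\! a_0\!\to\! b_4$, $\ c_0\!\to\! a_0\!\to\! c_2$, $\ c_1\!\to\! a_0\!\to\! c_3$; through $b_0$, the five length-two paths $a_1\!\to\! b_0\!\to\! a_3$, $\ a_2\!\to\! b_0\!\to\! a_4$, $\ b_1\!\to\! b_0\!\to\! b_4$, $\ c_0\!\to\! b_0\!\to\! c_3$, $\ c_1\!\to\! b_0\!\to\! c_4$; and finally the length-three path $c_2\!\to\! b_0\!\to\! a_0\!\to\! c_4$ for the pair $c_2c_4$. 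One then checks (routine) that the twelve edges at $a_0$ are each used exactly once, likewise the twelve edges at $b_0$, that the eleven routing paths use only peg-incident edges, and that the $67$ direct edges are distinct; hence all $78$ paths are pairwise edge-disjoint and we have a $K_{13}$-immersion.

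The hard part is really the bookkeeping forced by how tight this is: $K_3\circ C_5$ has exactly $90$ edges and the construction above uses every one of them, so the $78$ paths in fact decompose $E(K_3\circ C_5)$. This means the choice of which two vertices to delete is not free — deleting two vertices of the same copy leaves five non-adjacent pairs inside the untouched copies forming a $5$-cycle, which cannot be routed through only two pegs once the peg–peg edge is spent, so no valid assignment exists. Deleting one vertex from each of two copies reduces each of those copies to three non-adjacent pairs while still leaving the third copy with five, and a short capacity count (each peg can absorb at most five length-two correction paths, since using its peg–peg edge inside a length-two terminal-to-terminal path is impossible) shows that exactly one correction path must have length three and must use the edge $a_0b_0$. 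I would present the explicit routing above and note that it realizes precisely this forced pattern; verifying edge-disjointness is then the only computation, and it is a direct incidence check.
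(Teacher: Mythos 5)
Your construction is correct and is essentially the paper's own proof: the same upper bound via Proposition~\ref{MaxProducts}, the same terminal set (all $15$ vertices except one from each of two copies of $C_5$, which matches the paper's choice of omitting $(v_2,u_1)$ and $(v_3,u_1)$ up to an automorphism of $K_3\circ C_5$), and the same routing of the eleven non-adjacent terminal pairs through the two deleted vertices by length-two paths plus one length-three path using the peg--peg edge. The closing remarks about the terminal choice being forced are extraneous but harmless, since the explicit routing and edge-disjointness check already complete the argument.
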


\begin{proof}
By Proposition~\ref{MaxProducts}, $\im(K_3\circ C_5) \leq 2 + 5(2)+1 = 13$. We now describe the immersion. Label the vertices of $K_3$ as $v_1, v_2,$ and $v_3$. Label consecutive vertices in $C_5$ as $u_1, u_2, u_3, u_4,$ and $u_5$ so that $u_1$ and $u_5$ are adjacent. All vertices are used as terminals  except $(v_2, u_1)$ and $(v_3, u_1)$. Terminals in different copies of $C_5$ are connected by an edge. It remains to connect terminals in the same copy of $C_5$ to each other. To complete the immersion we use the following paths.

$(v_1,u_1) - (v_2, u_1) - (v_1, u_4)$

$(v_1,u_2) - (v_2, u_1) - (v_1, u_5)$

$(v_1, u_2) - (v_3, u_1) - (v_1, u_4)$

$(v_1, u_3) - (v_3, u_1) - (v_1, u_5)$

$(v_1, u_1) - (v_3, u_1) - (v_2, u_1) - (v_1, u_3)$

$(v_2, u_2) - (v_3, u_1) - (v_2, u_4)$

$(v_2, u_3) - (v_3, u_1) - (v_2, u_5)$

$(v_3, u_2) - (v_2, u_1) - (v_3, u_4)$

$(v_3, u_3) - (v_2, u_1) - (v_3, u_5)$
\end{proof}

A similar strategy to the above may be used to show $\im(C_7 \circ C_5) = 13$.

Next we explore the Cartesian product.

\section{Cartesian products}\label{Cartesian}

The following global definition of the Cartesian product will be helpful in our proof of Theorem \ref{box}. Given graphs $G$ and $H$, the graph $G\Box H$ can be obtained from a copy of $H$ by replacing each vertex in $H$ with a copy of $G$, and replacing each edge in $H$ with a perfect matching that pairs identical vertices in the copies of $G$. Since the Cartesian product is commutative, we may switch the roles of $G$ and $H$ without changing the results. In Section~\ref{CartBound}, we discuss bounds for $\im(G\Box H)$. In Section~\ref{CartPowers}, we discuss the immersion numbers of several graph powers.

\subsection{Bounds on the immersion number}\label{CartBound}

We begin by affirmatively answering Question~\ref{ques} for the Cartesian product.

\begin{theorem}\label{box} Let $G$ and $H$ be graphs with $\im(G)=t$ and $\im(H)=r$. Then $\im(G\Box H) \geq t+r-1$.
\end{theorem}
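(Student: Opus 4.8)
The plan is to use the global description of $G\Box H$ given just before the theorem: think of $G\Box H$ as a copy of $H$ with each vertex blown up into a copy of $G$, and each edge of $H$ replaced by a perfect matching between identical vertices. Fix a $K_t$-immersion in $G$ with terminals $v_1,\dots,v_t$ and a $K_r$-immersion in $H$ with terminals $u_1,\dots,u_r$. Following the general strategy described in Section~\ref{prelims}, I would take as terminals for the immersion in $G\Box H$ the vertex $(v_1,u_1)$ together with all of its neighbors among the grid of terminals $\{(v_i,u_j)\}$ — namely $(v_2,u_1),\dots,(v_t,u_1)$ (the $G$-direction neighbors, all lying in the copy $G_{u_1}$) and $(v_1,u_2),\dots,(v_1,u_r)$ (the $H$-direction neighbors, one in each copy $G_{u_j}$). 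That is $1+(t-1)+(r-1)=t+r-1$ terminals, so it suffices to produce $\binom{t+r-1}{2}$ pairwise edge-disjoint connecting paths.

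The paths split naturally into three families. First, the $t$ terminals sitting inside $G_{u_1}$ are all mutually joined using the $K_t$-immersion inside that single copy of $G$; these paths live entirely in $G_{u_1}$. Second, for each $j\ge 2$ I need paths from $(v_1,u_j)$ to the $t$ terminals in $G_{u_1}$ and to the other $(v_1,u_{j'})$. For the connection between $(v_1,u_j)$ and $(v_1,u_{j'})$ I would use the $K_r$-immersion in $H$: the path $P_{u_j,u_{j'}}$ in $H$ lifts to a path in the ``column'' $\{v_1\}\times V(H)$ of $G\Box H$, and these are edge-disjoint across all pairs $j,j'$ because the $H$-immersion paths are edge-disjoint and each uses only matching-edges sitting above a fixed copy of $v_1$. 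Third — and this is the family that needs the most care — I need to connect $(v_1,u_j)$ to each terminal $(v_i,u_1)$ in $G_{u_1}$. The natural route is: travel inside $G_{u_j}$ from $(v_1,u_j)$ to $(v_i,u_j)$ using the $K_t$-immersion there, then drop down to $G_{u_1}$ along matching edges following $P_{u_1,u_j}$ in $H$. But now all $t$ of these paths (for fixed $j$) want to use the copy of the $H$-immersion path $P_{u_1,u_j}$, and the matching joining consecutive copies of $G$ only has one edge per vertex of $G$ — so as long as the $t$ paths enter the descent at $t$ \emph{distinct} vertices $v_1,\dots,v_t$ of the fiber, they can descend in parallel without sharing matching edges; and inside each intermediate copy $G_{u_\ell}$ they use no edges at all (they just pass straight through the matching). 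The one genuine conflict is inside $G_{u_1}$ itself and inside $G_{u_j}$ itself, where this third family overlaps with the first family (inside $G_{u_1}$, the $K_t$-immersion paths among $v_1,\dots,v_t$) and with the ``travel inside $G_{u_j}$'' segments.

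Resolving that overlap is the main obstacle, and it is handled by a counting/reuse trick rather than by cleverness: inside $G_{u_1}$ the terminal $(v_i,u_1)$ is an endpoint, so the path from $(v_1,u_j)$ to $(v_i,u_1)$ only needs to \emph{reach} $(v_i,u_1)$ — it can arrive directly via the descending matching edge into $v_i$ and stop, using \emph{no} edges of $G_{u_1}$. Symmetrically, inside $G_{u_j}$, where $(v_1,u_j)$ is an actual terminal of our immersion, I route from $(v_1,u_j)$ to each $v_i$ using the fixed $K_t$-immersion of $G$ in that copy — but this is exactly one copy of the $K_t$-immersion's edge set, used once, so it is internally edge-disjoint across the $t$ targets $v_i$ (that's what being a $K_t$-immersion means). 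Thus each copy of $G$ contributes its $K_t$-immersion edge set at most once (either as the ``first family'' in $G_{u_1}$, or as the ``travel'' segments in $G_{u_j}$), each matching between consecutive copies is used by at most one descending path per vertex of $G$, and the column above $v_1$ carries the ``second family'' disjointly from everything else. Assembling these bookkeeping observations gives the $\binom{t+r-1}{2}$ edge-disjoint paths and hence the $K_{t+r-1}$-immersion in $G\Box H$. I would present the argument by fixing notation for the three families, then verifying edge-disjointness product-coordinate by product-coordinate (within a $G$-copy, within the $v_1$-column, and across matchings), which is where the write-up effort goes.
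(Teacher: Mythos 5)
Your proposal is correct and follows essentially the same route as the paper: the same choice of $t+r-1$ terminals (one full set of $G$-terminals in a distinguished copy of $G$ plus the remaining $H$-terminals along one column), the same three families of paths, and the same routing of the cross-family paths (ascend the column of $(v_i,u_1)$ via the $H$-immersion path, then traverse the $K_t$-immersion inside the copy $G_{u_j}$), with the same bookkeeping that each $G$-copy and each column contributes its immersion edge set at most once.
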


\begin{proof}
 Fix a $K_t$-immersion in $G$ and a $K_r$-immersion in $H$.

Consider $G\Box H$ with the global description described above. Suppose the terminals of the $K_t$-immersion in $G$ are $u_1, u_2, \ldots, u_t$, and suppose the terminals of the $K_r$-immersion in $H$ are $v_1, v_2, \ldots, v_r$. Choose a copy of $G$ that corresponds to a terminal $v_k$ of the $K_r$-immersion in $H$. For the terminals in our $K_{t+r-1}$-immersion, we choose the vertices that correspond to the terminals of the $K_t$-immersion in this copy of $G$,
\begin{equation}\label{vkterms}
(u_1,v_k), (u_2,v_k), \ldots, (u_t,v_k),
\end{equation}
 along with the vertices \begin{equation}\label{ulterms} (u_l, v_1), (u_l, v_2), \ldots, (u_l, v_{k-1}), (u_l, v_{k+1}), \ldots, (u_l, v_r)
 \end{equation}
 where $u_l$ is some fixed terminal of the $K_t$-immersion in $G$.
For the paths between the terminals in set (\ref{vkterms}), use the edge-disjoint paths provided by the $K_t$-immersion in $G$ (keeping $v_k$ constant). For the paths between the terminals in set (\ref{ulterms}), use the paths provided by the $K_r$-immersion in $H$ (keeping $u_l$ constant, and joining $v_1, v_2, \ldots, v_{k-1}, v_{k+1}, \ldots, v_r$). It remains to connect $(u_i, v_k)$ to $(u_l, v_j)$ for $1 \leq i \leq t$ and $1 \leq j \leq r$. For this, first use the path from $v_k$ to $v_j$ in the $K_r$-immersion, keeping $u_i$ constant, to get from $(u_i, v_k)$ to $(u_i, v_j)$. Then use the path from $u_i$ to $u_l$ in the $K_t$-immersion, keeping $v_j$ constant, to get from $(u_i, v_j)$ to $(u_l, v_j)$. Note that the first segment of this path is edge-disjoint from the paths we used to connect the vertices in (\ref{ulterms}), and the second segment of this path is the first time we have used edges within the $v_j$ copy of $H$. Hence we have built a $K_{t+r-1}$-immersion in $G\Box H$ and $\im(G\Box H)\geq t+r-1$.
\end{proof}

The following corollary shows that the above bound is tight for the Cartesian product of two complete graphs.

\begin{corollary} $\im(K_t\Box K_r) = t+r-1$
\end{corollary}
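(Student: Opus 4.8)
The plan is to sandwich $\im(K_t\Box K_r)$ between two bounds that are already available. The lower bound comes directly from Theorem~\ref{box}: since $K_t$ trivially contains a $K_t$-immersion (it \emph{is} $K_t$) and $K_r$ contains a $K_r$-immersion, we have $\im(K_t)=t$ and $\im(K_r)=r$ (the reverse inequalities $\im(K_t)\le\Delta(K_t)+1=t$ and $\im(K_r)\le r$ following from Remark~\ref{Delta}), so Theorem~\ref{box} applies with $G=K_t$, $H=K_r$ to give $\im(K_t\Box K_r)\ge t+r-1$.

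For the matching upper bound I would invoke Proposition~\ref{MaxProducts}(2), which states $\im(G\Box H)\le\Delta(G)+\Delta(H)+1$. Taking $G=K_t$ and $H=K_r$ and substituting $\Delta(K_t)=t-1$ and $\Delta(K_r)=r-1$ yields $\im(K_t\Box K_r)\le (t-1)+(r-1)+1=t+r-1$. Combining the two inequalities gives $\im(K_t\Box K_r)=t+r-1$, as claimed.

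There is essentially no obstacle here: the corollary is exactly the statement that the construction in Theorem~\ref{box} is optimal for complete factors, and optimality is witnessed by the crude degree bound of Remark~\ref{Delta}. The only point that warrants a sentence of care is recording why $\im(K_n)=n$, so that Theorem~\ref{box} can legitimately be fed the values $t$ and $r$; everything else is a one-line substitution.
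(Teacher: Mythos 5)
Your proof is correct and matches the paper's argument exactly: the lower bound from Theorem~\ref{box} applied to $G=K_t$, $H=K_r$, and the upper bound from Proposition~\ref{MaxProducts}(2) via $\Delta(K_t)=t-1$ and $\Delta(K_r)=r-1$. The extra sentence justifying $\im(K_n)=n$ is a reasonable touch of care that the paper leaves implicit.
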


\begin{proof} By Theorem~\ref{box}, $\im(K_t\Box K_r) \geq t+r-1$. By Proposition~\ref{MaxProducts}, $\im(K_t\Box K_r) \leq (t-1)+(r-1)+1 = t+r-1$. Therefore $\im(K_t\Box K_r) = t+r-1$.
\end{proof}

As an example where we can do better than the bound of Theorem~\ref{box}, we now prove $\im(G\Box P_n)\geq t+2$ for $n\geq5$, when $G\neq K_t$.
\begin{theorem}\label{BoxPath} Let $G$ be connected with $\im(G)=t$. Then $\im(G\Box P_n)\geq t+2$ for $n\geq5$ and $G\neq K_t$.
\end{theorem}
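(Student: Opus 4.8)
The plan is to use the global description of the Cartesian product recalled just before Theorem~\ref{box}: $G\Box P_n$ consists of $n$ disjoint copies $G_1,\dots,G_n$ of $G$, with a perfect matching between $G_\ell$ and $G_{\ell+1}$ joining the two copies of each vertex; I write $(g,\ell)$ for the copy of $g\in V(G)$ in $G_\ell$. Fix a $K_t$-immersion in $G$ with terminals $u_1,\dots,u_t$ and edge-disjoint connecting paths $P_{u_i,u_j}$. First I would observe that $G\neq K_t$ together with $\im(G)=t$ forces $|V(G)|\ge t+1$: if $|V(G)|=t$, then a $K_t$-immersion would need $\binom{t}{2}$ pairwise edge-disjoint paths among those $t$ vertices, whence $|E(G)|\ge\binom{t}{2}$ and $G=K_t$. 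So I may pick a vertex $w\in V(G)\setminus\{u_1,\dots,u_t\}$, and since $G$ is connected I fix a path $Q$ in $G$ from $u_1$ to $w$.

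As the $t+2$ terminals of a $K_{t+2}$-immersion in $G\Box P_n$ I would take
\[
(u_1,3),(u_2,3),\dots,(u_t,3),\qquad (u_1,2),\qquad (u_1,4);
\]
note these lie only in layers $2,3,4$, so when $n\ge 5$ the layers $G_1$ and $G_5$ are completely unused and available for rerouting. The connecting paths would be: between $(u_i,3)$ and $(u_j,3)$, the copy of $P_{u_i,u_j}$ inside $G_3$; between $(u_1,2)$ and $(u_1,3)$ (resp.\ $(u_1,3)$ and $(u_1,4)$) the single matching edge at $u_1$ between layers $2$ and $3$ (resp.\ $3$ and $4$); for $i\ge 2$, from $(u_1,2)$ to $(u_i,3)$ the copy of $P_{u_1,u_i}$ inside $G_2$ followed by the matching edge at $u_i$ up to layer $3$, and symmetrically from $(u_1,4)$ to $(u_i,3)$ through $G_4$; and finally, from $(u_1,2)$ to $(u_1,4)$ the detour $(u_1,2)\to(u_1,1)$, then the copy of $Q$ in $G_1$ to $(w,1)$, then the matching edges at $w$ upward through $(w,2),(w,3),(w,4)$ to $(w,5)$, then the copy of $Q$ in $G_5$ back to $(u_1,5)$, then the matching edge down to $(u_1,4)$. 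This already gives the pair count $\binom{t}{2}+2t+1=\binom{t+2}{2}$, so the only thing to check is edge-disjointness.

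The verification is then bookkeeping by edge type. Horizontal edges of $G_3$ are used only by the $K_t$-immersion copies, which are pairwise edge-disjoint; horizontal edges of $G_2$ (resp.\ $G_4$) only by the pairwise edge-disjoint copies of $P_{u_1,u_2},\dots,P_{u_1,u_t}$; horizontal edges of $G_1$ and $G_5$ only once each, by $Q$; horizontal edges of $G_6,\dots,G_n$ not at all. For the matching edges, in every layer-pair that is touched the edges used sit at pairwise distinct vertices: between layers $2$ and $3$ they occur precisely at $u_1,\dots,u_t$ and at $w$ (the $u_i$'s from the paths into the terminals $(u_i,3)$, and $w$ from the detour), between layers $3$ and $4$ likewise, between layers $1$ and $2$ only at $u_1$ and $w$, between layers $4$ and $5$ only at $u_1$ and $w$ — all distinct because $w$ is not a terminal. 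Hence all chosen paths are pairwise edge-disjoint, exhibiting a $K_{t+2}$-immersion in $G\Box P_n$.

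The one genuinely delicate step is the $(u_1,2)$--$(u_1,4)$ path. Its projection to $P_n$ runs from vertex $2$ to vertex $4$ and so must pass through vertex $3$; thus the path must go through some $(x,3)$, and if $x$ were one of the terminals $u_1,\dots,u_t$ the two matching edges at $x$ between layers $2$--$3$ and $3$--$4$ would already be committed. This is exactly why a non-terminal vertex $w$ is needed — forcing $G\neq K_t$ — and why $G$ must be connected (to supply $Q$) and $n\ge 5$ (to supply the two spare layers $G_1,G_5$ in which to route $Q$ and its mirror copy without disturbing the crowded layers $2,3,4$). I expect confirming that this detour collides with nothing — while entirely elementary — to be the main point to get right; the rest is the routine case analysis sketched above.
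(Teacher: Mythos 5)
Your proof is correct and follows essentially the same route as the paper's: the same choice of terminals in layers $2,3,4$, the same use of the $K_t$-immersion in layers $2,3,4$ plus matching edges, and the same detour for the $(u_1,2)$--$(u_1,4)$ path through a non-terminal vertex routed via layers $1$ and $5$. Your added justification that $G\neq K_t$ forces a non-terminal vertex to exist is a nice touch that the paper states without proof, but it does not change the argument.
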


\begin{proof} Let $G\neq K_t$ be a connected graph with $\im(G)=t$ and let $v_1, v_2, \ldots, v_t$ be the terminals in a $K_t$-immersion in $G$. Since $G\neq K_t$ there is at least one non-terminal vertex in $G$, call it $x$. Let $u_1, u_2, \ldots, u_n$ be the vertices of $P_n$ in order along the path. We choose as our terminals of the $K_{t+2}$-immersion, $(v_1, u_3), (v_2, u_3), \ldots, (v_t, u_3)$, $(v_1, u_2)$, and $(v_1, u_4)$.

For the paths between $(v_i, u_3)$ and $(v_j, u_3)$ we use the paths provided by the $K_t$-immersion in the copy of $G$ corresponding to $u_3$. In order to complete the immersion we need edge-disjoint paths from $(v_1, u_2)$ and $(v_1, u_4)$ to $(v_j, u_3)$ and between $(v_1, u_2)$ and $(v_1, u_4)$. For the paths from $(v_1, u_j)$, for $j=2$ or $4$, to $(v_i, u_3)$ use the edge-disjoint paths from $(v_1, u_j)$ to $(v_i, u_j)$ in the $u_j$ copy of $G$ and the edge $(v_i, u_j)-(v_i, u_3)$.   For the path from $(v_1, u_2)$ to $(v_1, u_4)$ use the edge $(v_1, u_2)-(v_1, w_1)$ followed by any path to $(x, u_1)$ in the $u_1$ copy of $G$. Then the path $(x, u_1)-(x, u_2)-(x, u_3)-(x, u_4)-(x, u_5)$ followed by any path from $(x, u_5)$ to $(v_1, u_5)$ in the $u_5$ copy of $G$. Then use the edge $(v_1, u_5)-(v_1, u_4)$ to complete the path. This completes the immersion of $K_{t+2}$.
 \end{proof}

As an explanation for why $G\neq K_t$ in Theorem~\ref{BoxPath} we now show $\im(K_t\Box P_n)=t+1$. This is also an example of a graph that does not reach the bound of Proposition~\ref{MaxProducts} because $\Delta(K_t\Box P_n)=t+1$.

\begin{theorem}\label{completeBoxpath} Given a complete graph on $t$ vertices, $K_t$, and a path on $n\geq2$ vertices, $P_n$, $\im(K_t\Box P_n)=t+1$.\end{theorem}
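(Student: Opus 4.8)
The plan is to prove the two inequalities separately. The lower bound $\im(K_t \Box P_n) \geq t+1$ follows immediately from Theorem~\ref{box} with $\im(K_t)=t$ and $\im(P_n)=2$ (any path on at least two vertices has a $K_2$-immersion, so $r=2$), giving $\im(K_t \Box P_n) \geq t+2-1 = t+1$. The work is therefore entirely in the upper bound $\im(K_t \Box P_n) \leq t+1$. Note $\Delta(K_t \Box P_n) = (t-1) + 2 = t+1$, so Remark~\ref{Delta} already gives $\im(K_t \Box P_n) \leq t+2$; I need to rule out a $K_{t+2}$-immersion by a more careful argument, since the naive degree bound is off by one.

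First I would set up notation: write the copies of $K_t$ as layers $L_1, \dots, L_n$ indexed by the path vertices $u_1, \dots, u_n$, with matching edges (the ``vertical'' edges) joining $(v_i, u_j)$ to $(v_i, u_{j+1})$. Suppose toward a contradiction that there is a $K_{t+2}$-immersion with terminal set $T$, $|T| = t+2$. Each terminal needs $t+1$ edge-disjoint paths leaving it, and since every vertex has degree exactly $t+1$, \emph{every} edge incident to a terminal must be used by one of its immersion paths, and moreover no immersion path may pass \emph{through} a terminal as an internal vertex (a peg) — if it did, that terminal would need to absorb two path-edges just for passing through, leaving only $t-1$ edges for its own $t+1$ required paths. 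So terminals are endpoints only, and all $t+1$ edges at each terminal are ``saturated.'' The key structural step is then a counting/parity argument on how $T$ distributes among the layers. Since a layer $L_j$ is a clique on $t$ vertices, it can contain at most $t$ terminals; I'd like to show $T$ must essentially live in one or two consecutive layers and then derive a contradiction from edge-disjointness — in particular, the ``bandwidth'' of the product (few edges cross any cut between $L_{\le j}$ and $L_{> j}$: only $t$ vertical edges) severely limits how immersion paths can travel between layers, and each terminal using all its vertical edges forces congestion on these thin cuts.

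Concretely, the main case analysis is on $k := $ the index of the layer containing the most terminals. If some layer $L_j$ contains all $t+2$ terminals, that is impossible since $|L_j| = t < t+2$. If $L_j$ contains $t+1$ terminals, then (as $|L_j|=t$) this is also impossible; so every layer contains at most $t$ terminals, and since $|T|=t+2$ at least two layers contain terminals, and in fact the terminals occupy at least $\lceil (t+2)/t \rceil \geq 2$ layers. I would argue that the cut separating the terminal-containing layers into a ``left part'' $A$ and ``right part'' $B$ (choosing the cut between two consecutive layers so both sides are nonempty) is crossed by only $t$ vertical edges, yet the terminals force many edge-disjoint paths to cross it: every pair with one endpoint a terminal in $A$ and the other a terminal in $B$ needs its own path across, and there are at least $|T\cap A|\cdot|T\cap B|$ such pairs, plus each terminal's vertical edges are forced into use. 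Optimizing, $|T \cap A| \cdot |T \cap B| \geq 1 \cdot (t+1)$ in the worst split but can be as small as... — here I need the sharper observation that if $|T\cap A| = 1$, say the lone left terminal is $w$, then $w$ has only the single vertical edge crossing the cut but needs $|T \cap B| \geq t+1$ edge-disjoint paths to reach all right-terminals, and only one of those can use $w$'s vertical edge while the rest must route through other layers — but every path from $w$ to the right must use \emph{some} vertical edge in column $\mathrm{col}(w)$ or first travel horizontally inside $L_{\mathrm{col}(w)}$, and the horizontal edges of $L_{\mathrm{col}(w)}$ are a clique on $t$ vertices with only $\binom{t}{2}$ edges, all of which at terminal-vertices are already committed. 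Tracking this bookkeeping to a numerical contradiction is the substance of the proof.

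The main obstacle I anticipate is the bookkeeping in the case where terminals are spread across two adjacent layers in a balanced way (e.g.\ roughly $t/2$ in each of two consecutive layers, or $t$ in one layer and $2$ in a neighbor): there the thin-cut argument alone gives $|T\cap A|\cdot|T\cap B|$ paths across $t$ vertical edges, which is only a contradiction when the product exceeds $t$ — true for a $t$-vs-$2$ split when $t \geq 3$ but tight-looking for small cases, and one must also account for the fact that non-terminal pegs in intermediate layers give the paths ``room'' to spread out. I expect the clean finish is to combine the cut bound with the saturation fact (all $t+1$ edges at each terminal used, terminals never interior) to pin down that the two outer layers $L_1$ and $L_n$ (or whichever are extremal among terminal-layers) each force their full clique $+$ one vertical edge into use, and then a global edge count — total edges used $= (t+2)(t+1)/2$ for the clique immersion, versus edges available incident to $T$ and reachable — exceeds what the graph provides. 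I would present the argument as: (1) saturation and no-interior-terminals; (2) at most $t$ terminals per layer, hence $\geq 2$ layers used; (3) the thin-cut congestion bound; (4) a short case check on the number of terminal-layers ($2$ versus $\geq 3$), closing each with (3) plus the small-$t$ sanity checks handled directly (the cases $t \le 2$ being trivial since then $K_t \Box P_n$ is a path or a ladder, with easily-computed immersion number).
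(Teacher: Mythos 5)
You have correctly isolated the two halves (lower bound from Theorem~\ref{box}, upper bound by ruling out a $K_{t+2}$-immersion) and you have the right key tool: the cut of only $t$ matching edges between consecutive $K_t$-layers, together with the fact that $a$ terminals on one side and $t+2-a$ on the other force $a(t+2-a)$ edge-disjoint paths across that cut (this is exactly the paper's Lemma~\ref{edgecut}, derived from the Corner Separating Lemma of \cite{CH}). The genuine gap is that you do not carry out the one-line optimization that closes the argument, and instead convince yourself that more machinery is needed. Since a layer has only $t$ vertices, the $t+2$ terminals occupy at least two layers, so some inter-layer cut has $a$ terminals on one side and $t+2-a$ on the other with $1\le a\le t+1$. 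The quadratic $a(t+2-a)$ is \emph{concave} in $a$, so over $1\le a\le t+1$ it is \emph{minimized at the endpoints}, where it equals $1\cdot(t+1)=t+1>t$. Every admissible split therefore already exceeds the cut capacity; there is nothing left to prove. Your stated worries point in the wrong direction on both counts: the $|T\cap A|=1$ case is not the hard case needing a ``sharper observation'' (it forces $t+1$ edge-disjoint paths through $t$ cut edges, an immediate contradiction), and the balanced splits you flag as ``tight-looking'' in fact maximize the product (e.g.\ $t$-vs-$2$ gives $2t$, comfortably above $t$).

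As a consequence, the entire apparatus you build around this --- the saturation claim that every edge at a terminal is used, the no-interior-terminal observation, the proposed global edge count $(t+2)(t+1)/2$ versus available edges, and the case analysis on the number of terminal-bearing layers --- is unnecessary, and your proposal never actually reaches a contradiction by any of these routes (you explicitly defer the ``bookkeeping'' and only ``expect'' a clean finish). The paper's proof is precisely your step (3) done carefully: locate the first layer containing a terminal, apply the cut lemma to get $t\ge a(t+2-a)$, i.e.\ $a^2-a(t+2)+t\ge 0$, and note this convex quadratic is negative at $a=1$ and $a=t+1$ and hence throughout the range. To repair your write-up, delete steps (1), (2, beyond ``at least two layers''), and (4), and replace the hand-wringing in (3) with the endpoint evaluation of $a(t+2-a)$.
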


To prove Theorem~\ref{completeBoxpath} we use the following lemma which follows from the \emph{Corner Separating Lemma} found in \cite{CH}.

\begin{lemma}\label{edgecut} Let $G$ be a graph, $C$ a cutset of edges in $G$, and $M$ a connected component of $G-C$. If $G$ has an immersion in which $k$ terminals are in $G-M$ and $j$ terminals are in $M$, then $|C|\geq kj$.\end{lemma}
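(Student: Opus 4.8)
The plan is to derive Lemma~\ref{edgecut} directly from the Corner Separating Lemma of \cite{CH}, which (in the form we need) asserts that if $G$ has a $K_m$-immersion, and $C$ is an edge-cut separating $G$ into parts containing $a$ and $m-a$ of the $m$ terminals respectively, then $|C| \ge a(m-a)$. So the whole proof is a matter of reducing the hypotheses of our lemma to a situation where the Corner Separating Lemma applies.

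First I would set up notation: suppose $G$ has an immersion of some $K_m$ with terminal set $T$, $|T|=m$, and that $C$ is an edge-cut of $G$ with $M$ a connected component of $G-C$; let $j = |T \cap M|$ and $k = |T \setminus M|$ (so $k+j \le m$, with equality only if $G-C$ has no other components containing terminals). The point is that the $K_m$-immersion restricts, in a useful way, to a smaller immersion. Concretely, I would delete from $G$ all terminals outside $M$ except for $k$ of them, and restrict the immersion to the $K_{k+j}$ spanned by the $j$ terminals in $M$ together with those $k$ chosen terminals outside; the paths of the immersion joining these $k+j$ terminals are already present and pairwise edge-disjoint in $G$, so $G$ has a $K_{k+j}$-immersion whose terminal set $T'$ satisfies $|T' \cap M| = j$ and $|T' \setminus M| = k$.

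Next, I would observe that $C$ is still an edge-cut in $G$ whose removal separates the $j$ terminals of $T'$ inside $M$ from the $k$ terminals of $T'$ outside $M$ — indeed $M$ is a union of components of $G-C$ and every path of the $K_{k+j}$-immersion joining a terminal in $M$ to one outside $M$ must use an edge of $C$. This is exactly the configuration to which the Corner Separating Lemma of \cite{CH} applies, with the two sides of the cut carrying $j$ and $k$ terminals, yielding $|C| \ge kj$, as desired.

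The main obstacle, and the place where care is needed, is matching the precise statement of the Corner Separating Lemma in \cite{CH} to what is used here: that lemma is typically phrased for a single $2$-cut or ``corner'' separating two prescribed terminals, or for an arbitrary edge-cut with a count on each side, and one must check that its hypotheses (connectivity of $M$, the cut genuinely separating the two terminal groups, no issue with pegs lying in $C$ or straddling the cut) are met. In particular, a peg of the immersion may lie in $M$ or outside it, and a single path may cross $C$ several times; the argument only needs that \emph{each} of the $kj$ paths between the two terminal groups contributes at least one edge to $C$ and that distinct paths contribute distinct edges, which follows from edge-disjointness of the immersion paths. Once that bookkeeping is in place the inequality $|C| \ge kj$ is immediate.
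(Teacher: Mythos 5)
Your proposal is correct and, once the scaffolding about reducing to the Corner Separating Lemma is stripped away, your final paragraph is exactly the paper's own proof: the $kj$ edge-disjoint paths joining the $k$ terminals outside $M$ to the $j$ terminals in $M$ must each use an edge of $C$ (since $M$ is a component of $G-C$), and edge-disjointness forces these edges to be distinct, giving $|C|\geq kj$. The paper likewise only cites the Corner Separating Lemma as motivation and then argues directly, so no comparison of approaches is needed.
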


\begin{proof} Let $G$ be a graph, $C$ be a cutset of edges, and $M$ a connected component of $G-C$. Suppose $G$ has an immersion in which $k$ terminals are in $G-M$ and $j$ terminals are in $M$. Each of the terminals in $G-M$ must be connected by a path to each of the terminals in $M$ and these paths must be edge-disjoint. Therefore, there must be $kj$ edge-disjoint paths from the $k$ terminals in $G-M$ to the $j$ terminals in $M$. Since $G-M$ and $M$ are connected by an edge cutset $C$, each of these $kj$ paths must use a unique edge of $C$. Thus, $|C|\geq kj$.
\end{proof}

\begin{proof} (Theorem \ref{completeBoxpath}) Consider the graph $K_t\Box P_n$ with $n\geq 2$. By Theorem~\ref{box}, $\im(K_t\Box P_n)\geq t+2-1=t+1$. Suppose, for a contradiction, that $\im(K_t\Box P_n)=t+2$. By the global description of the Cartesian product, $K_t\Box P_n$ can be thought of as a sequence of copies of $K_t$ laid out like a path and connected by matchings along the path's edges. Therefore between consecutive copies of  $K_t$ there is an edge cutset of size $t$. Since each copy of $K_t$  has only $t$ vertices and we have $t+2$ terminals total, all of the terminals cannot be in a single copy of $K_t$. Starting at one end of the sequence of copies of $K_t$, find the first copy of $K_t$ that contains a terminal. Suppose this copy of $K_t$ contains $a$ terminals where $1\leq a\leq t+1$, then there are $t+2-a$ terminals that are separated from this copy by an edge cut of size $t$. By Lemma \ref{edgecut}, $t \geq a(t+2-a)$. That is, $a^2-a(t+2)+t \geq 0$. When $a=1$, $a^2-a(t+2)+t\leq 0$. Similarly when $a=t+1$, $a^2-a(t+2)+t\leq 0$. The absolute minimum value of this quadratic occurs at $a=\frac{t+2}{2}$, which is between $1$ and $t+1$. Therefore, $a^2-a(t+2)+t\leq 0$ for the range of interest, this contradicts $\im(K_t\Box P_n)=t+2$. Thus, $\im(K_t\Box P_n)=t+1$.
\end{proof}

\subsection{Powers of graphs and immersion number}\label{CartPowers}

Theorem~\ref{box} combined with Proposition~\ref{MaxProducts} imply the following corollaries. Here $G^{\ell}$ is the Cartesian product of $G$ with itself $\ell$ times.

\begin{corollary}\label{G^d} Given graphs $G_1, G_2, \ldots, G_\ell$, $\im(G_1\Box G_2 \Box \ldots \Box G_{\ell})\geq \sum_{i=1}^\ell G_i - (\ell-1)$. Further the bound is tight if $\Delta(G_i) = \im(G_i)-1$ for each $i$.\end{corollary}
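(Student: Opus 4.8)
The plan is to prove the lower bound by induction on $\ell$, with Theorem~\ref{box} supplying both the base case and the inductive step, and then to derive the tightness assertion from Remark~\ref{Delta} together with the additivity of maximum degree under the Cartesian product. (Throughout I read $\sum_{i=1}^\ell G_i$ in the statement as $\sum_{i=1}^\ell \im(G_i)$.)

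\emph{Lower bound.} For $\ell = 1$ the claim is trivial, and for $\ell = 2$ it is exactly Theorem~\ref{box}. Suppose the bound holds for some $k \geq 2$, so that $\im(G_1\Box \cdots \Box G_k) \geq \sum_{i=1}^k \im(G_i) - (k-1)$. Since the Cartesian product is associative, $G_1\Box \cdots \Box G_{k+1} = (G_1\Box \cdots \Box G_k)\Box G_{k+1}$. Put $t = \im(G_1\Box \cdots \Box G_k)$ and $r = \im(G_{k+1})$. Applying Theorem~\ref{box} to the two factors $G_1\Box \cdots \Box G_k$ and $G_{k+1}$ gives $\im((G_1\Box \cdots \Box G_k)\Box G_{k+1}) \geq t + r - 1$; since $t \geq \sum_{i=1}^k \im(G_i) - (k-1)$, the right-hand side is at least $\sum_{i=1}^{k+1}\im(G_i) - k$, which completes the induction.

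\emph{Tightness.} Now assume $\Delta(G_i) = \im(G_i) - 1$ for every $i$. Iterating the identity $\Delta(G\Box H) = \Delta(G) + \Delta(H)$ used in Case~2 of Proposition~\ref{MaxProducts}, we obtain $\Delta(G_1\Box \cdots \Box G_\ell) = \sum_{i=1}^\ell \Delta(G_i) = \sum_{i=1}^\ell(\im(G_i) - 1)$. Then Remark~\ref{Delta} yields $\im(G_1\Box \cdots \Box G_\ell) \leq \Delta(G_1\Box \cdots \Box G_\ell) + 1 = \sum_{i=1}^\ell \im(G_i) - (\ell - 1)$, which matches the lower bound just established; hence equality holds.

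\emph{Main obstacle.} There is essentially no obstacle here: the argument is a routine induction plus one degree count. The only subtlety worth flagging is that in the inductive step Theorem~\ref{box} is invoked using a \emph{lower bound} on the immersion number of the first factor rather than its exact value — this is harmless because the conclusion $t+r-1$ of that theorem is monotone nondecreasing in $t$.
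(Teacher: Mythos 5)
Your proof is correct and follows essentially the same route as the paper's: induction on $\ell$ via Theorem~\ref{box} for the lower bound, and additivity of $\Delta$ under $\Box$ combined with Remark~\ref{Delta} (equivalently Proposition~\ref{MaxProducts}) for tightness. Your reading of $\sum_{i=1}^\ell G_i$ as $\sum_{i=1}^\ell \im(G_i)$ correctly repairs the typo in the statement, and your remark about monotonicity in $t$ is a sensible (if minor) point the paper leaves implicit.
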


\begin{proof} When $\ell=2$, Theorem~\ref{box} gives $\im(G_1\Box G_2) \geq \im(G_1)+\im(G_2)-1$. Assume $\im(G_1\Box G_2 \Box \ldots \Box G_k)\geq \sum_{i=1}^k G_i - (k-1)$. When $\ell=k+1$, $G_1\Box G_2 \Box \ldots \Box G_k\Box G_{k+1}=(G_1\Box G_2 \Box \ldots \Box G_k)\Box G_{k+1}$. Thus, by Theorem~\ref{box}, $\im(G_1\Box G_2 \Box \ldots \Box G_k\Box G_{k+1})\geq \sum_{i=1}^k G_i - (k-1) +\im(G_{k+1}) - 1= \sum_{i=1}^{k+1} G_i - k$. Therefore, $\im(G_1\Box G_2 \Box \ldots \Box G_{\ell})\geq \sum_{i=1}^\ell G_i - (\ell-1)$.

If $\Delta(G_i)=\im(G_i)-1$ for all $i$, then by induction $\Delta(G_1\Box G_2 \Box \ldots \Box G_{\ell})=\sum_i^\ell\Delta(G_i)$ and by Proposition~\ref{MaxProducts} $\im(G_1\Box G_2 \Box \ldots \Box G_{\ell})= \sum_{i=1}^\ell G_i - (\ell-1)$.
\end{proof}

\begin{remark}\label{hypercube} Since the $d$-dimensional hypercube, $Q_d$, is the Cartesian product of $K_2$ with itself $d$ times Corollary~\ref{G^d} gives $\im(Q_d)=d+1$.
\end{remark}

\begin{remark}\label{hamming} Corollary~\ref{G^d} gives us the immersion number of the Hamming graph (the Cartesian product of $K_n$ with itself $d$ times) $\im(K^d_n)=d(n-1)+1$. \end{remark}

In contrast, when $\Delta(G) \neq \im(G)-1$, then the bound is not tight. As an example  we find the immersion number of the Cartesian product of a path, $P_n$, with itself $d$ times. We begin by showing $\im(P_6^2)=5$.

\begin{proposition}\label{BoxPath6} $\im(P_6^2)=5$.
\end{proposition}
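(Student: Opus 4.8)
The plan is to establish $\im(P_6^2)=5$ by proving both bounds. For the upper bound, since $\Delta(P_6)=2$ we have $\Delta(P_6^2)=4$, so Remark~\ref{Delta} gives $\im(P_6^2)\le 5$; but this needs to be improved, since the five candidate terminals of degree $4$ in $P_6^2$ (the interior vertices $\{u_2,u_3,u_4,u_5\}^2$ form a $4\times 4$ block of degree-$4$ vertices, more than enough, so Remark~\ref{Delta} alone only gives $\le 5$). Actually Remark~\ref{Delta} gives exactly $\le 5$ here, so the only work for the upper bound is to rule out $\im(P_6^2)=6$: one would need six terminals, and I would argue via Lemma~\ref{edgecut} applied to a well-chosen column cut. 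Writing $P_6^2$ as six copies of $P_6$ (columns) linked by matchings, each cut between consecutive columns has size $6$; if the first column meeting a terminal contains $a$ of the six terminals, Lemma~\ref{edgecut} forces $6\ge a(6-a)$, which fails for $2\le a\le 4$, leaving only $a=1$ or $a\in\{5,6\}$. The cases $a=5,6$ are impossible because a single copy of $P_6$ has immersion number at most $\Delta(P_6)+1=3<5$ (no $K_5$-immersion fits in one column since only two vertices of the column have degree $\ge 4$ in $P_6^2$ after accounting for matchings — more carefully, a column contributes at most $2$ edges per internal vertex within the column, so at most $3$ terminals of any immersion can live in one column once we also cap the matching contributions). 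One then iterates: having $a=1$ in the first nonempty column, move to the next column, and repeat, showing the six terminals must be spread out in a way that eventually violates an edge-cut bound across some column boundary. The hard part of the upper bound is making this iteration airtight; I expect it is cleanest to instead cut $P_6^2$ into the left three columns versus the right three columns (a cut of size $6$) and argue that the terminals split as $a$ on the left, $6-a$ on the right with $6\ge a(6-a)$, forcing $\{a,6-a\}=\{1,5\}$ or $\{0,6\}$, then recurse on the side with $\ge 5$ terminals inside a $P_6\Box P_3$, which has $\Delta=4$ hence $\im\le 5$, and inside $P_6\Box P_3$ redo the cut argument to force a contradiction with $5$ or $6$ terminals.

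For the lower bound $\im(P_6^2)\ge 5$, I would exhibit an explicit $K_5$-immersion. Label the vertices of $P_6^2$ as $(i,j)$ with $1\le i,j\le 6$. A natural choice of five terminals is a ``plus'' centered at a near-central vertex together with its four neighbors, mirroring the general terminal-selection strategy described in Section~\ref{prelims}: take the center $c=(3,3)$ and its neighbors $(2,3),(4,3),(3,2),(3,4)$. The four edges of the plus give four of the $\binom{5}{2}=10$ required paths directly. The remaining six paths connect pairs of ``arm'' vertices of the plus; these must be routed through the surrounding grid avoiding the already-used plus edges and each other. Because $P_6^2$ is a $6\times 6$ grid, there is ample room: e.g. route $(2,3)$--$(3,2)$ through $(2,2)$; route $(4,3)$--$(3,4)$ through $(4,4)$; route $(2,3)$--$(3,4)$ through $(2,4),(2,5),(3,5)$ or similar; route $(4,3)$--$(3,2)$ symmetrically through $(4,2),(4,1),(3,1)$; route $(2,3)$--$(4,3)$ through the far column, e.g. $(2,3)$--$(1,3)$--$(1,4)$--\dots, actually better through one of the extreme rows; and route $(3,2)$--$(3,4)$ through the bottom, $(3,2)$--$(6,2)$-style detour. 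The verification that all ten paths are pairwise edge-disjoint and terminal-internally-disjoint is a finite check; I would present it as a labeled figure plus an edge-list, exactly as done for $\im(K_3\circ C_5)=13$ earlier. The main obstacle is the upper bound: showing $6$ terminals is genuinely impossible requires more than the crude $\Delta$ bound, and the edge-cut recursion must be organized carefully so that every branching case ($a=0,1,5,6$ across two nested cuts) terminates in a contradiction; I anticipate that tracking which edges of the cuts are ``spent'' by the various forced $a(6-a)$ paths, and combining two or three nested column cuts, is where the real care lies.
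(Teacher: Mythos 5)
Your overall plan is the paper's plan: the terminals are exactly the paper's choice (the central vertex $(3,3)$ together with its four neighbours), four of the six non-adjacent terminal pairs are routed through the corners $(2,2),(2,4),(4,2),(4,4)$ of the ``plus,'' and the two opposite-arm pairs are sent on long detours around the boundary of the grid. Two things need fixing, though. First, your treatment of the upper bound is internally confused: once you observe $\Delta(P_6^2)=4$ and invoke Remark~\ref{Delta}, you already have $\im(P_6^2)\leq 5$, so $\im(P_6^2)=6$ is \emph{already} impossible and there is nothing left to rule out. The sentence ``the only work for the upper bound is to rule out $\im(P_6^2)=6$'' contradicts the inequality you derived one line earlier, and the entire edge-cut recursion that follows (which you yourself concede is not airtight) is redundant and should be deleted. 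Lemma~\ref{edgecut} is needed in this paper for $K_t\Box P_n$, where the degree bound overshoots by one; here it is not needed at all.

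Second, the lower bound is where all the content of the proposition lives, and you leave precisely the hard part unspecified: the two paths joining opposite arms, $(2,3)$--$(4,3)$ and $(3,2)$--$(3,4)$. Each must make a long detour through the boundary while avoiding every edge committed to the other eight paths, and phrases like ``through one of the extreme rows'' and ``a $(6,2)$-style detour'' are not a proof. Moreover the choices interact: your proposed route for $(2,3)$--$(3,4)$ through $(2,4),(2,5),(3,5)$ consumes the edge from $(2,5)$ to $(3,5)$, which is exactly an edge the natural boundary detour for $(2,3)$--$(4,3)$ (out along row $1$, down column $5$, back along row $5$) wants to use; the paper avoids this clash by routing $(2,3)$--$(3,4)$ through $(2,4)$ alone, as a path of length two. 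So the short routes and the long detours must be chosen together, not independently. Write out all ten paths explicitly and verify pairwise edge-disjointness (a finite check, as you say); until that edge-list is produced the lower bound is a plausible sketch rather than a proof.
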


\begin{proof}
Label consecutive vertices in the path $v_1, v_2, \ldots, v_6$. We use $(v_3, v_3)$ and its neighbors $(v_2, v_3), (v_3, v_2), (v_3, v_4), (v_4, v_3)$ as terminals in our immersion of $K_5$. Notice that, $(v_3, v_3)$ is connected by an edge to all of the other terminals. To connect the remaining pairs of terminals we use the following paths (see also Figure~\ref{P_6BoxP_6}).

$(v_2, v_3) - (v_2, v_2) - (v_3, v_2)$

$(v_3, v_2) - (v_4, v_2) - (v_4, v_3)$

$(v_4, v_3) - (v_4, v_4) - (v_3, v_4)$

$(v_3, v_4) - (v_2, v_4) - (v_2, v_3)$

$(v_2, v_3) - (v_1, v_3) - (v_1, v_4) - (v_1, v_5) - (v_2, v_5) - (v_3, v_5) - (v_4, v_5) - (v_5, v_5) - $

$\hspace{.2in}(v_5, v_4) - (v_5, v_3) - (v_4, v_3)$

$(v_3, v_2) - (v_3, v_1) - (v_4, v_1) - (v_5, v_1) - (v_6, v_1) - (v_6, v_2) - (v_6, v_3) - (v_6, v_4) - $

$\hspace{.2in}(v_6, v_5) - (v_6, v_6) - (v_5, v_6) -  (v_4, v_6) - (v_3, v_6) - (v_3, v_5) - (v_3, v_4)$

This completes the description of a $K_5$-immersion in $P_6^2$. Using Remark~\ref{Delta} and the fact that $\Delta(P_6^2)=4$ we can conclude $\im(P_6^2)=5$.
\end{proof}

 \begin{figure}[h!]
\centering
\includegraphics[height=5cm]{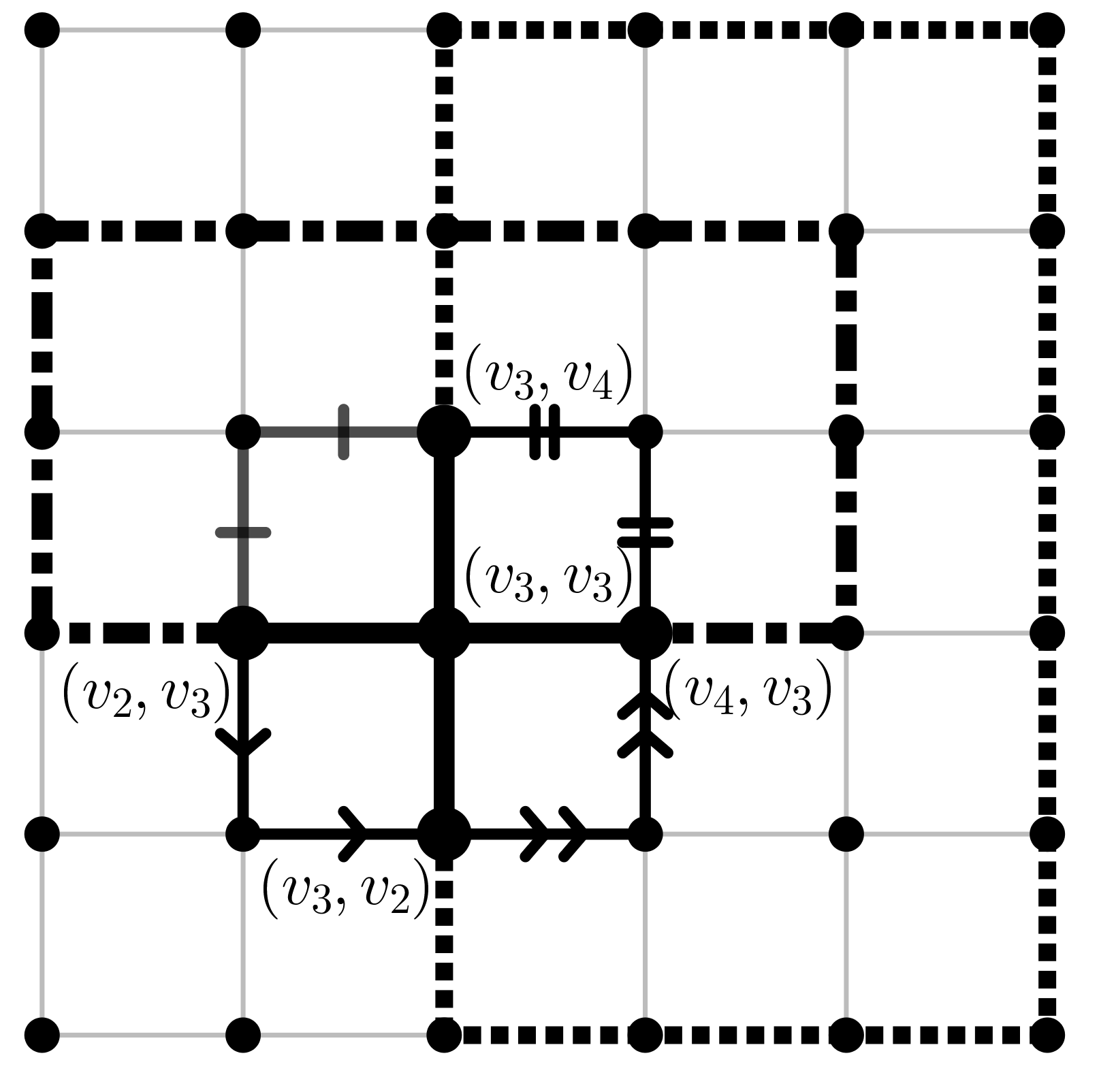}
\caption{An immersion of $K_5$ in $P_6^2$. Terminals are labeled and the edge-disjoint paths are highlighted.}
\label{P_6BoxP_6}
\end{figure}

\begin{corollary}\label{BoxPathd} Let $n\geq 6$ and $d>2$. Then $\im(P_n^d)=2d+1$.
\end{corollary}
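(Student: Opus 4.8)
\textbf{Proof proposal for Corollary~\ref{BoxPathd}.}

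The plan is to prove the two inequalities $\im(P_n^d) \leq 2d+1$ and $\im(P_n^d) \geq 2d+1$ separately, with the upper bound coming from an edge-cut argument and the lower bound from an explicit inductive construction that builds on Proposition~\ref{BoxPath6}.

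For the upper bound, I would argue that $\im(P_n^d) \leq 2d+1$ cannot be improved to $2d+2$ using Lemma~\ref{edgecut}, in the same spirit as the proof of Theorem~\ref{completeBoxpath}. The key observation is that $P_n^d = P_n^{d-1} \,\Box\, P_n$, and slicing along the last coordinate exhibits $P_n^d$ as a sequence of $n$ copies of $P_n^{d-1}$ joined by perfect matchings; hence there is an edge cut of size $|V(P_n^{d-1})| = n^{d-1}$ between the first copy and the rest. That bound is far too weak on its own, so instead I would use the right cuts: for each coordinate $i$ and each threshold, the set of vertices whose $i$-th coordinate is $v_1$ is separated from the rest by a matching of size $n^{d-1}$, and more usefully, one should count how many terminals can lie in a ``thin slab.'' The cleaner route is to induct: assuming $\im(P_n^{d-1}) = 2(d-1)+1 = 2d-1$ and supposing for contradiction that $P_n^d$ has a $K_{2d+2}$-immersion, look at the first slice (copy of $P_n^{d-1}$) in the last-coordinate decomposition that contains a terminal; if it contains $a$ terminals then the remaining $2d+2-a$ are cut off by a matching of size $n^{d-1}$, which is not restrictive. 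So the edge-cut must instead be taken within a single coordinate direction where copies are small — but $P_n^{d-1}$ is not small. I expect this is the main obstacle, and the honest approach is: use Remark~\ref{Delta}, $\Delta(P_n^d) = 2d$, to get $\im(P_n^d) \leq 2d+1$ directly. This is immediate and requires no cut argument at all, which is presumably why the corollary is phrased as it is; the path's interior vertices have degree exactly $2d$.

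For the lower bound $\im(P_n^d) \geq 2d+1$, I would proceed by induction on $d$, with base case $d=2$ supplied by Proposition~\ref{BoxPath6} (noting $n \geq 6$ and that for larger $n$ one uses the same central $6\times 6$ sub-box, since $P_6^2$ is a subgraph of $P_n^2$ and immersions are monotone under subgraphs). For the inductive step, write $P_n^d = P_n^{d-1} \,\Box\, P_n$ and use the global description of the Cartesian product: $P_n^d$ is $n$ copies of $P_n^{d-1}$ laid out along a path and joined by perfect matchings. By hypothesis the central copy (say the one at coordinate $v_3$ in the new direction, or more safely a copy far from both ends) contains a $K_{2d-1}$-immersion; I would take its $2d-1$ terminals together with two new terminals obtained by moving one fixed terminal one step in each direction along the new path coordinate — exactly mimicking the construction in Theorem~\ref{BoxPath}, which gives $\im(G \,\Box\, P_n) \geq t+2$ whenever $G$ is connected, $\im(G) = t$, and $G \neq K_t$. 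Since $P_n^{d-1}$ is connected and, for $d \geq 3$ and $n \geq 6$, is certainly not a complete graph, Theorem~\ref{BoxPath} applies directly with $G = P_n^{d-1}$ and $t = 2d-1$, yielding $\im(P_n^d) \geq (2d-1) + 2 = 2d+1$. Combined with the upper bound this gives equality.

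The one point needing care is the base case: Proposition~\ref{BoxPath6} is stated for $P_6^2$, and Corollary~\ref{BoxPathd} claims the result for all $n \geq 6$, so I would add one sentence observing that $P_6^d$ is a subgraph of $P_n^d$ for $n \geq 6$ (embed each path factor as six consecutive vertices), hence $\im(P_n^d) \geq \im(P_6^d)$, and run the induction with $n = 6$; the upper bound $2d+1$ holds for all $n$ by the maximum-degree argument. I do not expect genuine difficulty here — the real content is already in Proposition~\ref{BoxPath6} and Theorem~\ref{BoxPath}, and this corollary is essentially an induction that strings them together — so the write-up should be short.
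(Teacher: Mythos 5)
Your proposal is correct and follows essentially the same route as the paper: upper bound from $\Delta(P_n^d)=2d$ via Remark~\ref{Delta}, lower bound by induction on $d$ with base case Proposition~\ref{BoxPath6} and inductive step Theorem~\ref{BoxPath} applied to $P_6^{d-1}\,\Box\,P_6$, then the observation that $P_6^d$ is a subgraph of $P_n^d$ for $n\geq 6$. The extended digression about edge-cut arguments for the upper bound is unnecessary, as you yourself conclude; the maximum-degree bound is all that is needed and is what the paper uses.
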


\begin{proof}
First we prove that $\im(P_6^d)=2d+1$. Proposition~\ref{BoxPath6} shows $\im(P_6^2)=2(2)+1=5$.
Assume $\im(P_6^k)=2k+1$ for $k\geq 2$. When $d = k+1$, $P_6^{k+1} = P_6^{k}\Box P_6$, thus by Theorem~\ref{BoxPath} and induction, $\im(P_6^{k+1})\geq 2k + 1 + 2 = 2(k+1)+1$.
The maximum degree in $P_6^{k+1}$ is $2(k+1)$.
Therefore, $\im(P_6^{k+1})= 2(k+1)+1$.
Since $P_6^d$ is a subgraph of $P_n^d$ for $n\geq 6$ and $\Delta(P_n^d)=2d$, we have shown $\im(P_n^d)=2d+1$.
\end{proof}

As mentioned in the introduction, Kotolov~\cite{Ko01} and Chandran and Sivadasan~\cite{ChSi07} proved bounds for the Hadwiger number of products of the graphs discussed above. Let $G$ be a graph. The Hadwiger number of $G$, $h(G)$, is the maximum $m$ such that $G$ has a $K_m$-minor. For the $d$-dimensional hypercube, Kotlov proved $h(Q_d)\geq 2^{\frac{d+1}{2}}$ for $d$ odd and $h(Q_d)\geq 3\cdot2^{\frac{d-2}{2}}$ for $d$ even. Chandran and Sivadasan proved $h(Q_d)\leq 2^{\frac{d}{2}}\cdot\sqrt{d}+1$. These results contrast with the result in Remark~\ref{hypercube} where we show $\im(Q_d)=d+1$. Chandran and Sivadasan also showed $n^{\lfloor\frac{d-1}{2}\rfloor}\leq h(K_n^d)\leq h^{\frac{d+1}{2}}\cdot\sqrt{d}+1$ and $n^{\lfloor{\frac{d-1}{2}}\rfloor}\leq h(P_n^d)\leq h^{\frac{d}{2}}\cdot\sqrt{2d}+1$. These results contrast with the results in Remark~\ref{hamming} and Corollary~\ref{BoxPathd}, where we show $\im(K_n^d)=d(n-1)+1$ and $\im(P_n^d)=5$ respectively.

\section{Direct products}\label{Direct}
The bounds for the immersion numbers of the Cartesian and lexicographic products were relatively straightforward to prove. In this section we discuss the direct product. The structure of the direct product is quite different than the previous two products and leads to challenges in proving a bound for the immersion number in the general case.

We begin by conjecturing that the answer to Question~\ref{ques} is yes for the direct product.

\begin{conjecture}\label{direct} Let $G$ and $H$ be graphs where $im(G)=t$ and $im(H)=r$. Then $im(G\times H) \geq (t-1)(r-1)+1$.
\end{conjecture}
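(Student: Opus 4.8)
The plan is to carry out the terminal-choosing strategy described in Section~\ref{prelims}. Fix a $K_t$-immersion of $G$ with terminals $v_1,\dots,v_t$ and pairwise edge-disjoint connecting paths $P_{v_i,v_j}$, and a $K_r$-immersion of $H$ with terminals $u_1,\dots,u_r$ and pairwise edge-disjoint connecting paths $Q_{u_k,u_l}$. In $G\times H$ take as terminals the vertex $(v_1,u_1)$ together with all of its neighbours, i.e.\ the $(t-1)(r-1)$ vertices $(v_i,u_j)$ with $i\ge 2$ and $j\ge 2$; this gives the $(t-1)(r-1)+1$ terminals demanded by the bound, and by Remark~\ref{Delta} together with $\Delta(G\times H)=\Delta(G)\Delta(H)$ it is exactly optimal when $G=K_t$ and $H=K_r$.

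First I would set up the basic routing mechanism. A walk in $G\times H$ projects to a walk of the same length in each factor, and each edge of $G\times H$ lies ``over'' a pair $(e,f)\in E(G)\times E(H)$, with exactly two edges over each such pair. Given two terminals that differ in both coordinates, say $(v_i,u_k)$ and $(v_j,u_l)$ with $i\ne j$ and $k\ne l$, I would connect them inside the subgraph $P_{v_i,v_j}\times Q_{u_k,u_l}$ of $G\times H$: after padding the shorter of the two paths up to the length of the longer by traversing one of its edges back and forth --- which is possible precisely when $\mathrm{len}(P_{v_i,v_j})\equiv\mathrm{len}(Q_{u_k,u_l})\pmod 2$ --- this subgraph contains the desired path, and in fact two edge-disjoint ones realising the two ``diagonals'' $\{(v_i,u_k),(v_j,u_l)\}$ and $\{(v_i,u_l),(v_j,u_k)\}$ at once. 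The point that makes this fit together is that distinct unordered pairs $\{v_i,v_j\}$ use edge-disjoint $P$'s and distinct pairs $\{u_k,u_l\}$ use edge-disjoint $Q$'s, so any two gadgets built from different $(P,Q)$-pairs are automatically edge-disjoint; the pairs $\{(v_1,u_1),(v_i,u_j)\}$ involving the hub are of this same ``differ in both coordinates'' type and are handled identically.

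What remains is to connect terminals that agree in one coordinate, e.g.\ $(v_i,u_k)$ and $(v_i,u_l)$. For these I would route through an auxiliary vertex --- a third terminal, or $(v_1,u_1)$, or a carefully chosen peg --- again keeping each segment over a product of immersion paths, or of a single edge of one factor with an immersion path of the other. The delicate point is that many of the product gadgets above are already used to capacity, so the auxiliary vertices must be assigned globally and coherently; I would organise this through proper edge colourings of $K_t$ and of $K_r$, in the spirit of the colouring argument in the proof of Theorem~\ref{lex}. This should be routine when $G$ and $H$ are not complete, where spare edges and vertices leave room, and in the tight case $G=K_t$, $H=K_r$ it is cleanest to isolate a hands-on argument as a separate lemma computing $\im(K_t\times K_r)$.

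The main obstacle is parity. Lengthening a walk inside a fixed immersion path changes its length only by~$2$, so everything above needs $\mathrm{len}(P_{v_i,v_j})\equiv\mathrm{len}(Q_{u_k,u_l})\pmod 2$ for the relevant pairs, and nothing forces this. I would therefore first prove the conjecture under the hypothesis that $G$ has a $K_t$-immersion all of whose paths have one fixed parity and $H$ has a $K_r$-immersion all of whose paths have the same fixed parity, so that every $P$ is length-compatible with every $Q$; with that hypothesis the scheme above should go through. Removing the hypothesis is the hard part: one would either flip the parity of an offending immersion path by re-routing it through a spare edge or through an odd cycle of $G$, or exploit that when $H$ contains a $K_r$ with $r\ge 3$ there are $u_k$-$u_l$ walks of both parities, which already makes that factor length-compatible with an arbitrary immersion of $G$. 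I expect that closing the gap in full generality will come down to controlling which parities are attainable by an optimal clique immersion, and that is exactly where the techniques of this section stop.
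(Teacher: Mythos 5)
The statement you are trying to prove is stated in the paper as a conjecture and is left open there; the paper only establishes it in special cases. So no complete proof exists in the paper for you to match, and your proposal --- which candidly stops at the same obstruction --- cannot be judged as a correct proof of the general statement. What you have written is, however, an accurate reconstruction of the paper's program of partial results, following essentially the same route: the terminals are $(v_1,u_1)$ together with the $(t-1)(r-1)$ vertices $(v_i,u_j)$, $i,j\ge 2$ (its ``neighbours'' in the sense of $K_t\times K_r$; note they need not be actual neighbours in $G\times H$ when the factors are not complete); the tight case $G=K_t$, $H=K_r$ is isolated as a hands-on lemma using edge-colourings of cliques (Theorem~\ref{KtTimesKr}); the case $K_r\subseteq H$ with $r\ge 3$ is handled by exploiting that a triangle supplies walks of both parities, which the paper implements via an idempotent Latin square (Theorem~\ref{GtimesKr}); and the uniform-parity hypothesis you propose is exactly Theorem~\ref{DirectParity}, with the one-sided even-parity strengthening in Theorem~\ref{GTimesParity}. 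Your diagonal-pairing observation --- that the two paths over a fixed pair $(P_{v_i,v_j},Q_{u_k,u_l})$ realising $\{(v_i,u_k),(v_j,u_l)\}$ and $\{(v_i,u_l),(v_j,u_k)\}$ are automatically edge-disjoint --- is the same parity-of-subscript-sums argument the paper uses.

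The genuine gap, which you correctly identify but do not close (and neither does the paper), is the mixed-parity case. It is worth being precise about why it is not merely technical: when $G$ and $H$ are both bipartite, $G\times H$ disconnects, and the candidate terminal set $(v_1,u_1)\cup\{(v_i,u_j)\}$ can be split across components, so no amount of re-routing within this terminal choice can work --- the terminal-selection strategy itself fails, not just the path construction (see the discussion around Figure~\ref{BipartiteEx}). Your suggested fixes (flipping the parity of an offending path through a spare edge or odd cycle) would need to handle exactly this situation, where no odd closed walk exists in either factor, and that is where a genuinely new idea is required.
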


The global definition for the direct product of graphs $G$ and $H$ is to form $G\times H$ from a copy of $G$ by replacing each vertex in $G$ with an edgeless copy of $H$, and replacing each edge in $G$ with a set of edges joining vertices $h, h'$ in the two different copies of $H$ when $hh'\in E(H)$. Since the direct product is commutative, we may switch the roles of $G$ and $H$ without changing the results. In this section we present evidence towards the proof of Conjecture~\ref{direct}. In Section~\ref{DirectComplete}, we consider cases where the graphs are complete or have a subgraph of a complete graph of the same size as the immersion number. In Section~\ref{PegParity}, we consider cases involving the parity of the number of pegs on each path in an immersion.

\subsection{Direct products of complete graphs} \label{DirectComplete}
We begin by proving the immersion number for the direct product of two complete graphs.

\begin{theorem}\label{KtTimesKr} $\im(K_t\times K_r) = (t-1)(r-1)+1.$
\end{theorem}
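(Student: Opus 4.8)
The plan is to prove the two inequalities separately. The upper bound $\im(K_t\times K_r)\leq (t-1)(r-1)+1$ is immediate from Proposition~\ref{MaxProducts}(3), since $\Delta(K_t)=t-1$ and $\Delta(K_r)=r-1$ give $\im(K_t\times K_r)\leq (t-1)(r-1)+1$. So the real work is the lower bound: exhibiting a $K_{(t-1)(r-1)+1}$-immersion in $K_t\times K_r$. Following the general terminal-selection strategy described in the Preliminaries, I would label $V(K_t)=\{v_1,\dots,v_t\}$ and $V(K_r)=\{u_1,\dots,u_r\}$, fix the vertex $(v_1,u_1)$, and take as terminals $(v_1,u_1)$ together with all $(t-1)(r-1)$ of its neighbors in $K_t\times K_r$, namely the vertices $(v_i,u_j)$ with $2\leq i\leq t$ and $2\leq j\leq r$. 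This is exactly $(t-1)(r-1)+1$ terminals, so the target clique has the right size.

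The key step is to construct edge-disjoint paths between every pair of these terminals. Pairs of the form $\{(v_1,u_1),(v_i,u_j)\}$ are joined by the single edge between them, which uses no shared edges with anything else. For two "outer" terminals $(v_i,u_j)$ and $(v_{i'},u_{j'})$ I would split into cases: if $i\neq i'$ and $j\neq j'$ they are adjacent in $K_t\times K_r$ and can be joined by their connecting edge; if they share exactly one coordinate (say $i=i'$, $j\neq j'$, so they are non-adjacent in the direct product) I need a genuine path of length two or three routed through a vertex with the complementary coordinate, e.g. go $(v_i,u_j)\to (v_{i'},u_{j})\to (v_i,u_{j'})$ for a suitable third index $i'\neq i$, or route through $(v_1,u_1)$-adjacent vertices — the point is that the "grid" $\{(v_i,u_j): 2\leq i\leq t,\ 2\leq j\leq r\}$ behaves like $K_{t-1}\times K_{t-1}$-ish structure plus the apex, and there is enough room (vertices $(v_i,u_1)$ and $(v_1,u_j)$ for $i,j\geq 2$ serve as pegs) to keep everything edge-disjoint. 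I would organize this as a careful edge-count/assignment argument, perhaps decomposing the needed length-2 paths so that each uses a distinct peg or a distinct edge incident to a peg, analogous to the proper-edge-coloring trick used in the proof of Theorem~\ref{lex}.

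I expect the main obstacle to be the bookkeeping that guarantees edge-disjointness among the length-$2$ (and occasional length-$3$) connector paths: there are $\binom{(t-1)(r-1)}{2}$ pairs of outer terminals, many of them non-adjacent, and each non-adjacent pair needs an internally-disjoint path, so one must show the available pegs — the vertices $(v_i,u_1)$, $(v_1,u_j)$, and the direct-product edges among the $(v_i,u_j)$ themselves — suffice without collision. The cleanest route is probably to first handle the pairs differing in one coordinate using the ``extra'' vertices in that coordinate's fiber, then verify a degree/edge inequality showing this is feasible; alternatively, one can invoke a known complete-graph-immersion packing result for $K_t\times K_r$ directly. I would aim to present an explicit routing scheme and then check edge-disjointness coordinate-block by coordinate-block.
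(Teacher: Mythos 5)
Your setup matches the paper exactly: the upper bound via Proposition~\ref{MaxProducts}, and terminals $(v_1,u_1)$ together with its $(t-1)(r-1)$ neighbors. But the heart of the proof --- actually routing edge-disjoint paths between the non-adjacent pairs of outer terminals --- is precisely what you defer with ``I would organize this as a careful edge-count/assignment argument,'' and that deferral hides both a concrete error and the real content of the theorem. The error first: for two terminals $(v_i,u_j)$ and $(v_i,u_{j'})$ sharing a first coordinate, you propose the intermediate step $(v_i,u_j)\to(v_{i'},u_j)$. In the direct product two vertices are adjacent only if \emph{both} coordinates change, so $(v_i,u_j)$ and $(v_{i'},u_j)$ are not adjacent and this is not a walk. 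The only usable length-$2$ connectors for a pair sharing the first coordinate $v_i$ are of the form $(v_i,u_j)-(v_1,u_b)-(v_i,u_{j'})$ with $b\neq j,j'$ (and symmetrically through $(v_a,u_1)$ for pairs sharing the second coordinate), exactly the fibers you mention in passing.

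The missing idea is how to choose those pegs $b$ (resp.\ $a$) globally so that no edge is reused. The paper does this by observing that the non-adjacent terminal pairs form $r-1$ cliques $K_{t-1}$ and $t-1$ cliques $K_{r-1}$, properly edge-colouring each clique, and sending the edge coloured $b$ in the $i$th copy of $K_{r-1}$ through the peg $(v_1,u_b)$; since a colour class is a matching, each edge at $(v_1,u_b)$ is used at most once. This forces a genuine parity case analysis: when $t-1$ (or $r-1$) is even, any proper edge-colouring of that clique needs a colour that collides with the already-used edges at $(v_1,u_1)$, and the paper must replace those length-$2$ paths by explicit length-$4$ detours through $(v_i,u_1)$ (its Case 2), with a further reassignment when $t$ and $r$ are both odd (its Case 3). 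None of this is routine bookkeeping that follows from a degree count --- a naive greedy assignment of pegs can and does collide --- so as written your proposal has a real gap where the proof's main construction should be.
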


\begin{proof}  Observe that when $t=r=2$, $\im(K_2\times K_2) = 2$ since $K_2\times K_2$ is two disjoint edges.

We now consider the case when at least one of $t$ or $r$ is greater than $2$. By Proposition~\ref{MaxProducts}, $\im(K_t\times K_r)\leq (t-1)(r-1)+1$. To complete the proof we define a $K_{(t-1)(r-1)+1}$-immersion in $K_t\times K_r$.

 Label the vertices of each complete graph $1, 2, \ldots, t$ or $r$. The $(t-1)(r-1)+1$ terminals of our clique immersion will be $(1,1)$ and all its neighbors. Let $N\left[(1,1)\right]=\{(i,j)\;|\; 2\leq i\leq t, 2\leq j\leq r\}$, the neighbors of $(1,1)$. Some pairs of these terminals are adjacent in $K_t\times K_r$; in that case we use the edge between them for the immersion. It remains to define a path between each pair of vertices in $N\left[(1,1)\right]$ that share a first coordinate or a second coordinate.

Define a graph $S$ with vertex set $N\left[(1,1)\right]$ where two vertices are adjacent if and only if they are not adjacent in $K_t\times K_r$. Note that a vertex $(x,y)\in S$ is part of one clique of size $r-1$, namely the clique with vertex set $\{(x, j): 2\leq j\leq r \}$, and part of one clique of size $t-1$, namely the clique with vertex set $\{(i, y): 2\leq i\leq t \}$; $(x,y)$ has no other adjacencies beyond these two cliques. Hence the edges of $S$ can be partitioned into $t-1$ copies of $K_{r-1}$ (one corresponding to each $i\in\{2, 3, \ldots, t\}$ in the first slot, which we call the $i$th copy of $K_{r-1}$) and $r-1$ copies of $K_{t-1}$ (one corresponding to each $j\in\{2, 3, \ldots, r\}$ in the second slot, which we call the $j$th copy of $K_{t-1}$).
In particular, $S$ is isomorphic to $K_{t-1}\Box K_{r-1}$.

For each edge in $S$, we must define a path in $K_t\times K_r$ between its endpoints. To do this we shall rely on edge-colorings of cliques, and associate colored edges in $S$ with particular paths to use in $K_t\times K_r$. A complete graph $K_{n}$ has maximum degree $n-1$ and so is $n$-edge-colorable by Vizing's Theorem. In the case that $n$ is odd and an $n$-edge-coloring using the colors $1, 2, \ldots n$ has been assigned to $K_n$, observe that each of these $n$ colors is missing at exactly one vertex. Remove the vertex that is missing an edge colored 1, and label the other vertices $2, 3, \ldots, n$ according to the color of its removed edge. We are now left with an $n$-edge-coloring of $K_{n-1}$ (an even clique) in which every vertex sees the color 1, and every other color is missing at exactly two vertices. In particular, this means that every vertex is missing exactly two of the colors $2, \ldots, n$, exactly one of which is its vertex label. We shall refer to this particular edge-coloring and vertex-labelling as our \emph{even clique assignment}; see Figure \ref{EvenCliqueAsst}. Given a copy of $K_{n-1}$ where $n$ is even (so $K_{n-1}$ is an odd clique), consider the $(n-1)$-edge-coloring of $K_{n-1}$ using the colors $2, 3, \ldots, n$. Each of these colors will be missing at exactly one vertex; consider each vertex to be labelled with its missing color. We shall refer to this particular edge-coloring and vertex-labelling as our \emph{odd clique assignment}; see Figure \ref{OddCliqueAsst}.

\begin{figure}
\centering
\includegraphics[height=5.5cm]{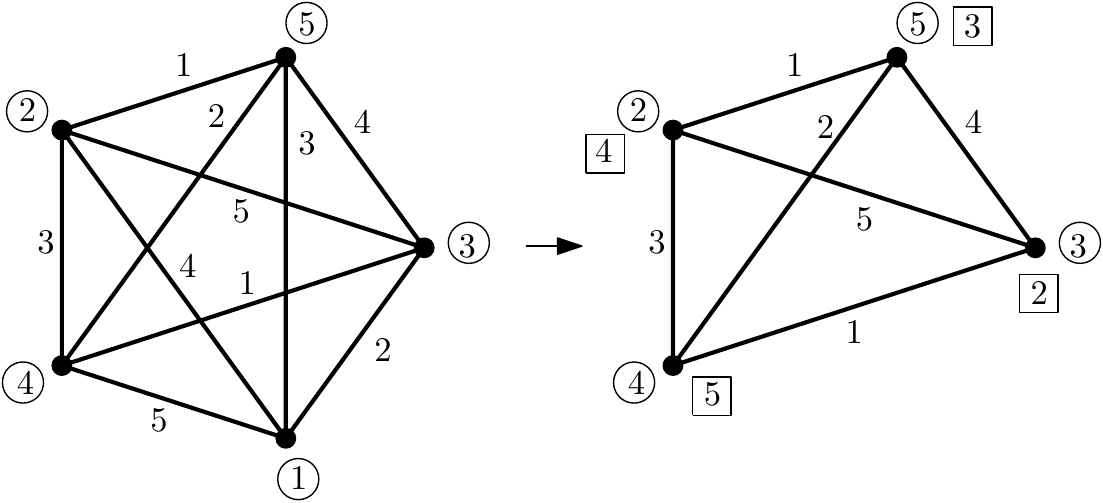}
\caption{An \emph{even clique assignment} for $K_4$. The circled colors are missing at the indicated vertices; the boxed colors are also missing but additionally serve as vertex-labels.}
\label{EvenCliqueAsst}
\end{figure}

\underline{Case 1:} $t, r$ are both even.

In this case, $K_{t-1}$ and $K_{r-1}$ are both odd cliques, and we use our odd-clique assignment on each of our $r$ copies of $K_{t-1}$ and each of our $t$ copies of $K_{r-1}$. We do this in such a way that vertex $(i,j)$ in $S$ is labelled $i$ in its copy of $K_{t-1}$ and labelled $j$ in its copy of $K_{r-1}$.

Suppose the color of the edge $(i,j) - (k,j)$ is $a$. Then we choose the path between these vertices to be
\[(i,j) -(a,1) -(k,j) .\]
Note that this is indeed a path in $K_t\times K_r$, as $a\neq i, k$ and $j\neq 1$. Since  $a\neq 1$, we are not using any of the (already used) edges incident to $(1, 1)$. Moreover, the edges in the $j$th-copy of $K_{t-1}$ labelled $a$ form a matching, so these paths use each edge incident to $(a,1)$ at most once.

Similarly, suppose the color of the edge $(i,j)-(i,k)$ is $b$. Then $b\neq 1, j, k$ and $i\neq 1$, and we choose the path from $(i, j)$ to $(i,k)$ to be
\[(i,j) -(1,b) -(i,k) .\]
The edges in the $i$th copy of $K_{r-1}$ labelled $b$ form a matching, so we use each edge incident to $(1,b)$ at most once. These edges are disjoint from the edges incident to $(a, 1)$ because $a$ and $b$ are not 1. This completes the description of our desired immersion in $K_t\times K_r$.

\begin{figure}
\centering
\includegraphics[height=5.5cm]{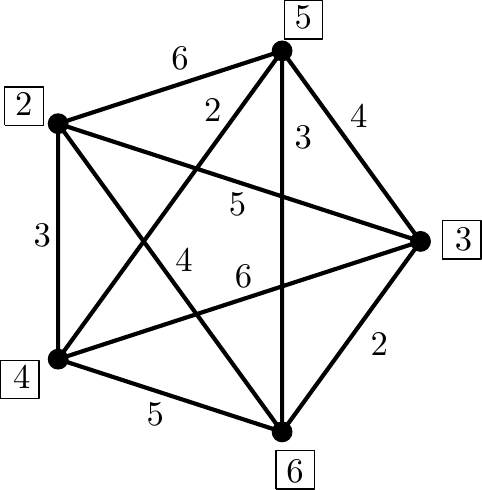}
\caption{An \emph{odd clique assignment} for $K_5$. The boxed colors are missing and additionally serve as vertex-labels.}
\label{OddCliqueAsst}
\end{figure}

\underline{Case 2:} exactly one of $t,r$ is odd.

Suppose, without loss of generality, that $t$ is even and $r$ is odd. In this case, $K_{t-1}$ is an odd clique while $K_{r-1}$ is an even clique. We use our odd-clique assignment on each of our $r$ copies of $K_{t-1}$ and our even-clique assignment on each of our $t$ copies of $K_{r-1}$. We do this in such a way that vertex $(i,j)$ in $H$ is labelled $i$ in its copy of $K_{t-1}$ and labelled $j$ in its copy of $K_{r-1}$.

For edges in copies of $K_{t-1}$, we do the path-assignment according to colors exactly as in Case 1.

Consider now an edge $(i,j)-(i,k)$ in the $i$th copy of $K_{r-1}$, and suppose it has color $b$. If $b\neq 1$, we define the path as before, namely
\[(i,j) -(1,b) -(i,k) .\]
In the case $b=1$ however, we must proceed differently as all edges incident to $(1,1)$ have already been used. In this case, we look more closely at this copy of $K_{r-1}$. The vertex $(i,j)$ is missing exactly two colors in this copy, namely $j$ and a second color $c\neq 1$. The vertex $(i,k)$ is missing $k$ and a second color $d\neq 1$. We choose the path between these vertices to be
$$(i,j)-(1, c) -(i,1)-(1,d) -(i,k).$$
We haven't used the edges $(i,j) - (1,c) $ or $ (1,d)-(i,k)$ in the first step (dealing with edges not colored 1 in the $K_{r-1}$), because $c$ is missing at $(i,j)$ and $d$ is missing at $(i,k)$. We haven't used the edges $(1,c)-(i,1)$ or $(i,1)-(1,d)$ in the first step because $j,k\neq 1$.  Moreover, these new paths do not overlap any of the edges used for our paths from the $K_{t-1}$'s (i.e. paths between vertices in one of the copies of $K_{t-1}$), because those edges were all of form $(x,y)- (a,1)$ where $x\neq 1$. This completes the description of our desired immersion in $K_t\times K_r$.

\underline{Case 3}: $t,r$ are both odd.

Let $t'=t-1$, so $t'$ is even. Define $S'$ to be the subgraph of $S$ obtained by deleting the vertices with $t$ in the first coordinate. Apply Case 2 to the pair $t', r$ to get paths corresponding to every edge in $S'$. Of these paths, we will change only the longest ones, that is, the paths of length 4. The paths of length 4 in Case 2 occur between vertices $(i,j)$ and $(i,k)$ when the color $b$ on the edge $(i,j)-(i,k)$ is 1. We will replace each such path with
$$(i,j)-(t,1)-(i,k).$$
Since $j,k\neq 1$ and since $t\neq i$, this is indeed a path. Since the paths we are replacing correspond to a matching (in fact a perfect matching in the $i$th copy of $K_{r-1}$) and since $t$ is a completely new value, these edges have not yet been used in the immersion.

It remains now to define paths between pairs of vertices in which at least one vertex has $t$ in the first coordinate. We will do this based on the edge-coloring of $S'$.

In particular, for a vertex $(t, j)$, $2\leq j \leq r$, we must define paths to it's neighbors in $S'$ and to the other vertices with first coordinate $t$. Let $2\leq i\leq t-1$.

For the first type of path, we use
$$(t,j)-(i,1)-(1,c)-(i,j),$$
where $c$ is the color missing at $(i,j)$ (in addition to $j$) in the $i$th copy of $K_{r-1}$. Note that the path $(i,1)-(1,c)-(i,j)$ was precisely one half of the length 4 path between $(i,j)$ and $(i,k)$ that we deleted. Hence these edges are indeed available and form a path (note the other half of this length four path will be used to join $(t, j)$ to $(i,k)$). The first edge of the path, $(t, j) - (i, 1)$ is an edge because $t\neq i$ and $j\neq 1$.

We must now define paths between each pair of vertices with $t$ in the first coordinate.

We applied Case 2 to $S'$, this means each copy of $K_{r-1}$ in $S'$ has the same fixed coloring of it's edges. Let $b$ be the color of the edge between $j$ and $k$ in $K_{r-1}$. If $b\neq 1$
then we define the path
\[(t,j) -(1,b) -(t,k) .\]
Note that, no edges of form $(t, x), (1,y)$ with $x,y\neq 1$ have been previously used.

If $b=1$, we must proceed differently, as all edges incident to $(1,1)$ have already been used. In this case, we look more closely at the edge-coloring of $K_{r-1}$. Each vertex, $j$, in $K_{r-1}$ is missing exactly two colors, namely $j$ and a second color $c\neq 1$. Another vertex $k$ is missing $k$ and a second color $d\neq 1$. We choose the path to be
$$(t,j)-(1, c) -(t,1)-(1,d) -(t,k).$$
The edge $(t,j)-(1,c)$ or $(1,d)-(t,k)$ have not previously been used because $c$ is missing at $j$ and $d$ is missing at $k$ in the copy of $K_{r-1}$. We previously used edges incident to $(t,1)$ in paths of form $(i,j)-(t,1)-(i,k)$, but there we know that $i\neq 1$, so we are not re-using any edges from those paths.
This completes the description of our desired immersion in $K_t\times K_r$.
\end{proof}

The following corollary follows directly from Theorem~\ref{KtTimesKr}.

\begin{corollary}\label{CompleteSubgraphTimes} Let $G$ and $H$ be graphs with $\im(G)=t$ and $\im(H)=r$, and suppose that $K_t$ is a subgraph of $G$ and $K_r$ is a subgraph of $H$. Then $\im(G\times H) \geq (t-1)(r-1)+1.$
\end{corollary}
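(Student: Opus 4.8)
The plan is to reduce directly to Theorem~\ref{KtTimesKr} via two monotonicity observations, so the argument will be essentially two lines. First I would record that the direct product respects the subgraph order: if $K_t$ is a subgraph of $G$ and $K_r$ is a subgraph of $H$, then $K_t\times K_r$ is a subgraph of $G\times H$. Indeed, its vertex set $V(K_t)\times V(K_r)$ sits inside $V(G)\times V(H)$, and every edge $(g,h)(g',h')$ of $K_t\times K_r$ satisfies $gg'\in E(K_t)\subseteq E(G)$ and $hh'\in E(K_r)\subseteq E(H)$, hence is an edge of $G\times H$.

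Second I would note that $\im$ is monotone under taking subgraphs: if $A$ is a subgraph of $B$, then any $K_m$-immersion in $A$ is in particular a $K_m$-immersion in $B$ (a subgraph of $A$ on which one performs split-offs is also a subgraph of $B$, or, in the terminal/edge-disjoint-paths formulation of Section~\ref{prelims}, the same terminals and paths witness the immersion in $B$). Hence $\im(A)\le\im(B)$.

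Combining these two facts with Theorem~\ref{KtTimesKr} yields
\[
\im(G\times H)\ \ge\ \im(K_t\times K_r)\ =\ (t-1)(r-1)+1,
\]
which is the claim.

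There is no genuine obstacle here; the only point that deserves to be stated carefully is the first observation, namely that the direct product is monotone with respect to the subgraph relation. It is immediate from the definition, but it is precisely the ingredient that lets the explicit complete-graph immersion constructed in the proof of Theorem~\ref{KtTimesKr} be transported into the general product $G\times H$.
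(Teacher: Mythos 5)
Your proposal is correct and matches the paper's argument: the paper likewise restricts to the complete subgraphs, i.e.\ uses the $K_{(t-1)(r-1)+1}$-immersion of Theorem~\ref{KtTimesKr} inside the subgraph $K_t\times K_r$ of $G\times H$. You have simply spelled out the two monotonicity facts that the paper leaves implicit.
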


\begin{proof} Define a $K_{(t-1)(r-1)+1}$-immersion in $G\times H$ using only the complete subgraphs and Theorem~\ref{KtTimesKr}. Therefore $\im(G\times H) \geq (t-1)(r-1)+1.$
\end{proof}

We now prove the conjecture for a general graph $G$ and a graph that contains $K_r$ as a subgraph. Here $r\geq 3$ since we use an $r\times r$ idempotent Latin Square in our proof of Case 3 and there is no $2\times 2$ idempotent Latin Square.

\begin{theorem} \label{GtimesKr} Let $G$ and $H$ be graphs with $\im(G)=t$ and $\im(H)=r$ where $r\geq 3$, and suppose $K_r$ is a subgraph of $H$. Then  $\im(G\times H) \geq (t-1)(r-1)+1$.
\end{theorem}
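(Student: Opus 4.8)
The plan is to reduce at once to the case $H=K_r$: since $K_r$ is a subgraph of $H$, the graph $G\times K_r$ is a subgraph of $G\times H$, so it suffices to prove $\im(G\times K_r)\ge (t-1)(r-1)+1$. Fix a $K_t$-immersion in $G$ with terminals $v_1,\dots,v_t$ and pairwise edge-disjoint connecting paths $P_{v_i,v_k}$; write $p_{ik}$ for the length of $P_{v_i,v_k}$ and label $V(K_r)=\{1,\dots,r\}$. As the terminals of the desired $K_{(t-1)(r-1)+1}$-immersion take the vertex $(v_1,1)$ together with the $(t-1)(r-1)$ vertices $(v_i,j)$, $2\le i\le t$, $2\le j\le r$ (the image in $G\times K_r$ of the closed neighbourhood of $(v_1,1)$ in the model $K_t\times K_r$). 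Two elementary observations drive the construction: first, since $r\ge 3$ and $K_r$ contains a triangle, $K_r$ has a walk of every length $\ell\ge 1$ between any two distinct vertices; second, if $Q$ is a \emph{path} in $G$ and $W$ a walk of the same length in $K_r$, then the diagonal product of $Q$ and $W$ is a walk in $G\times K_r$ joining the corresponding endpoint pairs, and it uses a well-defined edge of $K_r$ on each edge of $Q$. It is also worth recalling that pairwise edge-disjoint \emph{walks} between the images of adjacent vertices already certify an immersion, since shortcutting each walk to a path preserves edge-disjointness.

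With this in hand, I would lift the $K_{(t-1)(r-1)+1}$-immersion of $K_t\times K_r$ provided by Theorem~\ref{KtTimesKr}, which uses exactly this terminal set. Every connecting path there is a short walk whose steps are edges of $K_t\times K_r$, each lying ``over'' a specific edge $v_iv_k$ of the $K_t$-model, and (crucially) the two second coordinates of every such step differ. Replace each step over $v_iv_k$ by the diagonal product of $P_{v_i,v_k}$ with a length-$p_{ik}$ walk in $K_r$ joining the two relevant (distinct) second coordinates, which exists by the first observation; when $p_{ik}=1$ this replacement is trivial and returns an edge of $G\times K_r$. The ``same second coordinate'' routes of Theorem~\ref{KtTimesKr} pass through a peg with first coordinate $v_a$, $a\ne i,k$, so their lift concatenates diagonal products of $P_{v_i,v_a}$ and $P_{v_a,v_k}$ and never needs a closed walk in $K_r$. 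The only configuration with no analogue in $K_t$ is a ``same first coordinate'' pair $(v_i,j),(v_i,l)$: for this I would use a length-two detour $(v_i,j)-(w,s)-(v_i,l)$ through a $G$-neighbour $w$ of $v_i$ and a symbol $s\notin\{j,l\}$ (mirroring the $(v_1,b)$-pegs in Theorem~\ref{KtTimesKr}), falling back to an out-and-back traversal of $P_{v_1,v_i}$ when necessary, with the two traversals given different $K_r$-coordinates.

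The heart of the matter is edge-disjointness in $G\times K_r$. Steps lying over \emph{distinct} edges of the $K_t$-model automatically use disjoint edges of $G\times K_r$ because the paths $P_{v_i,v_k}$ are pairwise edge-disjoint in $G$; so it suffices to arrange, for each fixed edge $v_iv_k$ of the model, that the $K_r$-walks placed along $P_{v_i,v_k}$ (together with the back-and-forth traversals in the same-first-coordinate case) are mutually \emph{position-edge-disjoint} --- i.e.\ at each position along the path their $K_r$-edges are distinct. This is automatic when $p_{ik}=1$, and when $p_{ik}\ge 2$ there is enough slack at each position to achieve it; as in the proof of Theorem~\ref{KtTimesKr} one organizes the choice of these $K_r$-walks, and of the length-two detours $j\to L(j,l)\to l$ forced whenever a peg coordinate pushes us off the edges incident with $(v_1,1)$, using an idempotent Latin square $L$ of order $r$, which exists precisely because $r\ge 3$, and the construction splits into cases according to the parities of $t$ and $r$ (governing whether $K_{t-1}$ and $K_{r-1}$ are odd or even cliques and hence which clique assignment is used), the both-odd case being the one that genuinely requires the idempotent Latin square. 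I expect the main obstacle to be exactly this global bookkeeping: certifying that one coherent choice of $K_r$-walks simultaneously keeps position-edge-disjoint all the walks forced through each fanned-out $P_{v_i,v_k}$ along with the out-and-back traversals, which comes down to checking that the number of walks forced through any one $P_{v_i,v_k}$ never exceeds what the $K_r$-edges available at each position can carry; everything else is a fairly routine extension of the $K_t\times K_r$ argument.
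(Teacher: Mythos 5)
Your proposal follows essentially the same route as the paper's proof: reduce to $H=K_r$, keep the terminal set and routes from Theorem~\ref{KtTimesKr}, and lift each edge $(v_i,m)-(v_j,n)$ of those routes to a diagonal walk over $P_{v_i,v_j}$ paired with an equal-length walk in $K_r$ from $m$ to $n$, chosen by parity (plain $m/n$ alternation when the number of pegs is even; a third coordinate supplied by a proper edge-coloring of $K_r$ when $r$ is odd, or by an idempotent Latin square when $r$ is even, when the number of pegs is odd). The ``routine bookkeeping'' you defer is exactly the paper's three-case analysis, and be aware there is no genuine slack --- in the worst case all $r(r-1)$ product edges lying over a single edge of $P_{v_i,v_j}$ are needed, so the position-wise assignment must be injective, which is precisely what the coloring/Latin-square choices are engineered to guarantee.
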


\begin{proof}
Note that, it suffices to prove the theorem for $H=K_r$. Let $v_1, v_2, \ldots, v_t$ be the terminals of a $K_t$-immersion in $G$, and let the vertices of $H=K_r$ be $1,2,\ldots, r$. The $(t-1)(r-1)+1$ terminals of our clique immersion will be $(v_1, 1)$ and $(v_2, k), (v_3, k), \ldots, (v_t, k)$ for each $k\in \{2, 3, \ldots, r\}$.

We use the same plan for routes between terminals as in the proof of Theorem~\ref{KtTimesKr}, where each vertex $v_i$ replaces $i$ for $1\leq i\leq t$. However, vertices that were adjacent in $K_t$ may now be connected by a path. In order to complete the immersion, we need to show how to replace each edge in any route used in the proof of Theorem~\ref{KtTimesKr} by a path.

Consider $(v_i,m)-(v_j,n)$ where $i<j$ and $m\neq n$. We want to describe a path in $G\times K_r$ from $(v_i,m)$ to $(v_j,n)$. If $v_i$ is adjacent to $v_j$ in $G$, then these two vertices are adjacent. Otherwise, let $P_{i,j}$ be the path in $G$ between $v_i$ and $v_j$ in a fixed $K_t$-immersion. Let $P_{i,j} = v_i - p_1 - p_2 - p_3 - \cdots - p_a- v_j$.

\underline{Case 1:} The number of pegs, $a$, is even.

Then we use the route
\[(v_i, m) - (p_1,n)-(p_2,m)-(p_3,n) - \cdots - (p_{a-1}, n)- (p_a, m)- (v_j, n)\]

These paths will be edge-disjoint because the paths $P_{i,j}$ in $G$ are edge-disjoint and we alternate between the $m$th copy and the $n$th copy of $K_r$.

\underline{Case 2:} The number of pegs, $a$, is odd and $K_r$ is an odd clique.

Consider an edge coloring of $K_r$ using $r$ colors in which the color $k$ is missing at vertex $k$. This is possible because $K_r$ is an odd clique. In this coloring, suppose the color on the edge $mn$ is $\ell$, this means $\ell\neq m, n$. Then we use the route
\[(v_i, m) - (p_1,\ell)-(p_2,m)-(p_3,\ell) - \cdots - (p_{a-1}, m)- (p_a, \ell)- (v_j, n)\]

These paths will be edge-disjoint because the paths $P_{i,j}$ in $G$ are edge-disjoint and the color $\ell$ is missing at vertices $m$ and $n$ in $K_r$.

\underline{Case 3:} The number of pegs, $a$, is odd and $K_r$ is an even clique.

Let $A$ be a $r\times r$ idempotent Latin square, that is, a Latin square in which $a_{hh}=h$. Consider a copy of $K_r$ in which every edge is replaced with a directed $2$-cycle. We use $A$ to color this digraph. color the directed edge $hk$ with $a_{hk}$.
 Suppose $\ell$ is the color on the directed edge $mn$. Since $A$ is an idempotent Latin square, $\ell\neq m, n$. Then we use the route
\[(v_i, m) - (p_1,\ell)-(p_2,m)-(p_3,\ell) - \cdots - (p_{a-1}, m)- (p_a, \ell)- (v_j, n)\]

Using a Latin square insures that each out-edge at a vertex is a different color and each in-edge at a vertex is a different color because the colors do not repeat in a row or column. This, combined with the fact that the $P_{i,j}$ are edge disjoint in $G$, means these paths will be edge-disjoint.
\end{proof}

The proof technique from Theorem~\ref{GtimesKr} does not work for $r=2$ because in this case we may not be able to choose $(v_1, 1), (v_2, 2), (v_3, 2), \ldots, (v_t, 2)$ as our terminals. For example, consider the graph in Figure~\ref{GcrossK2}. In using our proof technique to find an immersion of $K_4$ in $G\times K_2$ we would choose $(v_1, 1), (v_2, 2), (v_3, 2), (v_4, 2)$ as our terminals. Each terminal has degree 3 and therefore every edge incident to a terminal must be used on a path to connect to the other terminals. Every vertex in $G\times K_2$ has degree $2$ or $3$, this means every non-terminal vertex can be used as a peg at most once. Since $(v_3, 2)$ and $(v_4, 2)$ have two neighbors in common, namely $(v_2, 1)$ and $(p_1, 1)$,  at most one can be used on the path from $(v_3, 2)$ to $(v_4, 2)$, meaning the other neighbor must be used as a peg twice, once for  $(v_3, 2)$ and once for $(v_4, 2)$ contradicting it can only be used as a peg one time. Thus we are unable to complete the immersion. However, in Figure~\ref{GcrossK2}, we identify an immersion of $K_4$ in $G\times K_2$ using different terminals.
\begin{figure}
\centering
\includegraphics[width=.78\textwidth]{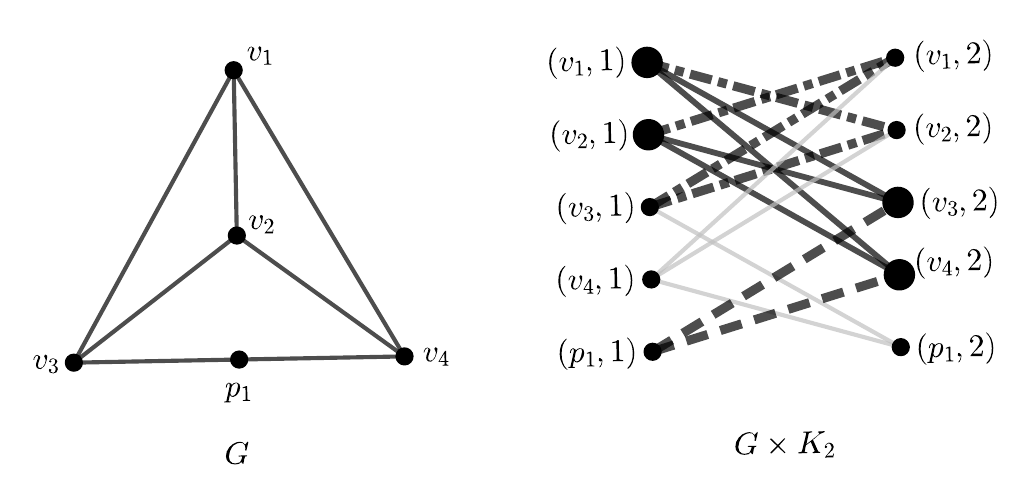}
\caption{An example of $G\times K_2$. $G$ has a $K_4$-immersion with terminals $v_1, v_2, v_3,$ and $v_4$, and we have indicated a $K_4$-immersion in $G \times K_2$. The terminals are the larger vertices and the gray lines are the edges unused by our $K_4$-immersion. The vertices $(v_3, 2)$ and $(v_4, 2)$ are joined by the path of length two through $(p_1, 1)$, and there is a copy of $K_{2,2}$ between $(v_1, 1), (v_2, 1)$ and $(v_3, 2), (v_4, 2)$. The vertices $(v_1, 1)$ and $(v_2, 1)$ are joined by the path of length four indicated by the dashed lines.
}
\label{GcrossK2}
\end{figure}

Another issue with the example provided in Figure 5 is that the immersion of $K_4$ in $G$ contains paths of different parity. We discuss this in the next section.

\subsection{Path parity}\label{PegParity}
In the proof of Theorem~\ref{KtTimesKr}, we found paths between two different kinds of pairs of vertices in $G\times H$: the first type is $(a,b)$ and $(c,d)$ where $a\neq c$ and $b\neq d$, and the second type is $(a,b)$ and $(a,d)$ or $(a,b)$ and $(c,b)$. In the first case, there were edges between the first coordinates in $G$, and between the second coordinates in $H$. In the second case, there were edges between one set of coordinates, and equality in the other coordinate. Any path between $(a,b)$ and $(a,d)$ requires a closed walk between the first coordinates and an open (i.e. not closed) walk in the second, and these walks must contain the same number of edges. If the open walk has an odd number of pegs, then the closed walk can alternate between $a$ and any neighbor of $a$, but if the open walk has an even number of pegs, then the closed walk must contain an odd number of edges, and hence contain an odd cycle. Any generalization of the theorem must therefore take into account the parity of the number of pegs between terminal vertices in the factors of a direct product.

In the next theorem we prove that if the parity of all paths in both immersions is the same, then our previous bound holds.

\begin{theorem}\label{DirectParity} Let $G$ and $H$ be graphs such that $\im(G)=t$ and $\im(H)=r$. If there is an immersion $I_1$ of $K_t$ in $G$ and an immersion $I_2$ of $K_r$ in $H$ such that every path between terminals in both of these immersions has the same parity, then $\im(G\times H) \geq (t-1)(r-1)+1$.
\end{theorem}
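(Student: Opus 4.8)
The plan is to start from a $K_{(t-1)(r-1)+1}$-immersion of $K_t\times K_r$, which exists by Theorem~\ref{KtTimesKr}, and to \emph{lift} it to such an immersion of $G\times H$, using $I_1$ and $I_2$ to expand the edges of $K_t\times K_r$ into walks. Write $v_1,\dots,v_t$ for the terminals of $I_1$ and $P_{a,c}$ for the path of $I_1$ joining $v_a$ to $v_c$ (for $a\ne c$), and likewise $u_1,\dots,u_r$, $Q_{b,d}$ for $I_2$. Label the vertices of $K_t$ by $1,\dots,t$ and those of $K_r$ by $1,\dots,r$, so every vertex of $K_t\times K_r$ is a pair $(a,b)$; send it to the vertex $(v_a,u_b)$ of $G\times H$. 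This map is injective since the $v_a$ are distinct and the $u_b$ are distinct, so it carries the terminals of the fixed immersion to $(t-1)(r-1)+1$ distinct vertices of $G\times H$. First I would replace each edge of (the image of) the immersion by a walk in $G\times H$ joining the same two vertices, then concatenate along the paths of the immersion to get a walk between each pair of terminals; since edge-disjoint walks contain edge-disjoint paths (delete closed subwalks), if these walks are pairwise edge-disjoint we obtain a $K_{(t-1)(r-1)+1}$-immersion of $G\times H$ and the theorem follows.

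Consider an edge $(v_a,u_b)(v_c,u_d)$ of the image; being an edge of $K_t\times K_r$ we have $a\ne c$ and $b\ne d$. I would expand it to a walk from $(v_a,u_b)$ to $(v_c,u_d)$ by ``zipping'' a walk $\alpha$ in $G$ from $v_a$ to $v_c$ with a walk $\beta$ in $H$ from $u_b$ to $u_d$: if $\alpha=(x_0,\dots,x_\ell)$ and $\beta=(y_0,\dots,y_\ell)$ have the \emph{same} length $\ell$, then $((x_0,y_0),\dots,(x_\ell,y_\ell))$ is a walk in $G\times H$. I take $\alpha$ to be $P_{a,c}$ and $\beta$ to be $Q_{b,d}$, each possibly lengthened. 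This is where the hypothesis enters: it guarantees that $\ell(P_{a,c})$ and $\ell(Q_{b,d})$ differ by an even number for all relevant indices, so the shorter of the two can be lengthened to match the longer by repeatedly inserting a back-and-forth move along one of its end edges. Crucially, after this padding the $G$-walk $\alpha$ still uses only edges of $P_{a,c}$ and the $H$-walk $\beta$ only edges of $Q_{b,d}$.

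The crux is edge-disjointness. Suppose the expansions of two distinct immersion edges $e=(v_a,u_b)(v_c,u_d)$ and $e'=(v_{a'},u_{b'})(v_{c'},u_{d'})$ share an edge of $G\times H$. Projecting to $G$ and to $H$, and using that $\alpha$ (resp. $\alpha'$) uses only edges of $P_{a,c}$ (resp. $P_{a',c'}$) and $\beta$ (resp. $\beta'$) only edges of $Q_{b,d}$ (resp. $Q_{b',d'}$), we would get a common edge of $P_{a,c}$ and $P_{a',c'}$ in $G$ and a common edge of $Q_{b,d}$ and $Q_{b',d'}$ in $H$; since $I_1$ and $I_2$ are immersions this forces $\{a,c\}=\{a',c'\}$ and $\{b,d\}=\{b',d'\}$. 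But for a fixed pair of first coordinates and a fixed pair of second coordinates there are exactly two edges of $K_t\times K_r$, namely $(v_a,u_b)(v_c,u_d)$ and its \emph{crossed mate} $(v_a,u_d)(v_c,u_b)$ (these are distinct because $b\ne d$, and they are never adjacent, so adjacent expansions are automatically edge-disjoint). Hence $e'$ is the crossed mate of $e$, and it remains to show that the expansions of $e$ and of its crossed mate can be chosen to be edge-disjoint. I would give them the same $G$-walk $\alpha$ (a fixed walk on $P_{a,c}$) but the oppositely oriented $H$-walks (the walk $u_b\to u_d$ along $Q_{b,d}$ for $e$, and the walk $u_d\to u_b$ along $Q_{b,d}$ for its mate), and — if padding is needed — insert the back-and-forth moves at opposite ends of $\alpha$ in the two expansions. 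Both walks then lie inside the copy of the direct product of the two paths $P_{a,c}$ and $Q_{b,d}$ sitting inside $G\times H$, and the claim becomes a finite combinatorial check there.

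The main obstacle is exactly this last check. When $\ell(P_{a,c})$ and $\ell(Q_{b,d})$ agree it is immediate, and when the common parity $\pi$ makes these lengths odd, the two endpoints $(v_a,u_b),(v_c,u_d)$ and $(v_a,u_d),(v_c,u_b)$ fall into different connected components of the direct product of the two paths, so the two expansions are trivially edge-disjoint. The work is concentrated in the remaining case, where both path-lengths are even and differ substantially: here one must choose precisely where the back-and-forth padding is placed so that the two ``squished'' walks through the $P_{a,c}\times Q_{b,d}$ grid never traverse the same grid edge. Splitting the proof into the subcases $\pi=0$ and $\pi=1$ is the natural way to organize this; the rest of the argument — the injectivity of the vertex map, the zipping construction, and the passage from walks to paths — is routine.
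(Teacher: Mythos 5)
Your overall strategy coincides with the paper's: fix the $K_{(t-1)(r-1)+1}$-immersion of Theorem~\ref{KtTimesKr} as a template, replace each of its edges by a ``zipped'' walk obtained by pairing the immersion path $P_{a,c}$ in $G$ with the immersion path $Q_{b,d}$ in $H$ (padding the shorter one by oscillating on an end edge, which the common-parity hypothesis makes possible), pass from walks to paths, and reduce edge-disjointness to the single problematic pair consisting of a template edge and its crossed mate, using the edge-disjointness of the paths within $I_1$ and within $I_2$. Your connected-component (position-parity) argument correctly disposes of the case where all paths have an even number of pegs, i.e.\ odd length; this is exactly the paper's ``sum of the subscripts'' argument in its Case~1. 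Lifting the template uniformly in both parity cases is a mild simplification over the paper, which in its odd-peg case connects terminals directly and therefore must separately construct walks between terminals sharing a coordinate.

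The gap is that you stop precisely where the content of the theorem lies: when all paths have an odd number of pegs (even length) and $\ell(P_{a,c})<\ell(Q_{b,d})$, you assert that the expansions of $(v_a,u_b)(v_c,u_d)$ and of its crossed mate ``can be chosen'' edge-disjoint and defer to ``a finite combinatorial check.'' This is not a finite check (it is a claim for every pair of even lengths), and it genuinely fails if the padding is misplaced. Concretely, take $P_{a,c}=v_a\,p_1\,v_c$ and $Q_{b,d}=u_b\,q_1\,q_2\,q_3\,u_d$ and pad both expansions by oscillating on the end edge $v_ap_1$: the first walk is $(v_a,u_b)-(p_1,q_1)-(v_a,q_2)-(p_1,q_3)-(v_c,u_d)$ and the second is $(v_a,u_d)-(p_1,q_3)-(v_a,q_2)-(p_1,q_1)-(v_c,u_b)$, and these share the edge joining $(p_1,q_1)$ to $(v_a,q_2)$. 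So your parenthetical ``insert the back-and-forth moves at opposite ends of $\alpha$'' is not a cosmetic choice but the essential step, and it must be written out and verified for arbitrary lengths $k\le l$ of the same parity --- including checking the padded segment of each walk against the unpadded segment of its mate, not merely padding against padding. Until that verification is supplied (the paper's Case~2 writes explicit coordinates for both walks and argues from the reversed traversal of $Q_{b,d}$), the odd-peg half of your proof is incomplete.
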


\begin{proof} Let $G$ and $H$ be graphs such that $\im(G)=t$ and $\im(H)=r$. Let immersion $I_1$ of $K_t$ in $G$ and  immersion $I_2$ of $K_r$ in $H$ be immersions such that the parity of every path in $I_1$ and $I_2$ between terminals is the same. Let the terminals of $I_1$ be labeled $1, 2, \ldots, t$ and the terminals of $I_2$ be labeled $1, 2, \ldots, r$. The $(t-1)(r-1)+1$ terminals of our clique immersion will be $(1, 1)$ and $(j, k)$ for each $j \in \{ 2, \ldots, t\}$ and each $k \in \{2, \ldots, r\}$.

\underline{Case 1:} Suppose every path in $I_1$ and $I_2$ has an even number of pegs. We will use the proof method of Theorem~\ref{KtTimesKr}. In Theorem~\ref{KtTimesKr} we constructed a $K_{(t-1)(r-1)+1}$-immersion, $I$, in $K_t\times K_r$ with terminals $(1,1)$ and all of its neighbors. We are using the same terminals now in $G\times H$, but will replace each edge of $I$ by a path in $G\times H$.

Let $(a, b)-(c, d)$ be an edge in $I$, thus $a \neq c$ and $b \neq d$. Let the path from $a$ to $c$ in $I_1$ be
$$P_{a, c} =  a - p_1 - p_2 - \ldots - p_k  - c$$
and let the path from $b$ to $d$ in $I_2$ be
$$P_{b, d} =  b - q_1 - q_2 - \ldots - q_l - d,$$
 where $k$ and $l$ are even and, without loss of generality, $k\leq l$. We choose the path from $(a, b)$ to $(c, d)$ in $G\times H$ to be
$$(a, b) - (p_1, q_1) - (p_2, q_2) - \ldots - (p_k, q_k) - (p_{k-1}, q_{k+1}) - (p_k, q_{k+2}) - \ldots - (p_k, q_l) - (c, d).$$

We must now confirm that these newly defined paths in $G\times H$ are edge-disjoint. Note that the only times we will use $P_{a,c}$ and $P_{b,d}$ together is when connecting $(a, b)$ to $(c, d)$ or when connecting $(a, d)$ to $(c, b)$. Using the above, our path from $(a, d)$ to $(c, b)$ will be
$(a, d) - (p_1, q_l) - (p_2, q_{l-1}) \ldots - (p_k, q_{l-k+1}) - (p_{k-1}, q_{l-k}) - (p_{k-1}, q_{l-k-1}) - \ldots - (p_k, q_1) - (c, b).$
This path is edge-disjoint from the path from $(a, b)$ to $(c, d)$ because all of the vertices are different (the sum of the subscripts of each vertex on the path from $(a, b)$ to $(c, d)$ is even, while the sum of the subscripts of each vertex on the path from $(a, d)$ to $(c, b)$ is odd). In the case where one of the paths, $P_{ac}$ or $P_{bd}$,  is an edge we give the first vertex a subscript of $0$ and the second vertex a subscript of $1$ and the above parity argument applies. Therefore we have defined a $K_{(t-1)(r-1)+1}$-immersion in $G\times H$.

\underline{Case 2:} Suppose every path in $I_1$ and $I_2$ has an odd number of pegs. We must define paths $(a, b) - (c, d)$ and $(a, b) - (1,1)$, where $a$ and $c$ are terminals in $I_1$ and $b$ and $d$ are terminals in $I_2$. Let the path from $a$ to $c$ in $I_1$ be
$$P_{a,c} =  a - p_1 - p_2 - \ldots - p_k  - c,$$
the path from $b$ to $d$ in $I_2$ be
    $$P_{b,d} =  b - q_1 - q_2 - \ldots - q_l - d,$$
the path from $1$ to $a$ in $I_1$ be
    $$P_{1,a} =  1 - w_1 - w_2 -\ldots - w_m - a,$$
and the path from $1$ to $b$ in $I_2$ be
 $$P_{1,b} =  1 - z_1 - z_2 - \ldots - z_n - b$$
where $k, l, m,$ and $n$ are odd and, without loss of generality, $k + 2j = l$ and $m + 2h = n$ for some whole numbers $j$ and $h$.
If $a \neq c$, $b\neq d$, then for our path from $(a,b)$ to $(c,d)$ we use the path

$(a, b) - (p_1, q_1) - (p_2, q_2) - \ldots - (p_k, q_k) - (p_{k-1}, q_{k+1}) - (p_k, q_{k+2}) - \ldots - (p_k, q_l) - (c, d).$

\noindent For our path from $(1, 1)$ to $(a, b)$ we use

$(1, 1) - (w_1, z_1) - (w_2, z_2) - \ldots - (w_m, z_m) - (w_{m-1}, z_{m+1}) - (w_m, z_{m+2}) - \ldots - (w_m, z_n) - (a, b).$

If $a = c$ then for our path from $(a, b)$ to $(a, d)$ we use
$$(a, b) - (w_m, q_1) - (a, q_2) - (w_m, q_3) - \ldots - (w_m, q_l) - (a, d).$$
Similarly, if $b = d$ then for our path from $(a,b)$ to $(c, d)$ we use
$$(a, b) - (p_1, z_n) - (p_2, b) - (p_3, z_n) - \ldots - (p_k, z_n) - (c, b).$$

We must confirm that none of the defined paths share any edges. Suppose for a contradiction that the edge $(x_1, x_2) - (y_1, y_2)$ is used on two different paths in our immersion where $x_1y_1$ is on $P_{e,f}$ in $I_1$ and $x_2y_2$ is on $P_{g,h}$ in $I_2$, where $e$ and $f$ are terminals in $I_1$ and $g$ and $h$ are terminals in $I_2$. This means that we used the paths $P_{e,f}$ and $P_{g,h}$ in two instances, i.e., when making the path from $(e, g)$ to $(f, h)$ and when making the path from $(e, h)$ to $(f, g)$. For the purpose of our argument, we label the paths $P_{e,f}$ and $P_{g,h}$ as follows
$$P_{e,f} = e - u_1 - u_2 - \ldots - x_1 - y_1 - \ldots - u_m - f$$ and
$$P_{g,h} = g - v_1 - v_2 - \ldots - x_2 - y_2 - \ldots - v_n - h$$
where $m$ and $n$ are odd and $m \leq n$.

If $e, f, g, h \neq 1$, then $e \neq f$ and $g \neq h$.
Then the path from $(e, g)$ to $(f, h)$ will be

$(e, g) - (u_1, v_1) - (u_2, v_2) - \ldots - (x_1, x_2) - (y_1, y_2) - \ldots - (u_m, v_m) - (u_{m-1}, v_{m+1}) - (u_m, v_{m+2}) - \ldots - (u_m, v_n) - (f, h)$

\noindent and the path from $(e, h)$ to $(f, g)$ will be

$(e, h) - (u_1, v_n) - (u_2, v_{n-1}) - \ldots - (x_1, y_2) - (y_1, x_2) - \ldots - (u_m, v_{n-m+1}) - (u_{m-1}, v_{n-m}) - \ldots (u_m, v_1) - (f, g)$.

\noindent As we can see, since the direction in which we traverse the path $P_{g,h}$ is different in each case, the edge $(x_1, x_2) - (y_1, y_2)$ is not actually repeated.

If $e=1$ then the paths we are considering are $(1, 1)$ to either $(f, g)$ or $(f, h)$ and $(f, g)$ to $(f, h)$. In each case the path $P_{e,f}$ is not used, so the edge $(x_1, x_2) - (y_1, y_2)$ will not be used.

We have shown that the paths we defined are edge disjoint and thus have defined an immersion of $K_{(t-1)(r-1)+1}$ in $G \times H$.
 \end{proof}

If we know that one of the factors of $G\times H$ has an immersion in which every path has an even number of pegs, we can generalize the result of Theorem~\ref{DirectParity} as follows. To do this we again use the blueprint provided by the proof of Theorem~\ref{GtimesKr}.

\begin{theorem}\label{GTimesParity} Let $G$ and $H$ be graphs such that $\im(G)=t$ and $\im(H)=r$ where $r\geq 3$. If there is an immersion $I_2$ of $K_r$ in $H$ such that every path in the immersion has an even number of pegs, then $\im(G\times H) \geq (t-1)(r-1)+1$
\end{theorem}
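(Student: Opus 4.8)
The plan is to run the construction of Theorem~\ref{GtimesKr} essentially verbatim, but to replace the edges of $H$ that were ``free'' there (because $H=K_r$) by the paths supplied by the all-even immersion $I_2$, reconciling lengths by the threading trick of Theorem~\ref{DirectParity}. Fix the immersion $I_1$ of $K_t$ in $G$ with terminals $v_1,\dots,v_t$ and the immersion $I_2$ of $K_r$ in $H$ with terminals $u_1,\dots,u_r$, every path of which has an even number of pegs. As in Theorem~\ref{GtimesKr}, take as the terminals of the $K_{(t-1)(r-1)+1}$-immersion in $G\times H$ the vertex $(v_1,u_1)$ together with $(v_i,u_k)$ for $i\in\{2,\dots,t\}$, $k\in\{2,\dots,r\}$, and read off from the proof of Theorem~\ref{KtTimesKr} (via $i\mapsto v_i$, $j\mapsto u_j$) a route between each pair of these terminals. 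Each such route is a concatenation of \emph{diagonal moves} $(v_i,u_m)\to(v_j,u_n)$ with $i\neq j$ and $m\neq n$, whose endpoints lie among the $(v_i,u_j)$ and whose only pivot vertices have the form $(v_1,u_1)$, $(v_a,u_1)$, or $(v_1,u_b)$; so it suffices to realize each diagonal move by a path in $G\times H$ and to check that all these paths are pairwise edge-disjoint.

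To realize a diagonal move $(v_i,u_m)\to(v_j,u_n)$, let $P^G_{i,j}$ be the corresponding $I_1$-path, with $\alpha$ pegs, and recall that the $I_2$-path $P^H_{m,n}$ has an even number of pegs. If $\alpha$ is even, then $P^G_{i,j}$ and $P^H_{m,n}$ both have odd length, and I thread them exactly as in Case~1 of Theorem~\ref{DirectParity}: walk along both simultaneously, and once the shorter one is exhausted pad it by bouncing between its last two vertices while the longer one marches on (the length difference is even, so this terminates correctly and gives a path, the case $\alpha=0$ being the bounce between $v_i$ and $v_j$ themselves). If $\alpha$ is odd, then $P^G_{i,j}$ has even length, so what is needed is an \emph{even}-length walk in $H$ from $u_m$ to $u_n$; since every $I_2$-path has odd length, I instead use $P^H_{m,\ell}$ followed by $P^H_{\ell,n}$ for an intermediate terminal $u_\ell$ with $\ell\neq m,n$ (this is where $r\geq 3$ enters), whose concatenation has even length, and thread and pad it against $P^G_{i,j}$ as before, arranging the padding so that the junction at $u_\ell$ occurs over a peg of $P^G_{i,j}$. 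To pin down $\ell$ I use an idempotent Latin square $A$ of order $r$ (which exists since $r\geq 3$), exactly as in Case~3 of Theorem~\ref{GtimesKr}: reading the move in its route-direction $m\to n$, set $\ell=a_{mn}$, so that $\ell\neq m,n$ and, for each fixed value of $\ell$, the ordered pairs routed through $u_\ell$ form a partial permutation of the terminal set of $I_2$.

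The main obstacle is the edge-disjointness of the resulting collection of paths. An edge of $G\times H$ that gets used projects to an edge of some $P^G_{i,j}$ and an edge of some $I_2$-path; since the $P^G$'s are pairwise edge-disjoint and the $I_2$-paths are pairwise edge-disjoint, a clash can arise only between two moves that reuse the same $P^G_{i,j}$ \emph{and} the same $I_2$-path segment. As in Theorem~\ref{DirectParity}, such a pair of moves is forced to traverse the shared segments in opposite directions, and the parity-of-subscript-sum bookkeeping then shows that no edge is repeated; the Latin-square choice of $\ell$ is what rules out, in the odd-$\alpha$ case, two moves sharing an initial segment $P^H_{m,\ell}$ while also sharing a $P^G_{i,j}$ traversed the same way. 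I expect this odd-$\alpha$ bookkeeping to be the delicate part — in particular tracking how the moves through the pivot vertices $(v_a,u_1)$ and $(v_1,u_b)$ overlap on the $I_2$-paths ending at $u_1$ — whereas the even-$\alpha$ case should go through as Theorem~\ref{DirectParity} applied move by move.
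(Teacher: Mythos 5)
Your even-$\alpha$ case is sound (it is Theorem~\ref{DirectParity}, Case~1, applied move by move, and the subscript-sum parity argument transfers because both $P^G_{i,j}$ and $P^H_{m,n}$ have odd length). The genuine gap is the odd-$\alpha$ case, which you cannot avoid since nothing is assumed about the parity of $I_1$, and which you explicitly defer (``I expect this odd-$\alpha$ bookkeeping to be the delicate part''). Concretely: there, $P^G_{i,j}$ has even length and the detour walk $W=P^H_{m,\ell}\cdot P^H_{\ell,n}$ has even length, so the two ``conjugate'' threadings of the same $(P^G_{i,j},W)$ pair both have \emph{even} index-sums ($2s$ versus $|W|$ or $\alpha+1$), and the parity bookkeeping you invoke no longer separates them. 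Worse, because each $I_2$-path $P^H_{m,\ell}$ can appear as a first segment (rows of $A$) or a second segment (columns of $A$) of up to four distinct moves through the same $P^G_{i,j}$, and because the padding desynchronizes the indices, you would need a genuinely new disjointness argument here; the Latin square alone does not rule out two such moves loading the same edge of $G\times H$. As written, the proposal does not establish the theorem.

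You are also making the problem harder than it needs to be. The paper's proof expands in two stages: first it takes the immersion $I$ in $G\times K_r$ already supplied by Theorem~\ref{GtimesKr} (which is where the odd-$\alpha$ difficulty and the Latin square are dealt with, at the level of single $K_r$-edges), and only then replaces each single edge $(a,b)-(c,d)$ of $I$ --- whose first coordinates form an actual edge $ac$ of $G$ --- by the bouncing path $(a,b)-(c,q_1)-(a,q_2)-\cdots-(a,q_l)-(c,d)$ along $P_{b,d}$ in $I_2$. The hypothesis that $l$ is even is exactly what makes this bounce start at $a$ and end at $c$, and edge-disjointness becomes local: since $I$ is an immersion, the only two edges of $I$ projecting to the pair (edge $ac$, path $P_{b,d}$) are $(a,b)-(c,d)$ and $(a,d)-(c,b)$, and their bounces pair $c$ with $q_i$ for $i$ odd and $i$ even respectively. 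If you reorganize your argument this way --- expand the $G$-coordinate first via Theorem~\ref{GtimesKr}, then bounce along the $I_2$-paths --- the global bookkeeping you were dreading disappears.
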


\begin{proof} Let $G$ and $H$ be graphs such that $\im(G)=t$ and $\im(H)=r$ and let $I_2$ be an immersion  of $K_r$ in $H$ such that every path in the immersion has an even number of pegs. We will use the proof method of Theorem~\ref{GtimesKr}. In Theorem~\ref{GtimesKr} we constructed a $K_{(t-1)(r-1)+1}$-immersion, $I$ in $G\times K_r$ with terminals $(v_1, 1)$ and $(v_2, k), (v_3, k), \ldots, (v_t, k)$ for each $k\in \{2, 3, \ldots, r\}$, where $v_1, v_2, \ldots, v_t$ are terminals in a $K_t$-immersion in $G$. We use the same terminals now in $G\times H$, but will replace each edge of $I$ by a path in $G\times H$.

Let $(a, b)-(c, d)$ be an edge in $I$, thus $a \neq c$ and $b \neq d$ and $ac$ is an edge in $G$.
Let the path from $b$ to $d$ in $I_2$ be
$$P_{b,d} =  b - q_1 - q_2 - \ldots - q_l - d,$$
 where $l$ is even  We choose the path from $(a, b)$ to $(c, d)$ in $G\times H$ to be
$$(a, b) - (c, q_1) - (a, q_2) - \ldots  - (a, q_l) - (c, d).$$

We must now confirm that these newly defined paths in $G\times H$ are edge-disjoint. Note that, the only times we will alternate $a$ and $c$ in the first coordinate and $P_{b,d}$ together is when connecting $(a, b)$ to $(c, d)$ or when connecting $(a, d)$ to $(c, b)$. Using the above, our path from $(a, d)$ to $(c, b)$ will be
$(a, d) - (c, q_l) - (a, q_{l-1}) - \ldots - (a, q_1) - (c, b)$. This path is edge-disjoint from the path from $(a, b)$ to $(c, d)$ because in this path $c$ is paired with $q_i$ where $i$ is even, while in the path from $(a, b)$ to $(c, d)$, $c$ is paired with $q_j$ where $j$ is odd.
\end{proof}

\subsection{Examples}\label{DirectEx} As a consequences to our theorems we now give the immersion numbers for the direct products of several specific families.

\begin{theorem} $\im(C_m\times K_r) = 2r-1$\end{theorem}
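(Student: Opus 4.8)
The plan is to prove $\im(C_m \times K_r) = 2r-1$ by establishing matching upper and lower bounds. For the upper bound, I would note that $\Delta(C_m) = 2$ and $\Delta(K_r) = r-1$, so by Proposition~\ref{MaxProducts}(3), $\im(C_m \times K_r) \leq \Delta(C_m)\Delta(K_r) + 1 = 2(r-1)+1 = 2r-1$. This is immediate and requires no work.

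For the lower bound, the key observation is that $\im(C_m) = 3$ for every $m \geq 3$: a cycle has maximum degree $2$, so $\im(C_m) \leq 3$ by Remark~\ref{Delta}, and one checks that any three vertices of $C_m$ serve as terminals of a $K_3$-immersion (the three connecting paths are the three arcs of the cycle, which are pairwise edge-disjoint). Thus $C_m$ has a $K_3$-immersion, and since $K_r$ is (trivially) a subgraph of $H = K_r$, I would like to apply Theorem~\ref{GtimesKr} with $G = C_m$, $t = 3$, to conclude $\im(C_m \times K_r) \geq (t-1)(r-1)+1 = 2(r-1)+1 = 2r-1$, at least when $r \geq 3$. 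Combined with the upper bound this gives equality. The case $r = 2$ would need separate attention (or the statement may implicitly assume $r \geq 3$, matching the hypothesis of Theorem~\ref{GtimesKr}); for $r=2$, $C_m \times K_2$ is either a single cycle $C_{2m}$ or two disjoint copies of $C_m$ depending on the parity of $m$, and in either case the immersion number is $3 = 2r-1$, so the formula still holds and can be checked directly.

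Alternatively — and perhaps more cleanly, avoiding any reliance on a subgraph hypothesis — I would use Theorem~\ref{GTimesParity}: take $H = K_r$ with $r \geq 3$, and exhibit a $K_r$-immersion of $K_r$ in itself in which every path has an even number of pegs (namely zero pegs, since every pair of terminals is joined by an edge). Then Theorem~\ref{GTimesParity} applied with $G = C_m$, $t = \im(C_m) = 3$, yields $\im(C_m \times K_r) \geq (3-1)(r-1)+1 = 2r-1$ directly. Either route works; I would present whichever is shortest given what has already been invoked in the surrounding examples.

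The main (and essentially only) obstacle is the bookkeeping around small cases and the hypothesis $r \geq 3$. The substantive content has already been done in Theorem~\ref{GtimesKr} / Theorem~\ref{GTimesParity}, so this proof is a short corollary-style argument: identify $\im(C_m) = 3$, invoke the appropriate prior theorem with $G = C_m$, and match against the Proposition~\ref{MaxProducts} upper bound. I would write it in two or three sentences.
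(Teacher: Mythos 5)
Your proposal is correct and follows essentially the same route as the paper: upper bound from Proposition~\ref{MaxProducts}, lower bound for $r\geq 3$ from Theorem~\ref{GtimesKr} applied with $G=C_m$ and $t=\im(C_m)=3$, and a direct check of the $r=2$ case via the two possible forms of $C_m\times K_2$. The alternative via Theorem~\ref{GTimesParity} (using the zero-peg immersion of $K_r$ in itself) is a valid minor variation but not needed.
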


\begin{proof} When $r=2$ and $m$ is even, $C_m\times K_r$ is two disjoint copies of $C_m$ and thus $\im(C_m\times K_2) = 3$. When $r=2$ and $m$ is odd, $C_m\times K_r = C_{2m}$ and thus $\im(C_m\times K_2) = 3$. When $r\geq 3$, Theorem~\ref{GtimesKr} implies $\im(C_m\times K_r) \geq 2(r-1)+1=2r -1$ and Proposition~\ref{MaxProducts} implies $\im(C_m\times K_r) \leq 2(r-1)+1=2r -1$. Therefore $\im(C_m\times K_r) = 2r-1$.\end{proof}

\begin{theorem}\label{CmTimesCn} $\im(C_m\times C_n) = 5$.
\end{theorem}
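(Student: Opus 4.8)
The plan is to obtain the upper and lower bounds separately. For the upper bound, note that $\Delta(C_m \times C_n) = \Delta(C_m)\Delta(C_n) = 2\cdot 2 = 4$, so Remark~\ref{Delta} gives $\im(C_m\times C_n)\leq 5$ immediately. For the lower bound we want $\im(C_m\times C_n)\geq 5 = (3-1)(3-1)+1$, matching the form of Conjecture~\ref{direct} with $t=r=3$, since $\im(C_m)=\im(C_n)=3$ for all $m,n\geq 3$. The natural approach is to try to invoke Theorem~\ref{DirectParity}: if we can choose a $K_3$-immersion in $C_m$ and a $K_3$-immersion in $C_n$ in which \emph{every} path between the three terminals has pegs of the same parity, we are done. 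A $K_3$-immersion in $C_m$ uses all of $C_m$: pick three terminals on the cycle, and the three arcs between consecutive terminals are the three edge-disjoint paths. The number of pegs on these arcs are $a-1, b-1, c-1$ where $a+b+c = m$ and $a,b,c\geq 1$ are the arc lengths; their parities are not freely choosable, and in fact one checks that when $m$ is even we cannot make all three of $a-1,b-1,c-1$ even (that would force $m$ odd) nor all three odd (that forces $m\equiv 1 \pmod 2$ as well, since the sum of three odd numbers is odd). So Theorem~\ref{DirectParity} applies directly only for suitable congruence classes of $m$ and $n$, and the main work is the remaining cases.

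The key steps, in order, are: (1) dispose of the upper bound via Remark~\ref{Delta}. (2) Handle the case where both $m$ and $n$ admit a same-parity $K_3$-immersion (e.g. when each of $m,n$ is odd, take arcs of lengths summing to an odd number with all arc-lengths odd, giving all even peg-counts, so $I_1,I_2$ have all-even peg parity and Theorem~\ref{DirectParity} Case~1 applies; other residues giving all-odd peg counts feed Case~2). (3) For the leftover cases — at least one of $m,n$ not cooperating, say $m$ even — construct the $K_5$-immersion in $C_m\times C_n$ by hand. Here I would use the global/product structure: $C_m\times C_n$ can be viewed via $C_n$ copies of an edgeless $C_m$, with the $C_n$-edges joining consecutive $C_m$-fibers along the edges of $C_m$. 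Choose a vertex $x$ and its four neighbors as terminals (the standard strategy described in Section~\ref{prelims}), and build the six connecting paths explicitly, routing four of them ``locally'' around a single $4$-cycle neighborhood-structure and the remaining two ``globally'' around the long cycles, imitating the explicit path lists in the proofs of Proposition~\ref{BoxPath6} and Theorem~\ref{BoxPath}. Since $C_m$ with $m$ small ($m=3,4$) are special, it may be cleanest to first settle $C_3\times C_n$, $C_4\times C_n$ (and their symmetric partners) directly, and then reduce larger even $m$ to these by noting a suitable subgraph or by a direct routing that only uses a bounded-size portion of the cycles.

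The hard part will be step (3): producing edge-disjoint routings for all $\binom{5}{2}-\binom{4}{1}=6$ non-trivial terminal pairs simultaneously in the sparse graph $C_m\times C_n$, where every vertex has degree exactly $4$ and hence every terminal's incident edges are \emph{all} forced into use and every non-terminal can serve as a peg at most twice (indeed the obstruction illustrated in Figure~\ref{GcrossK2} is exactly of this flavor). I expect the cleanest route is to reduce to a finite check: show that if $\im(C_{m_0}\times C_{n_0})=5$ for the base cases $m_0,n_0\in\{3,4,5,6\}$ (verified by explicit immersions, analogous to Proposition~\ref{BoxPath6}), then for larger $m$ one can ``absorb'' the extra length of the cycle into one of the long global paths without disturbing the others, because that path already traverses an entire fiber-arc and can be rerouted through the added vertices. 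Making that absorption argument rigorous — checking the lengthened path stays edge-disjoint from the other five — is the crux, but it is a routine-if-tedious verification once the base-case immersions are written down, so I would present the base cases in full and the absorption step as a short lemma.
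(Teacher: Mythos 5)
Your upper bound is fine, and your overall plan (use the parity theorems where possible, construct the rest by hand) is the same as the paper's, but two genuine problems derail it. First, your parity bookkeeping is wrong. You claim that for even $m$ one can make neither all three peg-counts even nor all three odd. But all three peg-counts odd means all three arc lengths $a,b,c$ are even, whence $m=a+b+c$ is \emph{even} (equivalently, $(a-1)+(b-1)+(c-1)=m-3$ being odd forces $m$ even, not odd). So for every even $m\geq 6$ the terminals $1,3,5$ give arcs of lengths $2,2,m-4$ and hence a $K_3$-immersion with all-odd peg counts; Theorem~\ref{DirectParity} (Case 2) then settles all $m,n\geq 6$ both even, which is exactly what the paper does. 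Second, you overlook Theorem~\ref{GTimesParity}, which requires the all-even-pegs condition on only \emph{one} factor: taking terminals $1,2,3$ in $C_n$ for $n$ odd gives peg counts $0,0,n-3$, all even, so that theorem disposes of every case in which at least one of $m,n$ is odd, with no construction needed. With these two corrections the only leftover cases are $C_4\times C_4$ and $C_m\times C_4$ with $m\geq 6$ even.

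Because of the parity error, your step (3) is instead asked to cover every pair with $m$ even and $n$ arbitrary, and that step is only a sketch: the base-case immersions are not written down, and the ``absorption'' lemma --- rerouting a lengthened global path through the added cycle vertices while preserving edge-disjointness from the other five paths --- is precisely the part you identify as the crux and do not prove. In a $4$-regular graph every edge at a terminal is forced into the immersion and every non-terminal can serve as a peg at most twice, so edge-disjointness here is genuinely delicate (the discussion around Figure~\ref{GcrossK2} in the paper shows how such routings can fail); the paper avoids the issue by exhibiting the two remaining immersions explicitly. As written, the proposal does not constitute a proof of the lower bound.
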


\begin{proof}We have two cases: (1) $n$ is odd, and (2) $m$ and $n$ are even. Since $\Delta(C_m\times C_n) = 4$, Proposition~\ref{MaxProducts} implies $\im(C_m\times C_n) \leq 5$. Therefore in each case we need only show $\im(C_m \times C_n)\geq5$. For both cases we will label the vertices of $C_m$ with  $1, 2, \ldots, m$ and the vertices of $C_n$ with $1, 2, \ldots, n$ clockwise around the cycles.

\underline{Case 1:} Let $n$ be odd. By Thereom~\ref{GTimesParity}, if we can find an immersion $I_2$ of $K_3$ in $C_n$ in which the parity of all the paths are even, then $\im{(C_m \times C_n)} \geq 5$. We take vertices $1, 2,$ and $3$ as the terminals of our $K_3$-immersion in $C_n$. Since the cycle is odd, the number of pegs on each path in the immersion is even.

\underline{Case 2:} Let $m$ and $n$ be even. We divide this into three sub-cases, one where $m$ and $n$ are both greater than or equal to 6, one where $m=n=4$, and one where $m\geq 6$ and $n=4$.
\begin{enumerate}
  \item[(i)] Let $m$ and $n$ be greater than or equal to $6$. We can find immersions $I_1$ of $K_3$ in $C_m$ and an immersion $I_2$ of $K_3$ in $C_n$ in which the parity of all the paths is the same by taking vertices $1, 3,$ and $5$ as the terminals of our $K_3$-immersion in each cycle. Since the cycles are even, the number of pegs on each path in the immersion is odd. Therefore, by Theorem~\ref{DirectParity}, $\im{(C_m \times C_n)} \geq 5$.

  \begin{figure}
\centering
\includegraphics[width=\textwidth]{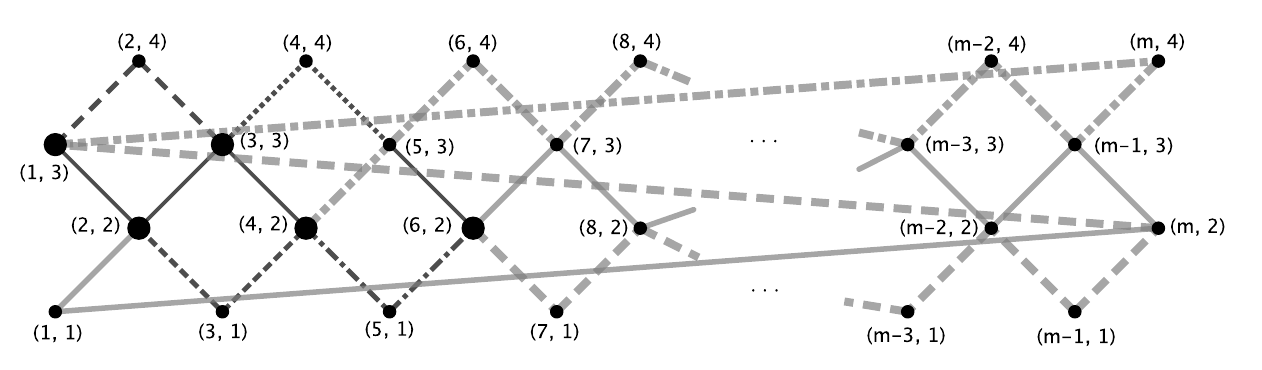}
\caption{A $K_5$-immersion in $C_{m}\times P_4$ for $m$ even and greater than or equal to $6$. Only one of the connected components of $C_{m}\times P_4$ is shown. Terminals are $(2,2), (4, 2), (6, 2), (1, 3),$ and $(3,3)$. Edges between terminals are shown as solid black lines. Paths are shown as various dashed lines, some gray and some black to indicate the different paths between terminals.
}
\label{C2jcrossP4}
\end{figure}

  \item[(ii)] Let $m = n = 4$. Since $C_4$ is bipartite, $C_4\times C_4$ has two isomorphic connected components. Vertices whose coordinates sum is even are in one component and those whose sum is odd are in another component. We will only describe the immersion for the even-sum component, where we use $(2, 2), (4, 2), (1, 3), (3, 3)$ and $(1, 1)$ as our terminals. We then use the following edges and paths to complete the immersion.

  $(2,2) - (3, 1) - (4, 2)$

  $(2,2) - (1,3)$

  $(2, 2) - (3,3)$

  $(2, 2) - (1, 1)$

  $(4,2) - (1, 3)$

  $(4,2) - (3, 3)$

  $(4,2) - (1, 1)$

  $(1, 3) - (2, 4) - (3,3)$

  $(1, 3) - (4, 4) - (3,1) - (2, 4) - (1,1)$

  $(3, 3) - (4, 4) - (1, 1)$

  \item[(iii)] Let $m\geq 6$ and $n=4$. Since $C_6$ and $C_4$ are bipartite, $C_6\times C_4$ has two isomorphic connected components. We will only describe the immersion for one component, where we use $(2, 2), (4, 2), (6, 2), (1, 3),$ and $(3, 3)$ as the terminals of our $K_5$-immersion. We then use the following edges and paths to complete the immersion.

  $(2,2) - (3, 1) - (4, 2)$

  $(2, 2) - (1, 1) - (m, 2) - (m-1, 3) - (m-2, 2) - (m-3, 3) - \ldots - (6, 2)$

  $(2,2) - (1,3)$

  $(2, 2) - (3,3)$

  $(4, 2) - (5, 1) - (6, 2)$

  $(4, 2) - (5, 3) - (6, 4) - (7, 3) - (8, 4) - \ldots (m-1, 3) - (m, 4) - (1, 3)$

  $(4,2) - (3, 3)$

  $(6, 2) - (5, 3) - (4, 4) - (3,3)$

  $(6,2) - (7,1) - (8, 2) - (9, 1) - \ldots - (m -1, 1) - (m, 2) - (1, 3)$

  $(1, 3) - (2, 4) - (3,3)$

  \noindent See Figure~\ref{C2jcrossP4} for an illustration of this immersion.
\end{enumerate}
\end{proof}

As an example where the bound of Conjecture~\ref{direct} is not tight we prove Theorem~\ref{CmTimesPath}.

\begin{theorem}\label{CmTimesPath} For $m\geq 5$, $\im(C_m\times P_4)=5$.\end{theorem}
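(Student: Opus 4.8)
The plan is to prove the upper and lower bounds separately. For the upper bound, $\Delta(C_m\times P_4)=\Delta(C_m)\Delta(P_4)=2\cdot 2=4$, so $\im(C_m\times P_4)\le 5$ by Remark~\ref{Delta} (this is also Proposition~\ref{MaxProducts}). For the lower bound I would exhibit a $K_5$-immersion, splitting into the cases $m$ even and $m$ odd; this split is natural because $C_m\times P_4$ is connected exactly when $m$ is odd (for $m$ even it has two components), and because the parity of $m$ governs which ways around the cycle a path may be routed.

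For $m$ even (hence $m\ge 6$), the work is essentially already done in the proof of Theorem~\ref{CmTimesCn}: in Case 2(iii) a $K_5$-immersion is built in $C_m\times C_4$ whose second-coordinate walks only ever use the edges $\{1,2\},\{2,3\},\{3,4\}$ of $C_4$, i.e.\ only edges of $P_4$. That immersion, with terminals $(2,2),(4,2),(6,2),(1,3),(3,3)$, therefore already lives inside $C_m\times P_4$, giving $\im(C_m\times P_4)\ge 5$.

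For $m$ odd I would take as terminals the five degree-$4$ vertices $(1,2),(3,2),(5,2),(2,3),(4,3)$, chosen so that the cycle vertices $6,7,\dots,m$ are free for routing. Exactly four of the $\binom{5}{2}$ pairs are adjacent in $C_m\times P_4$, namely $(1,2)(2,3),(3,2)(2,3),(3,2)(4,3),(5,2)(4,3)$, and for those we use the edge itself; the other six pairs must be joined by edge-disjoint paths. For $m=5$ one lists these six paths explicitly: two are length-$2$ paths through the ``spoke'' vertices $(2,1)$ and $(4,1)$ at coordinate $1$, and the other four wind through the coordinate-$4$ vertices and across the chord $\{5,1\}$ of $C_5$; a routine bookkeeping check confirms the resulting $30$ edges are all distinct, so this is a $K_5$-immersion in $C_5\times P_4$. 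For odd $m\ge 7$ the only obstruction is that the chord $\{5,1\}$ of $C_5$ no longer exists: each of the six ``lifts'' of that edge used in the $m=5$ routing is instead replaced by a path running monotonically from vertex $5$ to vertex $1$ through the free vertices $6,\dots,m$. Such a detour has length $m-4$, which is odd and hence has the same parity as the edge it replaces, so the accompanying $P_4$-walk in the second coordinate can still be made to end at the prescribed vertex; and the six detours live in a ``corridor'' isomorphic to $P_{m-3}\times P_4$, which carries exactly the $6(m-4)$ edges that the six detours need.

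I expect the odd case to be the main obstacle, and within it the crux is choosing the six winding detours through the corridor $P_{m-3}\times P_4$ to be simultaneously edge-disjoint with the prescribed endpoints in the copies of $P_4$ over vertices $1$ and $5$ --- equivalently, partitioning the edges of $P_{m-3}\times P_4$ into six monotone paths with those endpoints. I would settle this by exhibiting one explicit ``Latin-square-style'' rule dictating which of the two lifts of each $P_4$-edge each detour takes over each cycle edge; once that pattern is fixed, edge-disjointness from the three unchanged paths of the $m=5$ routing is automatic since those paths never enter the corridor.
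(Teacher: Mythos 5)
Your upper bound and your even case are exactly the paper's: $\Delta(C_m\times P_4)=4$ gives $\im\le 5$, and for even $m$ the paper likewise just observes that the $K_5$-immersion built in Case 2(iii) of Theorem~\ref{CmTimesCn} never uses the edge $\{4,1\}$ of $C_4$ and so already sits inside $C_m\times P_4$. Your odd case, however, is a genuinely different construction from the paper's. The paper picks the terminals $(1,2),(1,3),(2,2),(2,3),(3,2)$ and writes down, for every odd $m$ at once, ten explicit edges/paths that wind around the cycle; there is no base case and no extension step. You instead take the terminals $(1,2),(3,2),(5,2),(2,3),(4,3)$, solve $m=5$ by hand, and then grow the cycle by replacing each used lift of the chord $\{5,1\}$ with a monotone detour through the corridor over $6,\dots,m$. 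Your structural bookkeeping is all correct: those five vertices do have degree $4$ with exactly the four adjacencies you list, the $m=5$ routing can indeed be made to use all $30$ edges and all six lifts of the chord, and the corridor $P_{m-3}\times P_4$ has exactly the $6(m-4)$ edges your six detours consume. What your approach buys is a cleaner explanation of \emph{why} the construction generalizes (a parity-preserving local surgery on one cycle edge); what the paper's buys is that the whole proof is a single closed-form list.

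The weakness is that the two constructions your argument actually rests on are promised rather than exhibited: you pin down only the two spoke paths for $m=5$ (the four winding paths are described but not listed), and you explicitly defer the ``Latin-square-style'' corridor rule. Both gaps are fillable, and for the corridor the rule is simple enough to state: replace the chord lift $(5,h)-(1,h')$ by the walk whose second coordinates alternate $h,h',h,h',\dots$ across fibers $5,6,\dots,m,1$; since $m-4$ is odd it ends at height $h'$, and at each rung the six alternating paths use the six distinct lifts of the three edges of $P_4$ (the two paths associated to the ordered pairs $(h,h')$ and $(h',h)$ take the two opposite lifts of $hh'$), so they partition the corridor's edges. With that rule and an explicit list of the six $m=5$ paths supplied, your proof is complete; as written, the odd case is a verified plan rather than a proof.
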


\begin{proof}
\underline{Case 1:} Let $m$ be even. In Case 2(iii) of Theorem~\ref{CmTimesCn} we did not use the full cycle in the second coordinate, so in fact this case proves that $\im(C_m\times P_4)=5$ for $m$ even and greater than or equal to $6$ (also illustrated in Figure~\ref{C2jcrossP4}).

\underline{Case 2:} Let $m$ be odd. Label the vertices of the cycle $1, 2, \ldots, m$ clockwise around the cycle and label consecutive vertices on the path $1, 2, 3, 4$. We choose  $(1, 2), (1, 3), (2, 2), (2, 3),$ and $(3, 2)$ as the terminals of our $K_5$-immersion. We then use the following edges and paths to complete the immersion.

$(1, 2) - (m, 3) - (m-1, 4) - (m-2, 3) - (m-3, 4) - \cdots - (1, 3)$

$(1, 2) - (m, 1) - (m-1, 2) - (m-2, 1) - (m-3, 2) - \cdots - (3, 1) - (2, 2)$

$(1, 2) - (2, 3)$

$(1, 2) - (2, 1) - (3, 2)$

$(1, 3) - (2, 2)$

$(1, 3) - (m, 4) - (m-1, 3) - (m-2, 4) - (m-3, 3) - \cdots - (2, 3)$

$(1, 3) - (m, 2) - (m-1, 3) - (m-2, 2) - (m-3, 3) - \cdots - (3, 2)$

$(2, 2) - (3, 3) - (4, 2) - (5, 3) - \cdots - (m, 3) - (1, 4) - (2, 3)$

$(2, 2) - (1, 1) - (m, 2) - (m-1, 1) - (m-2, 2) - \cdots - (3, 2)$

$(2, 3) - (3, 2)$

Therefore $\im(C_m\times P_4)=5$ for $m\geq 5$.
\end{proof}

\subsection{Limitations of proof techniques}\label{limitations} Using our proof techniques we cannot generalize the result of Theorem~\ref{GTimesParity} to an immersion in $H$ in which all paths have an odd number of pegs. We use the example in Figure~\ref{BipartiteEx} to illustrate this. Graphs $G$ and $H$ each have an immersion of $K_4$ and every path in $H$'s $K_4$-immersion has an odd number of pegs. Conjecture~\ref{Direct} predicts $G\times H$ will have an immersion of $K_{10}$. Given the degree of each vertex in $G\times H$ we see that the potential terminals for the $K_{10}$-immersion are
$$(1, 1), (1, 2), (1, 3), (1, 4), (2, 1), (2, 2), (2, 3), (2, 4),$$ $$(3, 1), (3, 2), (3, 3), (3, 4), (4, 1), (4, 2), (4, 3), (4, 4).$$
Since $G$ and $H$ are both bipartite graphs the product $G\times H$ has two connected components. The potential terminals are separated in different components: $(1, 1)$, $(1, 2)$, $(1, 3)$, $(1,4)$ are in one component, and all of the other potential terminals are in the other component. This means an immersion of $K_{10}$ would have to be in the component where $1$ is not in the first coordinate. Our proof techniques would have us choose one vertex and all of its ``neighbors," in the sense of $K_4\times K_4$, but this is not possible because none of the potential terminals have all of their ``neighbors" in the same component. Thus, when the paths in the immersions have different parities of paths we cannot use our proof techniques. However, the example in Figure~\ref{BipartiteEx} is not a counterexample to Conjecture~\ref{Direct} as we can find an immersion of $K_{10}$ using vertices $$(2, 1), (2, 2), (2, 3), (2, 4), (3, 1), (3, 3), (3, 4),  (4, 1), (4, 3), (4, 4)$$ as our terminals.

\begin{figure}
\centering
\includegraphics[width=.6\textwidth]{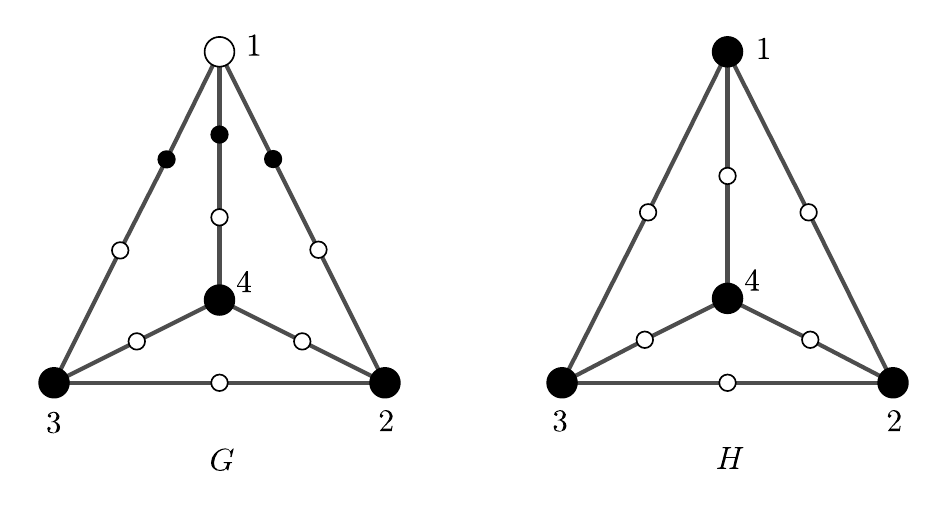}
\caption{Graphs $G$ and $H$ are bipartite graphs (with vertices colored black and while to show bipartitions), hence the product $G \times H$ has two connected components. Each of $G$ and $H$ has a $K_4$-immersion with terminals labeled $1, 2, 3, 4$ in the figure.
}
\label{BipartiteEx}
\end{figure}

\section{Final remarks}\label{final}

The last remaining product is the strong product. Recall that the edge set of the strong product is $E(G\Box H)\cup E(G\times H)$. Therefore, $K_t\boxtimes K_r=K_{tr}$ and $\im(K_t\boxtimes K_r)=tr$. This leads us to make the following conjecture about the immersion number of the strong product of two graphs.

\begin{conjecture}\label{boxtimes} Let $G$ and $H$ be graphs with $\im(G)=t$ and $\im(H)=r$ then $\im(G\boxtimes H)\geq tr$.\end{conjecture}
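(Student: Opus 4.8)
\textbf{Proposed line of attack for Conjecture~\ref{boxtimes}.} The plan is to combine, and then patch together, the constructions behind Theorems~\ref{box} and~\ref{KtTimesKr} (and, in general, Conjecture~\ref{direct}), using the extra edges of the strong product to route what neither factor product alone can reach. Fix a $K_t$-immersion in $G$ with terminals $v_1,\dots,v_t$ and pairwise edge-disjoint paths $\{P^G_{v_i,v_k}\}$, and a $K_r$-immersion in $H$ with terminals $u_1,\dots,u_r$ and pairwise edge-disjoint paths $\{P^H_{u_j,u_l}\}$. Since $K_t\boxtimes K_r=K_{tr}$, the terminal-selection heuristic used throughout the paper --- a vertex together with all of its neighbours --- here picks out \emph{every} pair $(v_i,u_j)$, so I would take the $tr$ vertices $\{(v_i,u_j)\;:\;1\le i\le t,\ 1\le j\le r\}$ as the terminals of the sought $K_{tr}$-immersion in $G\boxtimes H$, and must then produce $\binom{tr}{2}$ pairwise edge-disjoint connecting paths. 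Note that the case in which $G$ and $H$ contain $K_t$ and $K_r$ as subgraphs is immediate, since then $G\boxtimes H\supseteq K_t\boxtimes K_r=K_{tr}$.

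The organising observation is that $E(G\boxtimes H)$ is the union of the \emph{disjoint} sets $E(G\Box H)$ (Cartesian-type edges) and $E(G\times H)$ (direct-type edges). I would split the $tr$ terminals into the ``cross'' $\{(v_i,u_1):1\le i\le t\}\cup\{(v_1,u_j):1\le j\le r\}$ (these $t+r-1$ vertices meet in $(v_1,u_1)$) and the ``block'' $\{(v_i,u_j):2\le i\le t,\ 2\le j\le r\}$. All pairs inside the cross can be joined by the $K_{t+r-1}$-immersion of Theorem~\ref{box}, which uses only Cartesian-type edges; all pairs inside the block together with $(v_1,u_1)$ can be joined by the $K_{(t-1)(r-1)+1}$-immersion behind Theorem~\ref{KtTimesKr} (and, in general, Conjecture~\ref{direct}), which uses only direct-type edges --- and these two families are automatically edge-disjoint. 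What is left is to connect each of the $t+r-2$ cross terminals other than $(v_1,u_1)$ to each of the $(t-1)(r-1)$ block terminals using only edges not yet consumed. For such a pair, say $(v_i,u_1)$ and $(v_k,u_l)$ with $i\neq k$ and $l\geq 2$, I would walk simultaneously along $P^G_{v_i,v_k}$ and $P^H_{u_1,u_l}$ using the diagonal (direct-type) edges of the strong product, padding the shorter of the two with ``wait'' steps along Cartesian-type edges inside a single layer. The crucial feature here, absent in the pure direct product, is that such Cartesian wait-steps are available \emph{regardless of parity}: the obstruction that forced the extra hypotheses of Theorems~\ref{DirectParity} and~\ref{GTimesParity} simply does not arise, which is the reason to hope that $\boxtimes$ may be more tractable than $\times$.

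The hard part, and the step I expect to dominate the write-up, is the global edge-disjointness bookkeeping for these cross routes. All routes whose first coordinates trace the same path $P^G_{v_i,v_k}$ --- including diagonal steps used by the block's direct immersion over that path --- must be assigned distinct lifts of each edge of $P^G_{v_i,v_k}$, layer by layer; separating them should call for a device like the proper edge-colouring of $K_{r,r}$ in the proof of Theorem~\ref{lex} or the even/odd clique assignments in the proof of Theorem~\ref{KtTimesKr}. Worse, the wait-steps live inside individual copies of $G$ and of $H$, where they may collide both with one another and with the Cartesian immersion of the cross, so one must also prescribe --- consistently over all routes --- \emph{where} along the shorter path each wait is inserted, plausibly driven by an auxiliary edge-colouring or by an idempotent Latin square as in Case~3 of Theorem~\ref{GtimesKr}. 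Accordingly, I would approach the conjecture in the same order as Section~\ref{Direct}: first the case in which one factor contains the relevant complete subgraph (so one of the two traced paths is trivial and no padding is needed), then the case in which one factor admits a $K_r$-immersion whose paths all have the same parity, and only afterwards the fully general routing above; and in full generality this route takes Conjecture~\ref{direct} as a black box, so a completely unconditional proof would either need Conjecture~\ref{direct} first or a from-scratch staircase argument in place of the block immersion.
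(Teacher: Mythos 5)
This statement is a \emph{conjecture} in the paper: the authors prove nothing beyond the observation that $K_t\boxtimes K_r=K_{tr}$ (so the bound is tight for complete graphs) and the remark that they expect the conjecture to follow once Conjecture~\ref{direct} is resolved. So there is no paper proof to measure you against, and your proposal --- explicitly a plan rather than a proof --- does not close the conjecture either. Your decomposition of the $tr$ terminals into the ``cross'' and the ``block'' is combinatorially sound (the three families of pairs do partition all $\binom{tr}{2}$ pairs), and the observation that $E(G\Box H)$ and $E(G\times H)$ partition $E(G\boxtimes H)$, so that the cross-internal and block-internal routings are automatically edge-disjoint, is correct and is a genuinely useful structural remark. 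Your point that Cartesian ``wait'' steps should dissolve the parity obstruction of Section~\ref{PegParity} is also the right intuition for why $\boxtimes$ ought to be easier than $\times$.

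But as a proof there are two genuine gaps, both of which you partly acknowledge. First, the block routing in general graphs \emph{is} Conjecture~\ref{direct}, which the paper leaves open, so at best you obtain a conditional statement (or an unconditional one only in the special cases of Theorems~\ref{KtTimesKr}--\ref{GTimesParity}). Second, the cross-to-block routing is not just ``bookkeeping to be dominated by the write-up'': your generic recipe does not even cover all the pairs, and in the degenerate cases it visibly collides with what you have already used. Concretely, take the pair $(v_i,u_1)$ (cross) and $(v_i,u_l)$ (block) with $i,l\ge 2$: the natural route stays in the $v_i$-copy of $H$ and traces $P^H_{u_1,u_l}$, but those exact Cartesian edges are already consumed by the Theorem~\ref{box} route joining $(v_i,u_1)$ to $(v_1,u_l)$ inside the cross. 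So the degenerate pairs force direct-type detours, and the generic pairs force an assignment of the (tightly counted) lifts of each $G$-edge among up to $r^2$ competing routes; neither is supplied. Until that assignment is written down --- or a counterexample rules it out --- the proposal is a plausible strategy consistent with the authors' stated expectations, not a proof.
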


We believe Conjecture~\ref{boxtimes} will be resolved once the remaining cases for Conjecture~\ref{direct} are resolved.

In this paper we have completely resolved Question~\ref{ques} for the lexicographic and Cartesian products. We have provided much evidence that the answer will be yes for the direct product as well, but will need different proof techniques to fully resolve the direct product. For each product we were able to find examples where we could do better than the bound of $\im(K_t*K_r)$. For this reason, future work should include exploring the following question.

\begin{quest}\label{Nextques} Let $G$ and $H$ be graphs with $im(G)=t$ and $im(H)=r$. For each of the four standard graph products $G*H$, how large is $\im(G*H)$?
\end{quest}


\begin{thebibliography}{00}

\bibitem{AKL} F. Abu-Khzam and M. Langston,
Graph coloring and the immersion order.
\emph{Lecture Notes in Computer Science} \textbf{2697} (2003), 394-403.

\bibitem{C} P. A. Catlin,
Haj\'os' graph-coloring conjecture: variations and counterexamples.
\emph{J. Comb. Th. Ser. B} \textbf{26} (1979), 268-274.

\bibitem{ChSi07} S. L. Chandran and N. Sivadasan,
On the Hadwiger's conjecture for graph products.
\emph{Discrete Math.} \textbf{307(2)} (2007), 266-273.

\bibitem{CDKS} M. Chudnovsky, Z. Dvo\v{r}\'{a}k,  T. Klimo\v sov\'a  and P. Seymour,
Immersion in four-edge-connected graphs.
\emph{J. Comb. Th. Ser. B}  \textbf{116}  (2016), 208-218.

\bibitem{CH} K. L. Collins and M. E. Heenehan,
Constructing graphs with no immersion of large complete graphs.
\emph{J. Graph Th.} \textbf{77(1)} (2014), 1-18.

\bibitem{DDFMMS} M. DeVos, Z. Dvo\v{r}\'{a}k, J. Fox, J. McDonald, B. Mohar and D. Scheide, A minimum degree condition forcing complete graph immersion. \emph{Combinatorica} \textbf{34} (2014), 279-298.

%\bibitem{DKMO}
%M. DeVos, K. Kawarabayashi, B. Mohar, and H. Okamura,
%Immersing small complete graphs,
%{\it Ars Mathematics Contemporanea} {\bf 3} (2010), 139-146.

\bibitem{DW} Z. Dvo\v{r}\'{a}k and P. Wollan,
A structure theorem for strong immersions.
\emph{J. Graph Th.} \textbf{83(2)} (2016) 152-163.

\bibitem{DY} Z. Dvo\v{r}\'{a}k and L. Yepremyan,
Complete graph immersions and minimum degree.
\emph{J. Graph Th.} \textbf{88(1)} (2018) 211-221.

\bibitem{hadwiger} H. Hadwiger,
\"Uber eine Klassifikation der Streckenkomplexe.
{\it Vierteljschr. Naturforsch. Ges. Z\"urich} {\bf 88} (1943), 133-142.

\bibitem{Haj} G. Haj\'os,
\"Uber eine Konstruktion nicht $n$-f\"arbbarerGraphen.
\emph{Wiss. Z. Martin-Luther Univ. Halle-Wittenberg Math.-Nat. Reihe} \textbf{10} (1961), 116-117.

%\bibitem{Hall} P. Hall, On Representatives of Subsets. \emph{J. London Math. Soc.} \textbf{10} (1935), 26–-30.

\bibitem{HIK} R. Hammack, W. Imrich, and S. Klav\v{z}ar,
\emph{Handbook of Product Graphs}. CRC Press Taylor \& Francis Group, LLC, Boca Raton, FL, 2011.

%\bibitem{Hed} S. Hedetniemi, Homomorphisms of graphs and automata, \emph{Univ. of Michigan Technical Report} 03105-44-T (1966).

\bibitem{KK} N. Kakimura and  K. Kawarabayashi,
Coloring immersion-free graphs.
\emph{J. Comb. Th. Ser. B} \textbf{121} (2016) 284-307.

\bibitem{ImKl00} W. Imrich and S. Klav\v zar, \emph{Product graphs: Structure and recognition}.  Wiley-Interscience Series in Discrete Mathematics and Optimization, Wiley-Interscience, New York, 2000 xvi+358 pp.

\bibitem{Ko01} A. Kotlov,
Minors and strong products.
\emph{European J. Combin.}
\textbf{22(4)} (2001),  511-512.

%\bibitem{My} J. Mycielski, Sur le coloriage des graphes, \emph{Colloq. Math.} \textbf{3} (1955) 161\^a-162.

\bibitem{RS23} N. Robertson and P. Seymour,
Graph Minors XXIII, Nash-Williams' immersion conjecture.
\emph{J. Comb. Th. Ser. B} \textbf{100} (2010), 181-205.

%\bibitem{RST1} N. Robertson, P. Seymour and R. Thomas,
%Hadwiger's conjecture for $K_6$-free graphs,
%{\it Combinatorica} {\bf 13} (1993), 279-361.

\bibitem{V17} S. Vergara,
Complete graph immersions in dense graphs.
\emph{Discrete Math.} \textbf{340(5)} (2017) 1019-1027.

%\bibitem{V2} V. Vizing, On an estimate of the chromatic class of a $p$-graph, \emph{Diskret. Analiz} \textbf{3} (1964), 25-30.

\bibitem{Wa} D. Wagner,
A note on immersion minors and planarity.
 \emph{Discrete Math.} \textbf{341(6)} (2018) 1605-1612.

\bibitem{Wo} P. Wollan,
The stucture of graphs not admitting a fixed immersion.
\emph{J. Combin. Th. Ser. B} \textbf{110} (2015), 47-66.

%\bibitem{WoWo} P. Wollan and D. Wood, Nonrepetitive colorings of graphs excluding a fixed immersion or topological minor, \emph{J. Graph Th.} \textbf{91(3)} (2019), 259 - 266.

\end{thebibliography}
\end{document}